%%%%%%%%%%%%%%%%%%%%%%%%%%%%%%%%%%%%%%%%
%%%%%%%%%% MAIN DOCUMENT FILE %%%%%%%%%%
%%%%%%%%%%%%%%%%%%%%%%%%%%%%%%%%%%%%%%%%

%% Document type %%
\documentclass[12pt,a4paper]{article}

%%%%%%%%%%%%%%%%%%%%%%%%%%%%%%%%%%%
%%%%% COMMANDS, PACKAGES, ... %%%%%
%%%%%%%%%%%%%%%%%%%%%%%%%%%%%%%%%%%

%%%%%%%%%%%%%%%%
%%% Packages %%%
%%%%%%%%%%%%%%%%
%% Language & Encoding %%
\usepackage[english]{babel}
\usepackage[T1]{fontenc}
\usepackage[utf8]{inputenc}

%% Styling %%
\usepackage{bm}
\usepackage{lmodern}
\usepackage[normalem]{ulem}

%% Layout %%
\usepackage[top=2.25cm, bottom=2.75cm, left=2.75cm, right=2.75cm]{geometry}
\usepackage{latexsym}
\usepackage{pdflscape}
\usepackage{fancyhdr}
\usepackage{setspace}

%% Graphics %%
\usepackage{graphicx}
\usepackage[dvipsnames]{xcolor}
\usepackage{epstopdf}
\usepackage{epsf}
\usepackage{eepic}
\usepackage{tikz}
\usetikzlibrary{shadings,shapes,arrows,calc,positioning,shadings,decorations.markings, patterns}

%% Bibliography %%
\usepackage[numbers]{natbib}

%% Maths %%
\usepackage{amsmath,amssymb,amsfonts,amscd}
%\usepackage{mathtools}
%\usepackage{subsupscript}

%% Blocks & Elements %%
\usepackage{enumitem}
\usepackage{caption}
\usepackage[format=hang,margin=10pt]{subcaption}
\usepackage{listings}
\usepackage{varwidth}
\usepackage{mathrsfs}

%% Environments %%
\usepackage{algorithm}
\usepackage{algpseudocode}
\usepackage[numbered,framed]{matlab-prettifier}
\usepackage{amsthm}
\usepackage[linguistics]{forest}
\forestset{
nice empty nodes/.style={
    for tree={l sep*=2},
    delay={where content={}{shape=coordinate}{}}
},
}

%% Control / Misc %%
\usepackage{changepage}
\usepackage{xifthen}
\usepackage{todonotes}

%% References %%
\usepackage{hyperref}
\usepackage{cleveref}

%%%%%%%%%%%%%%%%%%%%%%
%%% Layout / Style %%%
%%%%%%%%%%%%%%%%%%%%%%

%% Numbering %%
\numberwithin{equation}{section}
\setlist[itemize,1]{label=$\bullet$}
\setlist[itemize,2]{label=$\triangleleft$}
\setlist[enumerate,1]{label=(\roman*)}
\setlist[enumerate,2]{label=(\arabic*)}

%% Indent %%
\setlength{\parindent}{0cm}

%% Color %%
\definecolor{TUIl-orange}{RGB}{255, 121, 0}
\definecolor{TUIl-titleblue}{RGB}{0, 68, 121}
\definecolor{TUIl-textblue}{RGB}{0, 51, 88}
\definecolor{TUIl-green}{RGB}{0, 116, 122}
\definecolor{TUIl-grey}{RGB}{165, 165, 165}

%% Hyperref %%
\hypersetup{
    breaklinks=true,
    unicode=true,
    pdfpagelayout=OneColumn,
    bookmarksnumbered=true,
    bookmarksopen=true,
    bookmarksopenlevel=0,
    pdfborder={0 0 0},  
    colorlinks=true,
    linkcolor=Cerulean,    
    citecolor=Cerulean,
    %menucolor=Cerulean,
    %urlcolor=cyan,
}

%%%%%%%%%%%%%%%%%%%%
%%% Environments %%%
%%%%%%%%%%%%%%%%%%%%

%% Algorithms %%
\algnewcommand\algorithmicinput{\textbf{Input:}}
\algnewcommand\AlgInput{\item[\algorithmicinput]}
\algnewcommand\algorithmicoutput{\textbf{Output:}}
\algnewcommand\AlgOutput{\item[\algorithmicoutput]}

%% MATLAB includes %%

\lstMakeShortInline"
\lstset{
  style              = Matlab-editor,
  basicstyle         = \mlttfamily,
  escapechar         = ",
  mlshowsectionrules = true,
  xleftmargin		 = 4em,
}

%% Theorems %%
\newtheoremstyle{dotless}{}{}{\itshape}{}{\bfseries}{}{ }{}
\newtheoremstyle{no-italic}{}{}{}{}{\bfseries}{}{ }{}
\theoremstyle{dotless}
\newtheorem{Theorem}{Theorem}[section]
\newtheorem{Example}[Theorem]{Example}
\newtheorem{Lemma}[Theorem]{Lemma}
\newtheorem{Definition}[Theorem]{Definition}
\newtheorem{Assumption}{Assumption}
\newtheorem{Remark}[Theorem]{Remark}
\newtheorem{Proposition}[Theorem]{Proposition}
\newtheorem{Corollary}[Theorem]{Corollary}
\newtheorem{Test Instance}[Theorem]{Test Instance}
\theoremstyle{no-italic}

%%%%%%%%%%%%%%%%
%%% Commands %%%
%%%%%%%%%%%%%%%%

%% Special formatting %%
				% repeated definitions
				% definitions
	% code

%% Number spaces %%

%% Norm %%
	% norm
	% absolute value

%% Math operators / text in math mode %%
			% closure
		% interior
		% diagonal
	% argmax
	% argmax

%% Sets %%

					% power set
  					% empty set
     				% setminus
					% efficient set
					% nondominated set
					% approximation

%% Mappings %%

%% Easier commands %%

%% My commands %%
\setlength {\marginparwidth }{2cm}
\usepackage{tikz}

%\def\Max{\mathop{\rm Max}}

%\def\sup{\textup{sup }}

%% Math operators / text in math mode %%
%\DeclareMathOperator{\cl}{cl}			% closure
%\DeclareMathOperator{\inner}{int}		% interior
%\DeclareMathOperator{\diag}{diag}		% diagonal
%\DeclareMathOperator{\argmax}{argmax}	% argmax
%\DeclareMathOperator{\argmin}{argmin}	% argmin

%%%%%%%%%%%%%%%%%%
%%%%% HEADER %%%%%
%%%%%%%%%%%%%%%%%%
\title{A Unified Characterization of Nonlinear Scalarizing Functionals in Optimization}
\author{Gemayqzel Bouza  \thanks{Faculty of Mathematics and Computer Science, University of Havana, 10400  Havana, Cuba,
{\texttt{gema@matcom.uh.cu}}} \and Ernest Quintana \thanks{Institute for Mathematics, Technische Universität Ilmenau, 98693 Ilmenau, Germany,
{\texttt{ernest.quintana-aparicio@tu-ilmenau.de}}} \and Christiane Tammer \thanks{Institute of Mathematics, Martin-Luther-Universität Halle-Wittenberg, 06126 Halle, Germany, {\texttt{christiane.tammer@mathematik.uni-halle.de}}}}
\date{}

%%%%%%%%%%%%%%%%%%%%%%%%%
%%%%% DOCUMENT PART %%%%%
%%%%%%%%%%%%%%%%%%%%%%%%%
\begin{document}

%%%%%%%%%%%%%%%%
%% Title page %%
%%%%%%%%%%%%%%%%
\maketitle

%%%%%%%%%%%%%%
%% Abstract %%
%%%%%%%%%%%%%%
\begin{abstract}
Over the years, several classes of scalarization techniques in optimization have been introduced and employed in deriving separation theorems, optimality conditions and algorithms. In this paper, we study the relationships between some of those classes in the sense of inclusion. We focus on three types of scalarizing functionals defined by Hiriart-Urruty, Drummond and Svaiter, Gerstewitz. We completely determine their relationships. In particular, it is shown that the class of the functionals by Gerstewitz is minimal in this sense. Furthermore, we define a new (and larger) class of scalarizing functionals that are not necessarily convex, but rather quasidifferentiable and positively homogeneous. We show that our results are connected with some of the set relations in set optimization.
\end{abstract}

%%%%%%%%%%%%%%%%%%%%%%%%%%%%%%%%%%
%% Key words and classification %%
%%%%%%%%%%%%%%%%%%%%%%%%%%%%%%%%%%
\noindent {\small\textbf{Key Words:} set optimization, robust vector optimization, descent method, stationary point}

\vspace{2ex} \noindent {\small\textbf{Mathematics subject
classifications (MSC 2010):}}  	90C26, 90C29, 90C30, 90C48 	

%%%%%%%%%%%%%
%% Content %%
%%%%%%%%%%%%%

\section{Introduction}
\label{intro}

Scalarization methods are a fundamental concept in optimization theory. Indeed, both necessary and sufficient optimality conditions and algorithms for solving vector optimization problems can be derived with them. Furthermore, scalarizing functionals play an important role in functional analysis, risk theory and mathematical finance (see Chapter 15 in the book \cite{17} and references therein). 

In the literature, several classes of scalarization methods for vector optimization problems have been defined, see for example \cite{3,6,7,10,31}. In the first part of this paper, we will provide a unified characterization of these classes by studying their relationships in the sense of inclusion.

In \cite{29,12,10} (see also \cite{17}), an axiomatic approach to scalarization in vector optimization was introduced.  These axioms are those of monotonicity and order representability (see Definition \ref{d-axioms}). It was shown that they were necessary and sufficient for  characterizing the solution sets of a vector optimization problem, namely, those of (properly, weakly)minimal. Although the axioms are stated, it is not clear how to generate all the functionals satisfying them. In a second part of this paper we propose a partial solution to this problem. Since it is well known that the class of quasidifferentiable functions is very large (see \cite{11}), we derive necessary and sufficient conditions on the quasidifferential of a positively homogeneous functional at $0,$ under which those axioms are satisfied. 

We will consider a normed space $(Y,\|\cdot\|)$ and denote its dual by $(Y^*, \|\cdot\|_*)$. The symbol $\langle \cdot, \cdot \rangle$ represents the dual pairing. Furhtermore, we use $y^*$ when referring to a generic element of the dual space. For any subset $G$ of $K^*,$ we denote by $\sigma_G$ the support functional of $G$, that is, 

\begin{equation*}
\sigma_G(y):= \sup_{y^*\in G}\{\langle y^*,y\rangle \}.
\end{equation*}

The ball in the dual space centered at $\bar{y}^*$ and with radius $\epsilon>0$ will be denoted as $\Bbb B(y^*,\epsilon),$ i.e, 
$$\Bbb B(y^*,\epsilon):=\{y^* \in Y^*: \|y^*-\bar{y}^*\|_*\leq  \epsilon\}.$$ For a set $S\subseteq Y,$  the sets $\operatorname{int}(S)$ and $\overline{S}$ are just the interior and closure of $S$ respectively. In the case of a set $D\subseteq Y^*,$ $\operatorname{int}(D)$ is the topological interior of $D$ and $\overline{D}^*$ its $w^*-$closure. For the definition and properties of the $w^*-$ topology, see the book \cite{18}. 

The rest of the paper is organized as follows: In Section \ref{s-prel}, some important concepts that will be used in the forthcoming sections are recalled. In Section \ref{s-3}, for a given convex cone in the primal space represented as the $0$-level set of a continuous functional, we study the problem of finding a representation of its dual cone. In Section \ref{s-4}, applying the results of the previous section, the relationships between the classes of scalarizing functionals by Gerstewitz, Huriart- Urruty and Drummond-Svaiter are discussed. In Section \ref{s-5}, based on the results of the previous section, we introduce a class of quasidifferentiable scalarization functionals and show that this class is indeed larger than what has been considered previously. Section \ref{s-6} states some conclusions and remarks. 

\section{Preliminaries}\label{s-prel}

In this section, we state some basic definitions and results that will be used in the paper. We start with the concept of a cone and its dual.

\begin{Definition}

Let $X$ be a topological vector space. Then, 

\begin{enumerate}
\setlength\itemsep{1em}
\item A subset $C$ of $X$ is called a cone if $x\in C \Longrightarrow tx\in C$ for all $t\geq 0.$ A cone $C$ is convex if $C+C\subseteq C.$ Furthermore, we say that $C$ is nontrivial if $C\neq\emptyset,\; C\neq \{0\}$ and $C\neq X.$

\item A cone $C$ is said to be normal, if there exists a neighborhood base $\{V_\alpha\}_{\alpha \in T}$ of $0\in X$ such that 

$$\forall\; \alpha \in T:\;  V_\alpha= \left(V_\alpha+C\right) \cap \left(V_\alpha-C\right).$$

\item For a given cone $C\subseteq Y,$ the set

$$C^*:=\{y^* \in Y^*: \langle y^*,y \rangle\geq 0 \textrm{ for all } y\in C \}$$ is called the dual cone of $C.$

\item The set 

$$C^{s*}:=\{y^*\in Y^*: \langle y^*,y \rangle> 0 \textrm{ for all } y\in C\setminus\{0\}\}$$ is called the quasinterior of $C^*.$ 

\end{enumerate}

\end{Definition} It is well known that $C^*$ is a $w^*-$ closed and convex cone, see \cite{18}. Another important concept that we will use is that of generators of convex cones.

\begin{Definition}\label{d-generator}

Let $X$ be a topological vector space and consider a convex cone $C\subseteq X.$ We say that a set $G\subseteq X$ is a generator of $C$ if it satisfies the following properties:

\begin{enumerate}
\setlength\itemsep{1em}
\item $G$ is convex,

\item $0\notin \bar{G},$ 

\item $C=\operatorname{cone}(G).$

\end{enumerate} Here,
$$\operatorname{cone}(G):=\{tx|\; t\geq 0,\; x\in G\}$$ is read as the cone generated by $G.$  Furthermore, if $G$ is a generator of $C$ such that for every $x \in C\setminus\{0\}$ the representation 

$$x= \lambda g,\; \lambda >0,\; g\in G$$ is unique, we say that $G$ is a (topological) base of $C.$

\end{Definition}

Most of our results will heavily rely on the classical separation theorems in a locally convex topological vector space, see \cite{9}. We state them for completeness.

\begin{Theorem}(Jahn, \cite{9})\label{t - separationtheorem}
Let X be a topological vector space and $A, B\subseteq X$ be closed and convex. The following statements holds:

\begin{enumerate}
\setlength\itemsep{1em}

\item If $\operatorname{int}(A)\neq \emptyset,$ then $\operatorname{int}(A)\cap B=\emptyset$ if and only if we can find $x^*\in X^*\setminus\{0\}$ such that 

$$\sup\{\langle x^*, a\rangle : a\in A \}\leq  \inf\{\langle x^*, b\rangle : b\in B\}.$$

\item If $X$ is locally convex and $A$ is compact, then $A\cap B=\emptyset$ if and only if we can find $x^*\in X^*\setminus\{0\}$ such that 

$$\sup\{\langle x^*, a\rangle : a\in A \}< \inf\{\langle x^*, b\rangle : b\in B\}.$$

\end{enumerate}   

\end{Theorem}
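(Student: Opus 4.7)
The plan is to reduce both parts to the elementary geometric form of the Hahn-Banach theorem, namely the statement that a nonempty open convex set which misses another convex set admits a separating nonzero continuous linear functional. In both parts, the ``if'' direction is routine: a separating $x^*$ automatically places $A$ and $B$ (respectively, $\operatorname{int}(A)$ and $B$) into disjoint closed/open half-spaces, so no element of $B$ can lie in $\operatorname{int}(A)$ in (i), and $A\cap B=\es$ in (ii). Thus the content lies entirely in the ``only if'' directions.

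For (i), I would apply the elementary separation principle directly to $U:=\operatorname{int}(A)$ and $B$, which are convex, with $U$ nonempty and open, and $U\cap B=\es$ by hypothesis. This produces $x^*\in X^*\setminus\{0\}$ and $\alpha\in\R$ with $\langle x^*,u\rangle<\alpha\leq\langle x^*,b\rangle$ for all $u\in U$ and $b\in B$. Invoking the standard fact that $A=\overline{\operatorname{int}(A)}$ whenever $A$ is closed and convex with nonempty interior, continuity of $x^*$ passes the inequality to the closure, yielding $\sup_{a\in A}\langle x^*,a\rangle\leq\alpha\leq\inf_{b\in B}\langle x^*,b\rangle$.

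For (ii), the plan is to bootstrap part (i). First I would use the standard fact that in any topological vector space the sum of a compact set and a closed set is closed, so $A-B$ is closed; by hypothesis, $0\notin A-B$. Local convexity of $X$ then supplies an open convex neighborhood $V$ of $0$ with $V\cap(A-B)=\es$. Applying part (i) to the pair $(V, A-B)$ gives $x^*\in X^*\setminus\{0\}$ with $\sup_{v\in V}\langle x^*,v\rangle\leq\inf_{c\in A-B}\langle x^*,c\rangle$. Since $V$ is an open neighborhood of $0$ and $x^*\neq 0$, the left-hand side is strictly positive; combined with the identity $\inf_{a\in A,\, b\in B}\bigl(\langle x^*,a\rangle-\langle x^*,b\rangle\bigr)=\inf_{a\in A}\langle x^*,a\rangle-\sup_{b\in B}\langle x^*,b\rangle$, this rearranges to the claimed strict separation, with compactness of $A$ ensuring that the infimum over $A$ is attained.

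The main obstacle is not any single hard step but rather the careful invocation of the correct topological preconditions: ensuring that $A-B$ is closed under compactness of $A$ in (ii), the density identity $\overline{\operatorname{int}(A)}=A$ in (i), and the passage from a weak to a strict inequality via the open neighborhood $V$. Modulo these textbook facts, the argument is a standard two-step application of the geometric Hahn-Banach theorem.
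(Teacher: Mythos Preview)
The paper does not supply its own proof of this theorem: it is quoted as a known result from Jahn's book (reference \cite{9}) and is used throughout as a black box. So there is nothing in the paper to compare your argument against.

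Your sketch is essentially the standard textbook proof and is correct in outline. Two small cosmetic points are worth tightening. First, in part~(ii) you invoke part~(i) on the pair $(V,A-B)$, but $V$ is open, not closed, so the hypotheses of~(i) as stated are not literally met; the clean fix is simply to apply the elementary Hahn--Banach separation (your declared base tool) directly to the open convex set $V$ and the convex set $A-B$. Second, the inequality you actually obtain after separating $V$ from $A-B$ is $\inf_{a\in A}\langle x^*,a\rangle>\sup_{b\in B}\langle x^*,b\rangle$, which is the desired conclusion for $-x^*$ rather than $x^*$; a sign flip at the end gives the statement in the form written. Neither point affects the validity of the argument.
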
 We will mainly consider Definition \ref{d-generator} and Theorem \ref{t - separationtheorem} in the context in which a normed space $Y$ with a convex cone $K$ are given. In that case, we will put $X=Y^*$ with the $w^*-$ topology and $C=K^*$. Furthermore, we will assume that $G$ is $w^*-$ compact.\\

For the rest of the section, we assume that $Y$ is a normed space and $K\subseteq Y$ is a convex cone. Now, consider an arbitrary functional $\Psi: Y \to \mathbb{R}.$ Several important properties related to this functional are involved for defining a robust representation of $C\subseteq Y$ in the next definition. 

\begin{Definition}\label{d-rob}
Let $\Psi: Y \to \mathbb{R}$ and $C\subseteq Y$ be such that $\operatorname{int}(C)\neq \emptyset.$ We say that $\Psi$ gives a robust representation of the set $C$ if the following properties hold:
\begin{enumerate}
\setlength\itemsep{1em}
\renewcommand{\labelitemi}{\scriptsize$\blacksquare$} 
\item $\Psi$ is continuous,
\item $C=\{y\in Y: \Psi(y)\leq 0\},$
\item $\operatorname{int}(C)=\{y\in Y: \Psi(y)< 0\}.$
\end{enumerate}
\end{Definition} 

The directional derivative of $\Psi$ at $\bar{y}$ in the direction $v\in Y,$ denoted as $\Psi'(\bar{y},v),$ is defined as

$$\Psi'(\bar{y},v):= \lim_{t\downarrow 0} \frac{\Psi(\bar{y}+tv )-\Psi(\bar{y})}{t},$$ whenever this limit exists. The functional $\Psi$ is called directionally differentiable at $\bar{y} $ if the above limit exists in every direction $v \in Y.$ Furthermore, $\Psi$ is called G\^ateaux differentiable at $\bar{y}$ if $\Psi'(\bar{y},\cdot)\in Y^*.$ 

If $\Psi$ is directionally differentiable at $\bar{y}$, the set 

$$\partial_{DH}\Psi(0):=\{y^* \in Y^*: \langle y^*,y \rangle\leq \Psi'(\bar{y},y)\textrm{ for all } y\in Y \}$$ is called the Dini-Hadamard subdifferential of $\Psi$ at $\bar{y}.$

We will mostly deal with convex functions. If $\Psi: Y\to \Bbb R$ is a convex functional and $\bar{y}\in Y$, the set 

$$\partial \Psi(\bar{y}):=\{y^* \in Y^*: \langle y^*,y-\bar{y} \rangle\leq \Psi(y)-\Psi(\bar{y}) \textrm{ for all } y\in Y\}$$ is called the (Fenchel) subdifferential of $\Psi$ at $\bar{y}.$

\begin{Definition}
Let $\Psi: Y \to \mathbb{R}.$ We say that $\Psi$ satisfies the Slater condition if there exists $\bar{y}\in Y$ such that $\Psi(\bar{y})<0.$

\end{Definition} 

\begin{Remark}\label{slaterequalrobustness}
It can be shown that, if $\Psi$ is a continuous convex functional, then Slater's condition is equivalent to the robustness of $\Psi$ in the sense of Definition \ref{d-rob}. Indeed, that robustness implies Slater's condition is trivial. For proving the converse, assume that $\Psi$ satisfies Slater's condition and let $\bar{y}$ be such that $\Psi(\bar{y})<0.$  Take any $y\in \operatorname{int}(C).$ Then, there exists $\lambda >1$ such that $y_\lambda= \bar{y}+\lambda(y- \bar{y})\in \operatorname{int}(C).$ Hence, we find $\Psi(y)=\Psi\left(\frac{1}{\lambda}y_\lambda+\left(1-\frac{1}{\lambda}\right)\bar{y}\right)\leq \frac{1}{\lambda} \Psi(y_\lambda)+ \left(1-\frac{1}{\lambda}\right) \Psi(\bar{y})<0.$  
\end{Remark}

Sublinear functionals are an important subclass of convex functions. Recall that $\Psi:Y\to \mathbb{R}$ is said to be sublinear if it is
\begin{itemize}
\setlength\itemsep{1em}
\renewcommand{\labelitemi}{\scriptsize$\blacksquare$} 

\item positively homogeneous, i.e, $\Psi(t y)=t\Psi(y),$ for all $y\in Y, t\geq 0,$
\item subadditive, i.e,  $ \Psi(y_1+y_2)\leq \Psi(y_1) +\Psi(y_2),$ for all $y_1,y_2\in Y.$
\end{itemize} 

The following proposition summarizes some useful facts about convex functions and its subdifferentials.

\begin{Proposition}[Schirotzek, \cite{13}]\label{properties of convex}
Let $\Theta: Y \to \Bbb R$ be a convex functional, and let $\bar{y}\in Y.$ Assume that $\Theta$ is continuous at $\bar{y}.$ The following properties hold:

\begin{enumerate}
\setlength\itemsep{1em}
\renewcommand{\labelitemi}{\scriptsize$\blacksquare$} 
\item $\Theta$ is locally Lipshitz at each point $\bar{y}\in Y,$ i.e, there exists a neighborhood $\mathcal{U}$ of $\bar{y}$ and a constant $L >0$ such that
$$\forall\; x,y\in \mathcal{U}: \;|\Psi(x)-\Psi(y)|\leq L\|x-y\|.$$  
\item The subdifferential of $\Theta$ at $\bar{y}$ is nonempty, i.e, $\partial \Theta (\bar{y})\neq \emptyset.$ Furthermore, $\partial \Theta (\bar{y})$ is convex and $w^*-$compact,
\item Consider $\Psi: Y\to \Bbb R$ as $\Psi(y)=\Theta'(\bar{y},y).$ Then, $\Psi$ is sublinear and $\Psi = \sigma_{\partial \Theta (\bar{y})},$
\item $\partial \Psi(0)= \partial \Theta (\bar{y}).$
\end{enumerate} 
\end{Proposition}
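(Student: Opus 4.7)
The plan is to address the four items in order, each one feeding into the next. Throughout, I will rely on Theorem \ref{t - separationtheorem} for the separation arguments and on the fact that $\partial \Theta(\bar y)$ is defined as an intersection of $w^*$-closed halfspaces.

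For property (i), I would first use continuity of $\Theta$ at $\bar y$ to find a ball $B(\bar y, 2r)$ on which $\Theta$ is bounded above by some $M$. Convexity then forces a lower bound as well: for any $z \in B(\bar y, 2r)$, the point $2\bar y - z$ also lies in $B(\bar y, 2r)$, and writing $\bar y = \frac12 z + \frac12(2\bar y - z)$ gives $\Theta(z) \geq 2\Theta(\bar y) - M$. With $\Theta$ bounded on $B(\bar y, 2r)$, the standard three-slope argument (extending the segment from $x$ to $y$ past $y$ until it hits the boundary of $B(\bar y, 2r)$ and applying convexity) yields the Lipschitz estimate on $B(\bar y, r)$ with constant $L$ proportional to $(M - \Theta(\bar y))/r$.

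For property (ii), nonemptiness follows by separating $\epi \Theta$ from $\{(\bar y, \Theta(\bar y) - \varepsilon)\}$ via Theorem \ref{t - separationtheorem}(i): continuity of $\Theta$ guarantees $\operatorname{int}(\epi \Theta) \neq \emptyset$, and the separating functional can be normalized so that its $\R$-component is $-1$, producing a subgradient. Convexity of $\partial \Theta(\bar y)$ is immediate from its definition as an intersection of halfspaces, and $w^*$-closedness follows from the same observation. For $w^*$-compactness, I would combine this with a norm bound: taking any $y^* \in \partial \Theta(\bar y)$ and using $\langle y^*, y - \bar y\rangle \leq \Theta(y) - \Theta(\bar y) \leq L\|y - \bar y\|$ for $y$ near $\bar y$ yields $\|y^*\|_* \leq L$, whence Banach--Alaoglu applies.

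Property (iii) requires two substeps. Positive homogeneity of $\Psi(y) = \Theta'(\bar y, y)$ is a direct substitution $s = t/\lambda$ in the defining limit, and subadditivity comes from writing $\bar y + t(y_1 + y_2) = \tfrac12(\bar y + 2ty_1) + \tfrac12(\bar y + 2ty_2)$ and applying convexity before letting $t \downarrow 0$. For the identity $\Psi = \sigma_{\partial \Theta(\bar y)}$, one direction is trivial from the definitions of both the directional derivative (as an infimum of difference quotients, using the monotonicity of those quotients in $t$ guaranteed by convexity) and the subdifferential. For the reverse inequality, fix $y$ and apply the Hahn--Banach dominated-extension theorem to produce a linear functional majorized by the sublinear $\Psi$ and attaining $\Psi(y)$ at $y$; the monotonicity of difference quotients then guarantees that this functional lies in $\partial \Theta(\bar y)$. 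The main obstacle is this last step: one must verify continuity of the extending functional, which is where the local Lipschitz bound from (i) is essential, since it controls $\Psi$ on the unit sphere.

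Finally, property (iv) follows abstractly from (ii) and (iii). Since $\Psi$ is the support function of the nonempty, convex, $w^*$-compact set $\partial \Theta(\bar y)$ and satisfies $\Psi(0) = 0$, the inclusion $\partial \Theta(\bar y) \subseteq \partial \Psi(0)$ is immediate, while the reverse follows because any $y^* \in \partial \Psi(0)$ satisfies $\langle y^*, z\rangle \leq \sigma_{\partial \Theta(\bar y)}(z)$ for all $z \in Y$, and a standard separation argument in $(Y^*, w^*)$ applied to the $w^*$-closed convex set $\partial \Theta(\bar y)$ forces $y^*$ into it.
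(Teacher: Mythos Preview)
The paper does not actually prove this proposition: it is stated with attribution to Schirotzek \cite{13} and invoked as a known result, with no proof given in the paper itself. Your outline is the standard textbook argument (essentially what one finds in Schirotzek) and is correct as a plan; the only places that would need a sentence more of care in a full write-up are the verification that the separating functional in (ii) has nonzero $\R$-component before normalization, and the continuity of the Hahn--Banach extension in (iii), which you already flag and correctly tie to the Lipschitz bound from (i).
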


Next, we present an important type of directionally differentiable functions.

\begin{Definition}
The functional $\Psi:Y\to \Bbb R$ is said to be quasidifferentiable at $\bar{y} \in Y$ if it is directionally differentiable at $\bar{y}$ and there exists $w^*-$compact sets $G$ and $H$ such that 

$$\Psi'(\bar{y},\cdot)= \sigma_G -\sigma_H.$$ In this case, the sets $G$ and $-H$ are called the subdifferential and superdifferential of $\Psi$ at $\bar{y}$ respectively. The pair of sets $[G,-H]$ is called the quasidifferential of $\Psi$ at $\bar{y}.$ 
\end{Definition} This class of functions is very important since it includes the class of D.C.-functions (difference of convex functions) and the class of locally convex functions (with directional derivative being sublinear). Furthermore, if $\Psi$ is quasidifferentiable at $\bar{y}\in Y$ and $\partial_{DH}\Psi(\bar{y})$ is nonempty, it can be shown that 

\begin{equation}\label{eq: DH- subdif}
\partial_{DH}\Psi(\bar{y})= G \ominus H,
\end{equation} where $G \ominus H=\{ y^* \in Y^*\;| \; y^* + H\subseteq G\},$ is the so called Hadwiger-Pontryagin difference  of sets, see \cite{25,26}. For a comprehensive review on quasidifferentiability, see also \cite{11}.

A specific subdifferential concept for locally Lipschitz maps is defined via the directional derivative of Michel-Penot. Formally, given $\Psi:Y\to \mathbb{R}$ being locally Lipschitz at $\bar{y}\in Y,$ the Michel-Penot directional derivative of $\Psi$ at $\bar{y}$ in the direction $v\in Y$ is defined as 

$$\Psi^\Diamond(\bar{y},v):=\sup_{z\in Y} \limsup_{t\downarrow 0} \frac{\Psi(\bar{y}+tv +tz)-\Psi(\bar{y}+tz)}{t}.$$ In this case, the Michel-Penot subdifferential of $\Psi$ at $\bar{y}$ is  

$$\partial_{MP}\Psi (\bar{y}):=\{y^* \in Y^*:\langle y^*,y \rangle\leq  \Psi^\Diamond(\bar{y},y) \textrm{ for all } y\in Y\}.$$ Furthermore, we say that $\Psi$ is MP-regular at $\bar{y}$ if $\Psi^\Diamond (\bar{y},\cdot)= \Psi'(\bar{y},\cdot).$ Note that the class of $MP-$ regular functions at $\bar{y}$ includes in particular the class of convex continuous functions at $\bar{y}.$

The following proposition reviews some well known properties of the Michel-Penot derivative and its relation with the directional derivative.
\begin{Proposition}[Schirotzek, \cite{13}]\label{mp properties}
Let $\Psi:Y\to \mathbb{R}$ be locally Lipschitz at $\bar{y}\in Y$ with constant $L>0.$ Then,
\begin{itemize}
\setlength\itemsep{1em}
\renewcommand{\labelitemi}{\scriptsize$\blacksquare$} 
\item[(i)] $\Psi^\Diamond(\bar{y},\cdot)$ is well defined and sublinear. Furthermore, if $\Psi'(\bar{y},v)$ exists, the inequality 

$$\Psi'(\bar{y},v)\leq \Psi^\Diamond(\bar{y},v)\leq L\|v\|$$ holds,
\item[(ii)] $\partial_{MP}\Psi (\bar{y})$ is a nonempty, convex and $w^*-$compact subset of $Y^*$ such that 

$$\Psi^\Diamond(\bar{y},\cdot)=\sigma_{\partial_{MP}\Psi (\bar{y})},$$

\item[(iii)] If $\Psi$ is directionally differentiable at $\bar{y},$ then $\partial_{DH}\Psi(\bar{y})\subseteq \partial_{MP}\Psi (\bar{y}).$
\end{itemize}

\end{Proposition}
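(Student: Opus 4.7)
The three claims split naturally, with (i) doing the structural work and (ii), (iii) following from it via standard convex-analytic tools. My plan is to first establish that $p := \Psi^\Diamond(\bar{y},\cdot)$ is a continuous sublinear functional on $Y$ lying between $\Psi'(\bar{y},\cdot)$ and $L\|\cdot\|$, and then to feed this into the machinery of Proposition \ref{properties of convex}.

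For part (i), I would start from the local Lipschitz estimate
\[
\left|\frac{\Psi(\bar{y}+tv+tz)-\Psi(\bar{y}+tz)}{t}\right| \leq L\|v\|,
\]
valid for $t>0$ small enough and uniformly in $z$, which yields both well-definedness of $p$ and the upper bound $\Psi^\Diamond(\bar{y},v)\leq L\|v\|$. Taking $z=0$ in the defining supremum gives $\Psi'(\bar{y},v)\leq \Psi^\Diamond(\bar{y},v)$ whenever the derivative exists. Positive homogeneity (for $\lambda>0$) follows from the substitutions $s=\lambda t$ and $z'=z/\lambda$. The main obstacle is subadditivity, which I would tackle by writing
\[
\Psi(\bar{y}+t(v_1+v_2)+tz)-\Psi(\bar{y}+tz) = \bigl[\Psi(\bar{y}+tv_1+t(v_2+z))-\Psi(\bar{y}+t(v_2+z))\bigr] + \bigl[\Psi(\bar{y}+tv_2+tz)-\Psi(\bar{y}+tz)\bigr],
\]
dividing by $t$, using $\limsup(a_t+b_t)\leq \limsup a_t+\limsup b_t$, and taking the supremum over $z$; the substitution $z\mapsto v_2+z$ in the first summand makes the resulting bound split as $\Psi^\Diamond(\bar{y},v_1)+\Psi^\Diamond(\bar{y},v_2)$.

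For (ii), since $p$ is continuous and sublinear, one has $p'(0,\cdot)=p(\cdot)$, so items (ii) and (iii) of Proposition \ref{properties of convex} applied to $p$ at the origin yield that $\partial p(0)$ is nonempty, convex, $w^*$-compact, with $p=\sigma_{\partial p(0)}$. By construction $\partial p(0)=\partial_{MP}\Psi(\bar{y})$, so this finishes (ii); the Lipschitz bound on $p$ additionally forces $\partial_{MP}\Psi(\bar{y})\subseteq L\,\mathbb{B}(0,1)$, which provides a direct Banach--Alaoglu argument for the $w^*$-compactness if one prefers to avoid quoting it. Part (iii) is then immediate: for $y^*\in\partial_{DH}\Psi(\bar{y})$, chaining the inequality from (i) gives $\langle y^*,v\rangle \leq \Psi'(\bar{y},v)\leq \Psi^\Diamond(\bar{y},v)$ for every $v\in Y$, so $y^*\in\partial_{MP}\Psi(\bar{y})$. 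The delicate step throughout is subadditivity in (i): the nested sup-over-$z$ and limsup-over-$t$ structure does not succumb to a naive appeal to subadditivity of $\limsup$, and the algebraic decomposition above is what lets the bound factor cleanly through the outer supremum.
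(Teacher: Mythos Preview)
The paper does not supply its own proof of this proposition: it is quoted as a preliminary result from Schirotzek \cite{13} and left unproved. Your argument is correct and is essentially the standard one. The telescoping identity you use for subadditivity, together with the substitution $z\mapsto v_2+z$, is exactly the device that makes the Michel--Penot construction work (and is the reason for the extra supremum over $z$ in the definition, as compared with the Clarke directional derivative). Parts (ii) and (iii) then follow, as you say, by recognising $\Psi^\Diamond(\bar y,\cdot)$ as a continuous sublinear functional and invoking Proposition~\ref{properties of convex} at the origin. One small addition you could make explicit: for positive homogeneity you treat $\lambda>0$; the case $\lambda=0$ follows from $\Psi^\Diamond(\bar y,0)=0$, which is immediate from the definition.
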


\section{Generators of dual cones}\label{s-3}

Let $\Psi: Y \to \Bbb R$ be a given continuous functional and assume that the set 

$$C:=\{y\in Y: \Psi(y)\leq 0\}$$ is a convex cone. In this section, we are concerned with the problem of finding a suitable representation for $C^*,$ the dual cone of $C,$ in terms of a subdifferential of $\Psi$. This problem has already been studied in the literature: see \cite{21} for a treatment of the case in which $\Psi$ is convex, and \cite{22} and the references therein for results when $\Psi$ is quasiconvex and locally Lipschitz. In these results, a constraint qualification (of Slater's type) is assumed. Furthermore, in \cite{23}, this constraint qualification is removed, and, assuming convexity of $\Psi,$ a representation of the dual cone was obtained as the Painleve-Kuratowski upper limit of cones generated by the subdifferential operator. In this section, we analyze the case in which $\Psi$ is quasidifferentiable and we obtain an approximate representation of $C^*$ in terms of the Dini - Hadamard and Michel - Penot subdifferentials. We start with a simple lemma.   

\begin{Lemma}\label{wcomp}

Let $G$ be a $w^*-$compact subset of $Y^*$ such that $0\notin G$ and $J$ a closed subset of $\mathbb{R}.$ Set $$D:= \{ty^*:t\in J, \;y^* \in G\}.$$ Then, $D$ is $w^*-$closed.  
\end{Lemma}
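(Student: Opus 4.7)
The plan is to show that every $w^*$-convergent net in $D$ has its limit back in $D$. First I would apply the separation theorem to produce a $w^*$-continuous linear functional on $Y^*$ that strictly separates $G$ from $\{0\}$. Since $G$ is $w^*$-compact (hence $w^*$-closed) and $0\notin G$, Theorem \ref{t - separationtheorem}(ii), applied in the locally convex space $(Y^*,w^*)$, provides such a functional, which must be given by evaluation at some $y\in Y$. After possibly replacing $y$ by $-y$, this reads
$$0 < \delta := \inf_{y^*\in G}\langle y^*,y\rangle.$$

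Next, let $z^*_\alpha = t_\alpha y^*_\alpha \in D$ be a net with $z^*_\alpha \stackrel{w^*}{\to} z^*$. Testing against $y$, the scalar net $\langle z^*_\alpha,y\rangle = t_\alpha\langle y^*_\alpha,y\rangle$ converges to $\langle z^*,y\rangle$, so it is eventually bounded; combined with $\langle y^*_\alpha,y\rangle \geq \delta$, this yields
$$|t_\alpha| \leq \delta^{-1}\bigl|\langle z^*_\alpha,y\rangle\bigr|,$$
so $(t_\alpha)$ is eventually bounded in $\mathbb{R}$. Passing to a subnet, I may assume $t_\alpha \to t$ for some $t\in\mathbb{R}$, and since $J$ is closed, $t\in J$.

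It then remains to show $z^*\in D$, for which I would split into two cases. If $t\neq 0$, then $t_\alpha\neq 0$ eventually, so $y^*_\alpha = t_\alpha^{-1}z^*_\alpha$ $w^*$-converges to $t^{-1}z^*$; the $w^*$-closedness of $G$ yields $t^{-1}z^*\in G$, whence $z^* = t(t^{-1}z^*)\in D$. In the remaining case $t=0$, I would exploit that every $w^*$-compact subset of $Y^*$ is $w^*$-bounded: for each $y'\in Y$, the quantity $M_{y'} := \sup_{y^*\in G}|\langle y^*,y'\rangle|$ is finite, so
$$|\langle z^*_\alpha,y'\rangle| \leq |t_\alpha|\, M_{y'} \to 0,$$
forcing $z^*=0$. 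Since $0=t\in J$ and $G\neq\emptyset$ (else $D=\emptyset$ is trivially closed), one writes $z^* = 0\cdot y^*$ for any $y^*\in G$, giving $z^*\in D$.

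The main technical subtlety I anticipate lies in the degenerate case $t=0$, where the factorization $ty^*$ is not unique and naively dividing by $t_\alpha$ fails; the observation that $w^*$-compactness automatically gives $w^*$-boundedness of $G$ is what closes the argument cleanly. The hypothesis $0\notin G$ enters essentially, as it alone allows the strict separation that bounds $(t_\alpha)$; without it, sequences like $t_\alpha y^*_\alpha$ with $t_\alpha\to\infty$ and $y^*_\alpha\to 0$ could produce limits outside $D$.
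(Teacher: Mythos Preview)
Your argument has a genuine gap at the very first step. Theorem~\ref{t - separationtheorem}(ii) requires both sets to be convex, but the lemma does not assume $G$ is convex, and the hypothesis $0\notin G$ does \emph{not} imply $0\notin\overline{\operatorname{conv}}^{*}(G)$. For instance, with $Y=\mathbb{R}^2$ and $G=\{(1,0),(-1,0),(0,1)\}$ one has $0\notin G$ yet $0\in\operatorname{conv}(G)$, so there is no $y\in Y$ with $\inf_{y^*\in G}\langle y^*,y\rangle>0$. Without that uniform lower bound your control on $|t_\alpha|$ collapses, and the rest of the proof cannot proceed.

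The paper's proof sidesteps this by reversing the order of extraction: it first uses the $w^*$-compactness of $G$ to pass to a subnet along which the $G$-component $z^*_\alpha$ converges to some $\bar z^*\in G$, and only then exploits $\bar z^*\neq 0$ to choose a single $\bar y\in Y$ with $\langle\bar z^*,\bar y\rangle=1$. From $t_\alpha\langle z^*_\alpha,\bar y\rangle\to\langle\bar y^*,\bar y\rangle$ and $\langle z^*_\alpha,\bar y\rangle\to 1$ it follows directly that $t_\alpha\to\bar t:=\langle\bar y^*,\bar y\rangle\in J$, whence $\bar y^*=\bar t\,\bar z^*\in D$; no case split on $t=0$ versus $t\neq 0$ is needed. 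Your separation-first strategy would go through if one added the hypothesis that $G$ be convex (or merely $0\notin\overline{\operatorname{conv}}^{*}(G)$), which indeed holds in every application of the lemma later in the paper, but as stated the lemma covers non-convex $G$ and your proof does not.
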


\begin{proof}
Let $\{y^*_\alpha\}_I$ be a convergent net in $D.$ Then, there are nets 

$$\{z_\alpha^*\}_I\subseteq G,\;\{t_\alpha\}_I\subseteq J,$$ and an element $\bar{y}^* \in Y^*$ such that, for each $\alpha \in I,$ 

%$$y^*_\alpha = t_\alpha z_\alpha^*, y^*_\alpha \overset{w^*}{\longrightarrow} \bar{y}^*. $$

$$y^*_\alpha = t_\alpha z_\alpha^*,\;\;\; y^*_\alpha \overset{w^*}{\longrightarrow} \bar{y}^*.  $$
Since, by assumption, $G$ is $w^*-$compact, w.l.o.g we can assume that there exists $\bar{z}^*\in G$ such that $z_\alpha^* \overset{w^*}{\longrightarrow} \bar{z}^*.$  Since $0 \notin G,$ it follows that $\bar{z}^* \neq 0.$ Hence, we can find $\bar{y} \in Y$ such that $\langle\bar{z}^*,\bar{y}\rangle=1.$ Now, by the $w^*-$ convergence of $\{y_\alpha^*\}_I$, we have $t_\alpha \langle z_\alpha^*,\bar{y}\rangle\longrightarrow   \langle\bar{y}^*,\bar{y}\rangle,$ from which we deduce that $t_\alpha \longrightarrow \langle \bar{y}^*,\bar{y}\rangle.$ Since $J$ is closed, we have $\bar{t}:=\langle \bar{y}^*,\bar{y}\rangle\in J.$ Hence, we get

$$\bar{y}^*= \lim_{\alpha \in I}t_\alpha z_\alpha^* = \bar{t} \bar{z}^* \in D,$$ as desired. 
\end{proof}

Next, we proceed to analyze the case on which $\Psi$ is sublinear. 
  
\begin{Lemma}\label{dualconesublinear}
Given a continuous sublinear functional $\Psi:Y\to \mathbb{R},$ consider the convex cone 

$$C:=\{y\in Y: \Psi(y)\leq 0\}.$$ Then, we have

$$C^*=  \overline{\operatorname{cone}}^{*}\left(-\partial \Psi(0) \right).$$ Furthermore, if $\Psi$ satisfies Slater's condition,  

$$C^*= \operatorname{cone}\left(-\partial \Psi(0) \right).$$
\end{Lemma}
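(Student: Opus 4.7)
The plan is to translate the inclusion $C^* \supseteq \overline{\operatorname{cone}}^*(-\partial\Psi(0))$ into a direct subgradient estimate, and then to use a separation theorem in the $w^*$-topology for the reverse. A key observation to keep in mind throughout is that, since $\Psi$ is continuous sublinear, Proposition \ref{properties of convex}(iii)--(iv) applied at $\bar y=0$ (with $\Theta=\Psi$ and the elementary fact $\Psi'(0,\cdot)=\Psi$) yields
\begin{equation*}
\Psi=\sigma_{\partial\Psi(0)},
\end{equation*}
and $\partial\Psi(0)$ is convex and $w^*$-compact.

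\textbf{Step 1: the easy inclusion.} Take any $y^*=-\lambda z^*$ with $\lambda\geq 0$ and $z^*\in\partial\Psi(0)$. By the subgradient inequality and $\Psi(0)=0$, for every $y\in C$,
\begin{equation*}
\langle z^*,y\rangle\leq \Psi(y)\leq 0,
\end{equation*}
so $\langle y^*,y\rangle=-\lambda\langle z^*,y\rangle\geq 0$. Hence $\operatorname{cone}(-\partial\Psi(0))\subseteq C^*$, and since $C^*$ is $w^*$-closed, this already gives $\overline{\operatorname{cone}}^{*}(-\partial\Psi(0))\subseteq C^*$.

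\textbf{Step 2: the hard inclusion via $w^*$-separation.} Suppose by contradiction that there exists $\bar y^*\in C^*\setminus D$, where $D:=\overline{\operatorname{cone}}^{*}(-\partial\Psi(0))$. Since $D$ is a $w^*$-closed convex cone and $\{\bar y^*\}$ is $w^*$-compact, Theorem \ref{t - separationtheorem}(ii) applied in the locally convex space $(Y^*,w^*)$ produces a continuous linear functional on $(Y^*,w^*)$---which is precisely some $\bar y\in Y$---strictly separating them. Because $D$ is a cone, the separation can be rewritten as
\begin{equation*}
\langle \bar y^*,\bar y\rangle<0\leq \langle d^*,\bar y\rangle\quad\text{for all } d^*\in D.
\end{equation*}
In particular, taking $d^*=-z^*$ with $z^*\in\partial\Psi(0)$ arbitrary, we get $\langle z^*,\bar y\rangle\leq 0$ for every $z^*\in\partial\Psi(0)$, hence $\Psi(\bar y)=\sigma_{\partial\Psi(0)}(\bar y)\leq 0$, i.e., $\bar y\in C$. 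But then $\langle \bar y^*,\bar y\rangle\geq 0$, contradicting $\bar y^*\in C^*$. This proves the first equality.

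\textbf{Step 3: the Slater case.} Under Slater's condition, pick $\bar y$ with $\Psi(\bar y)<0$. If $0\in \partial\Psi(0)$ then the subgradient inequality would force $0\leq \Psi(\bar y)$, a contradiction; hence $0\notin \partial\Psi(0)$, equivalently $0\notin -\partial\Psi(0)$. Since $-\partial\Psi(0)$ is $w^*$-compact, Lemma \ref{wcomp} applied with $G=-\partial\Psi(0)$ and $J=[0,\infty)$ shows that $\operatorname{cone}(-\partial\Psi(0))$ is itself $w^*$-closed. Combined with the first part, this yields $C^*=\operatorname{cone}(-\partial\Psi(0))$.

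The main subtlety is Step 2: one must remember that the continuous dual of $(Y^*,w^*)$ is $Y$ itself, so that the separating functional from Theorem \ref{t - separationtheorem}(ii) is realized by an element of the primal space, enabling the crucial step of rewriting the separation via $\sigma_{\partial\Psi(0)}$.
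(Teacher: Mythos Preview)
Your proof is correct and follows essentially the same route as the paper: the subgradient inequality for the inclusion $\operatorname{cone}(-\partial\Psi(0))\subseteq C^*$, a $w^*$-separation argument for the reverse, and Lemma~\ref{wcomp} for the Slater case. Your Step~2 is in fact slightly more streamlined than the paper's---you invoke $\Psi=\sigma_{\partial\Psi(0)}$ directly to conclude $\bar y\in C$, whereas the paper reaches the same conclusion via an auxiliary contradiction---and the only slip is the phrase ``contradicting $\bar y^*\in C^*$'' at the end of Step~2, which should read ``contradicting $\langle\bar y^*,\bar y\rangle<0$.''
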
 
\begin{proof}

 Let 

$$D_{\Psi}:= \overline{\operatorname{cone}}^{*}\left(-\partial \Psi(0) \right).$$  Take any $y^* \in \partial \Psi(0).$ We have then 

$$\forall \;y\in Y:\;\Psi(y)\geq \Psi(0)+\langle y^*,y\rangle=\langle y^*,y\rangle.$$ This implies that $$C\subseteq C_{y^*}:=\{y\in Y: \langle y^*,y\rangle\leq 0\}$$ and, hence, $$C_{y^*}^*\subseteq C^*.$$ From this we deduce $$\bigcup_{y^* \in \partial\Psi(0)}C_{y^*}^*\subseteq C^*, $$ and since $C^*$ is $w^*-$closed, we also have $$\overline{\bigcup_{y^* \in \partial\Psi(0)}C_{y^*}^*}^*\subseteq C^*.$$  By Lemma 9.6.1 in \cite{21}, we have $C_{y^*}^*= \operatorname{cone}(-\{y^*\}),$ so that  

$$\overline{\bigcup_{y^* \in \partial\Psi(0)}C_{y^*}^*}^*= \overline{\bigcup_{y^* \in \partial\Psi(0)}}^* \operatorname{cone}(-y^*)= \overline{\operatorname{cone}}^{*}\left(\bigcup_{y^* \in \partial \Psi(0)}-\{y^*\} \right)=\overline{\operatorname{cone}}^{*}\left(-\partial \Psi(0) \right)=D_{\Psi}.$$ This means that

$$D_{\Psi} \subseteq C^*.$$ Now let us assume that $$\bar{y}^* \notin D_{\Psi}.$$ By Theorem \ref{t - separationtheorem} (ii), there is a $\bar{y}\in Y$ such that 
$$\langle \bar{y}^*,\bar{y}\rangle< \inf_{y^* \in D_{\Psi}} \{\langle y^*,\bar{y}\rangle\}=0.$$ We now claim that $\bar{y} \in C.$ Otherwise, it holds that $\Psi(\bar{y})> 0.$ Then we get 

$$0< \Psi(\bar{y})= \lim_{t\to 0^+} \frac{\Psi(t \bar{y})-\Psi(0)}{t}=\Psi^{'}(0 ;\bar{y})= \max\{\langle y^*,\bar{y}\rangle:y^* \in \partial \Psi(0)\}.$$ In particular, there is a $\hat{y}^* \in \partial \Psi(0)$ such that $\langle\hat{y}^*,\bar{y}\rangle>0.$ From this we arrive at 

$$\forall \;t\geq 0:\;-t\hat{y}^* \in D_{\Psi}$$ and then we conclude that

$$ 0=\inf_{y^* \in D_{\Psi}} \{\langle y^*,\bar{y}\rangle\}\leq \inf_{t\geq 0}\{-t\langle\hat{y}^*,\bar{y}\rangle\}=-\infty ,$$ a contradiction. Hence we have proved that $\bar{y}\in C$ and $\langle \bar{y}^*,\bar{y}\rangle<0,$ which means that $\bar{y}^* \notin C^*.$ This proves the first part of the Theorem.

Now assume that $\Psi$ satisfies Slater's condition, so that we have a $\bar{y}\in Y$ such that $\Psi(\bar{y})<0.$  It is easy to see that this is equivalent to the condition

$$0\notin \partial \Psi(0).$$ Since $\partial \Psi(0)$ is $w^*$-compact, applying Lemma \ref{wcomp} with $J=\Bbb R_+,$ we get that $\operatorname{cone}(-\partial \Psi(0))$ is $w^*-$closed and, hence, the second statement follows. This concludes the proof. 

\end{proof}

\begin{Remark}
Note that the first part of Lemma \ref{dualconesublinear} cannot be obtained from Proposition 9.6.1 in \cite{21}, where the convex case is analyzed. This is because of the fact that the Slater property is assumed. Hence, in our proof we exploited the sublinearity of $\Psi.$ Our next example shows that this representation without assuming Slater's condition is an intrinsic property of sublinear functionals, and that convexity is not enough. 
\end{Remark}

\begin{Example}
Consider $\Psi: \Bbb R \to \Bbb R $ as 
$$
\Psi(y) = 
     \begin{cases}
       0 &\quad\text{if } y\leq 0,\\
       y^2 &\quad\text{if } y>0.\\
     \end{cases}
$$ Then, $\Psi$ is convex and continuous. We have $$-\Bbb R_+=\{y\in \Bbb R: \Psi(y)\leq 0\}.$$ Furthermore, $\Psi$ do not satisfies Slater's condition. Then, it is easy to check that $\partial \Psi(0)=\{0\}$ and hence we have 

$$ \overline{\operatorname{cone}}(-\partial \Psi(0))= \{0\}\neq -\Bbb R_+.$$
\end{Example}
The following corollary shows that our result generalizes Theorem 2.13 and Corollary 2.14 derived by Jahn in \cite{2} for Bishop-Phelps cones.
\begin{Corollary}\label{bpcone}

Let $\bar{y}^*\in Y^*$ and consider the Bishop-Phelps cone

$$C(\bar{y}^*):=\{y\in Y: \langle \bar{y}^*, y\rangle \geq \|y\|\}.$$ Then, $$C(\bar{y}^*)^*= \overline{\operatorname{cone}}^{*}(\Bbb B(\bar{y}^*,1)).$$ If $\|\bar{y}^*\|_*>1,$ then  

$$C(\bar{y}^*)^*= \operatorname{cone}(\Bbb B(\bar{y}^*,1)).$$

\end{Corollary}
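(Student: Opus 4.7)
The plan is to reduce the statement to a direct application of Lemma \ref{dualconesublinear}. The right choice of sublinear functional is
$$\Psi(y) := \|y\| - \langle \bar{y}^*, y\rangle,$$
which is continuous and sublinear as the sum of a norm and a continuous linear functional. By construction, $C(\bar{y}^*) = \{y \in Y : \Psi(y) \leq 0\}$, so Lemma \ref{dualconesublinear} applies.

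The key computational step is to identify $\partial \Psi(0)$. Using the Moreau–Rockafellar sum rule for continuous convex functions together with the standard fact that the Fenchel subdifferential of the norm at the origin is the closed unit ball of the dual, I would obtain
$$\partial \Psi(0) = \partial \|\cdot\|(0) + \partial (-\langle \bar{y}^*,\cdot\rangle)(0) = \Bbb B(0,1) - \bar{y}^* = \Bbb B(-\bar{y}^*,1),$$
so that $-\partial \Psi(0) = \Bbb B(\bar{y}^*,1)$. Plugging this into the first conclusion of Lemma \ref{dualconesublinear} immediately yields the representation $C(\bar{y}^*)^* = \overline{\operatorname{cone}}^*(\Bbb B(\bar{y}^*,1))$.

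For the second assertion, I would observe that Slater's condition for $\Psi$ reads $\|y\| < \langle \bar{y}^*, y\rangle$ for some $y\in Y$, which by homogeneity is equivalent to the existence of a unit vector $y$ with $\langle \bar{y}^*, y\rangle > 1$, i.e., to $\|\bar{y}^*\|_* > 1$. Under this assumption, the second conclusion of Lemma \ref{dualconesublinear} gives the desired equality $C(\bar{y}^*)^* = \operatorname{cone}(\Bbb B(\bar{y}^*,1))$ without taking closure.

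I do not anticipate a serious obstacle. The only slightly delicate point is invoking the convex sum rule at $0$, but both summands are continuous everywhere so the rule applies without any qualification; everything else is just bookkeeping with the definition of $\Psi$.
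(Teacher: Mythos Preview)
Your proposal is correct and follows essentially the same route as the paper: define $\Psi(y)=\|y\|-\langle \bar{y}^*,y\rangle$, compute $\partial\Psi(0)=\Bbb B(0,1)-\bar{y}^*$ so that $-\partial\Psi(0)=\Bbb B(\bar{y}^*,1)$, and then invoke both parts of Lemma~\ref{dualconesublinear}. The only cosmetic differences are that you make the Moreau--Rockafellar sum rule explicit and phrase the Slater verification via homogeneity, whereas the paper simply states the subdifferential formula and checks Slater directly from the definition of the dual norm.
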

\begin{proof}
 Define $\Psi(y)=\|y\|-\langle \bar{y}^*,y \rangle.$ Obviously, $\Psi$ is sublinear and continuous. Furthermore, it is well known that $\partial \|\cdot\|(0)= \Bbb B(0,1).$ We  then find that
$$\partial\Psi(0)=\Bbb B(0,1)-\bar{y}^*,$$ and then  

$$-\partial\Psi(0)=\Bbb B(\bar{y}^*,1).$$ Applying now Lemma \ref{dualconesublinear}, the first part is obtained. 

Now assume that $\|\bar{y}^*\|_*>1.$ Then, there exists $\bar{y}$ such that $\langle \bar{y}^*, \bar{y} \rangle> \|\bar{y}\|.$ This is obviously equivalent to $\Psi(\bar{y})<0,$ proving that $\Psi$ satisfies Slater's condition. The second part now follows from the second part of Lemma \ref{dualconesublinear}. 
\end{proof}
\begin{Remark}
We want to mention that, in order to drop the closure operator in Corollary 2.14 on \cite{2}, it was assumed in addition that $Y$ was either separable or reflexive. These assumptions were not needed in Corollary \ref{bpcone} and, hence, our result is stronger.
\end{Remark}

Now we turn into the case in which $\Psi$ is quasidifferentiable. The next lemma will be needed.

\begin{Lemma}\label{nonsublinearnestedcone}
Let $\Psi: Y\to \mathbb{R}$ be continuous and directionally differentiable at 0 and let $C$ be a nontrivial cone in $Y.$ Assume that $\Psi$ gives a robust representation of $C$ and, hence,

$$C=\{y\in Y: \Psi(y)\leq 0\}.$$ Then,

$$\{y\in Y: \Psi'(0,y) < 0\}\subseteq \operatorname{int}(C)\subseteq \{y \in Y: \Psi'(0,y)\leq 0\}.$$
\end{Lemma}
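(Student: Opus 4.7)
The plan rests on two preliminary observations. First, I claim $\Psi(0)=0$. Since $C$ is a cone, $0\in C$, which gives $\Psi(0)\leq 0$ by the second item of Definition \ref{d-rob}. On the other hand, $0\notin\operatorname{int}(C)$: if a neighborhood $V$ of $0$ were contained in $C$, then for any $y\in Y$ we could pick $t>0$ small enough that $ty\in V\subseteq C$, and the cone property would give $y=(1/t)(ty)\in C$, forcing $C=Y$ and contradicting nontriviality. The third item of Definition \ref{d-rob} therefore excludes $\Psi(0)<0$, so $\Psi(0)=0$. Second, $\operatorname{int}(C)$ is invariant under multiplication by positive scalars: if $x\in\operatorname{int}(C)$ with open neighborhood $U\subseteq C$, then for any $\lambda>0$ the set $\lambda U$ is an open neighborhood of $\lambda x$ contained in $\lambda C\subseteq C$, since $C$ is a cone.

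With these at hand, the first inclusion follows easily. If $\Psi'(0,y)<0$, then using $\Psi(0)=0$ and the definition of the directional derivative, there exists $t_0>0$ with $\Psi(t_0 y)<0$; robustness (third item of Definition \ref{d-rob}) yields $t_0 y\in\operatorname{int}(C)$, and the positive-homogeneity of $\operatorname{int}(C)$ applied with $\lambda=1/t_0$ gives $y\in\operatorname{int}(C)$.

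For the second inclusion, take $y\in\operatorname{int}(C)$. By the positive-homogeneity of $\operatorname{int}(C)$, $ty\in\operatorname{int}(C)$ for every $t>0$, hence $\Psi(ty)<0=\Psi(0)$ by the third item of Definition \ref{d-rob}. Dividing by $t$ and letting $t\downarrow 0$ in the definition of $\Psi'(0,y)$ yields $\Psi'(0,y)\leq 0$. There is no real obstacle here; the only subtle point is recognizing that $\Psi(0)=0$ must be extracted from the nontriviality of $C$, since it is not built into the axioms of robustness themselves.
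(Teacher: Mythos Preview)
Your proof is correct and follows essentially the same approach as the paper: both rely on $\Psi(0)=0$ and the positive-homogeneity of $\operatorname{int}(C)$, then argue via the sign of the difference quotients. Your treatment is in fact more explicit than the paper's, which simply asserts $\Psi(0)=0$ ``because of the robustness assumption'' and that ``$\operatorname{int}(C)\cup\{0\}$ is a cone'' without spelling out why nontriviality is needed; your direct argument for the first inclusion is also a clean alternative to the paper's contrapositive.
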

\begin{proof}
First, note that $\Psi(0)=0$ necessarily because of the robustness assumption. Take $y\in Y$ such that $\Psi'(0,y) < 0.$ If $y \notin \operatorname{int}(C),$ because $\Psi$ is a gives a robust representation of $C,$ we get $\Psi(y)\geq 0.$ Since $\operatorname{int}(C)\cup\{0\}$ is a cone, $\Psi(\alpha y)\geq 0$ for all $\alpha >0.$ This implies 

$$\Psi'(0,y)= \lim_{\alpha \downarrow 0} \frac{\Psi(\alpha y)- \Psi(0)}{\alpha}\geq 0,$$ a contradiction. This means that $y \in \operatorname{int}(C).$\\

Now let $y \in \operatorname{int}(C).$ By robustness, $\Psi(y)<0,$ so $\Psi(\alpha y)<0$ for all $\alpha >0.$ Now it is easy to see that for any $\alpha >0,$ the quotient

$$\frac{\Psi(\alpha y)- \Psi(0)}{\alpha}<0.$$ Taking the limit when $\alpha \to 0,$ we have $\Psi'(0,y)\leq 0.$ 
\end{proof}

We can now state the main result of this section.

\begin{Theorem}\label{dualconelipschitz}
Let $\Psi:Y\to \mathbb{R}$ be locally Lipschitz and quasidifferentiable at $0$. Assume that  

$$C:=\{y\in Y: \Psi(y)\leq 0\}$$ is a cone for which $\Psi$ gives a robust representation. Furthermore, assume that the constraint qualification $0\notin \partial_{MP}\Psi(0)$ holds. Then, we have

$$\operatorname{cone}\left(-\partial_{DH} \Psi(0) \right)\subseteq C^*\subseteq  \operatorname{cone}\left(-\partial_{MP} \Psi(0) \right).$$ If in addition $\Psi$ is MP-regular at $0,$ then 

$$C^*=  \operatorname{cone}\left(-\partial_{MP} \Psi(0) \right).$$
\end{Theorem}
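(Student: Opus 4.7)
The plan is to exploit the sublinear case already handled by Lemma~\ref{dualconesublinear} by sandwiching $\Psi$ between its directional-derivative surrogates $\Psi'(0,\cdot)$ and $\Psi^\Diamond(0,\cdot)$. A preliminary observation is that $\Psi(0)=0$: since $0\in C$ we have $\Psi(0)\leq 0$, and $\Psi(0)<0$ together with robustness would force $0\in\operatorname{int}(C)$, which combined with the cone property would contradict nontriviality of $C$. Consequently, for every $y\in C$ the cone property gives $\Psi(ty)\leq 0$ for all $t>0$, so
\[
\Psi'(0,y)=\lim_{t\downarrow 0}\frac{\Psi(ty)-\Psi(0)}{t}\leq 0,
\]
showing $C\subseteq\{y:\Psi'(0,y)\leq 0\}$.

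For the first inclusion, I would take $y^*\in\partial_{DH}\Psi(0)$; by definition $\langle y^*,y\rangle\leq\Psi'(0,y)$ for all $y\in Y$, and restricting to $y\in C$ yields $\langle y^*,y\rangle\leq 0$, i.e.\ $-y^*\in C^*$. Since $C^*$ is a cone, this gives $\operatorname{cone}(-\partial_{DH}\Psi(0))\subseteq C^*$ immediately.

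For the second inclusion, I would introduce the auxiliary cone $\tilde C:=\{y\in Y:\Psi^\Diamond(0,y)\leq 0\}$. By Proposition~\ref{mp properties}, $\Psi^\Diamond(0,\cdot)$ is continuous and sublinear with $\Psi^\Diamond(0,\cdot)=\sigma_{\partial_{MP}\Psi(0)}$, so its Fenchel subdifferential at $0$ equals $\partial_{MP}\Psi(0)$. The hypothesis $0\notin\partial_{MP}\Psi(0)$ combined with Theorem~\ref{t - separationtheorem}(ii) yields $\bar y\in Y$ with $\Psi^\Diamond(0,\bar y)<0$, i.e.\ Slater's condition holds for the sublinear functional $\Psi^\Diamond(0,\cdot)$, and Lemma~\ref{dualconesublinear} therefore gives $\tilde C^*=\operatorname{cone}(-\partial_{MP}\Psi(0))$.

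The core obstacle — and the step I expect to be most delicate — is to prove $\tilde C\subseteq C$, after which dualization gives $C^*\subseteq\tilde C^*=\operatorname{cone}(-\partial_{MP}\Psi(0))$. The inequality $\Psi'(0,\cdot)\leq\Psi^\Diamond(0,\cdot)$ together with Lemma~\ref{nonsublinearnestedcone} only delivers the strict sublevel inclusion $\{y:\Psi^\Diamond(0,y)<0\}\subseteq\operatorname{int}(C)\subseteq C$. To extend this to the weak sublevel set, I exploit sublinearity and the Slater point: for any $y\in\tilde C$ and every $t>0$,
\[
\Psi^\Diamond(0,y+t\bar y)\leq\Psi^\Diamond(0,y)+t\Psi^\Diamond(0,\bar y)<0,
\]
so $y+t\bar y\in C$, and since $C$ is closed by continuity of $\Psi$, letting $t\downarrow 0$ gives $y\in C$. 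This closes the sandwich
\[
\operatorname{cone}(-\partial_{DH}\Psi(0))\subseteq C^*\subseteq\operatorname{cone}(-\partial_{MP}\Psi(0)).
\]
Finally, under MP-regularity the identity $\Psi'(0,\cdot)=\Psi^\Diamond(0,\cdot)$ immediately forces $\partial_{DH}\Psi(0)=\partial_{MP}\Psi(0)$, and the two bounds collapse to the stated equality.
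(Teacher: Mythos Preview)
Your proof is correct and follows the same overall sandwich strategy as the paper, but with a genuine simplification on the Dini--Hadamard side. The paper exploits the quasidifferential representation $\Psi'(0,\cdot)=\sigma_G-\sigma_H$, derives the chain $\sigma_{G\ominus H}\leq\Psi'(0,\cdot)\leq\Psi^\Diamond(0,\cdot)$, identifies $\partial_{DH}\Psi(0)=G\ominus H$ via \eqref{eq: DH- subdif}, and then applies Lemma~\ref{dualconesublinear} to \emph{both} sublinear endpoints (after transferring Slater's condition downward through the chain). You bypass this for the left inclusion: from $\Psi'(0,y)\leq 0$ on $C$ you read off $-\partial_{DH}\Psi(0)\subseteq C^*$ straight from the definition, so that direction needs only directional differentiability, not the quasidifferential pair $[G,H]$ at all. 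On the Michel--Penot side your argument coincides with the paper's, though you make explicit the passage from $\{\Psi^\Diamond(0,\cdot)<0\}\subseteq C$ to $\tilde C\subseteq C$ via the Slater point and subadditivity, whereas the paper leaves this step compressed inside the appeal to Lemma~\ref{dualconesublinear}. One minor point: your opening observation $\Psi(0)=0$ invokes nontriviality of $C$, which the theorem does not state explicitly; the paper handles this implicitly by citing Lemma~\ref{nonsublinearnestedcone}, whose hypotheses include nontriviality.
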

\begin{proof}
Since $\Psi$ is quasidifferentiable at $0,$ we have the existence of $w^*-$compact sets $G, H \subseteq Y^*$ such that 

$$\Psi'(0,\cdot)= \sigma_{G}(\cdot)-\sigma_{H}(\cdot).$$ The inclusion $G \ominus H  +H \subseteq G$ implies in particular that 

$$\sigma_{G \ominus H}(\cdot) +\sigma_H(\cdot) \leq \sigma_G(\cdot), $$ which together with Proposition \ref{mp properties} (i), gives us
\begin{equation}\label{eq:dclbound}
\sigma_{G \ominus H}(\cdot) \leq  \sigma_G(\cdot) -\sigma_H(\cdot)= \Psi'(0,\cdot)\leq \Psi^\Diamond(0,\cdot).
\end{equation}

Note that the local Lipschitz assumption at $0$ already gives us the continuity of both $\sigma_{G \ominus H}$ and $\Psi^\Diamond(0,\cdot).$ Take now any $y\in \operatorname{int}(C).$  By \eqref{eq:dclbound} and Lemma \ref{nonsublinearnestedcone} it follows that 

$$\sigma_{G \ominus H}(y)\leq \Psi'(0,y)\leq 0,$$ or 

$$\operatorname{int}(C)\subseteq \{y \in Y: \sigma_{G \ominus H}(y)\leq 0 \}.$$ Since the last set is closed, we actually have 
$$C\subseteq \{y \in Y: \sigma_{G \ominus H}(y)\leq 0 \}.$$ On the other hand, take any $y\in Y$ such that $\Psi^\Diamond(0,y)<0.$ Again, from \eqref{eq:dclbound} and Lemma \ref{nonsublinearnestedcone} we get $\Psi'(0,y)<0$ and $y\in C.$ In fact, we have proved that 

\begin{equation}\label{nestedconesgeneral}
\{y\in Y: \Psi^\Diamond(0,y)<0\}\subseteq C\subseteq\{y \in Y: \sigma_{G \ominus H}(y)\leq 0 \}. 
\end{equation}
The constraint qualification $0\notin \partial_{MP}\Psi(0)$ implies that the first set in \eqref{nestedconesgeneral} is nonempty and open. Furthermore, together with \eqref{eq:dclbound} it also implies that both $\Psi^\Diamond(0,\cdot)$ and $\sigma_{G \ominus H}$ satisfy Slater's condition. Also, note that the directional derivative function of $\Psi'(0,\cdot)$ at $0$ is itself because it is the difference of sublinear functionals. From \eqref{eq: DH- subdif}, we know that $\partial_{DH}\Psi'(0,\cdot)(0)= G \ominus H.$ Hence, we find that 

\begin{eqnarray}\label{eq:DHsubdifgeometric}
\partial_{DH}\Psi(0)&= &\{y^* \in Y^*: \langle y^*,y\rangle \leq \Psi'(0,y) \; \forall\; y\in Y\} \nonumber\\
       &= &\{y^* \in Y^*: \langle y^*,y\rangle \leq (\Psi'(0,\cdot))'(0,y) \; \forall\; y\in Y\}\nonumber\\
       &= &\partial_{DH}\Psi'(0,\cdot)(0) \nonumber\\
       &= & G \ominus H.
\end{eqnarray}

Finally, applying Lemma \ref{dualconesublinear} and \eqref{eq:DHsubdifgeometric} we get 

$$\operatorname{cone}\left(-\partial_{DH} \Psi(0) \right)\subseteq C^*\subseteq  \operatorname{cone}\left(-\partial_{MP} \Psi(0) \right),$$ which proves the first part of the Theorem. If in addition $\Psi$ is MP-regular at $0$,then, by definition, we have $\partial_{MP} \Psi(0)= \partial_{DH} \Psi(0)$ and the equality holds. This concludes the proof.   

\end{proof}

In the next sections we will be working with scalarizing functionals that fulfill the conditions of Theorem \ref{dualconelipschitz}. In particular, the robust representation  will be a consequence of the monotonicity and representability axioms, see Definition 7.

\section{Relationships among three types of nonlinear scalarizing functionals}\label{s-4}

Throughout this section, we consider the following assumption:
\begin{Assumption}
Let $(Y,\|\cdot\|)$ be a Banach space and $K\subseteq Y$ a closed, convex and pointed cone with nonempty interior.
\end{Assumption}

We will consider different classes of scalarizing functionals that have been previously studied in the literature and show relationships between them in the sense of inclusion. As mentioned in the introduction, in \cite{10} scalarizing functionals were introduced in an axiomatic way, and it was shown that these axioms are indeed necessary and sufficient in order to characterize the sets of minimal and weakly minimal points to a vector optimization problem. Hence, the definition of monotonicity properties to scalarizing functionals is our starting point in this section. 

\begin{Definition}\label{d-axioms}

Let $\Psi:Y\to \Bbb R$ be a given functional. We say that $\Psi$ is 

\begin{enumerate}
\setlength\itemsep{1em}
\item $K-$ monotone, if $z-y\in K \Longrightarrow \Psi(y)\leq \Psi(z),$
\item strictly $K-$ monotone, if $z-y \in \operatorname{int}(K) \Longrightarrow \Psi(y)< \Psi(z),$
\item strongly $K-$ monotone, if $z-y \in K,\; y\neq z \Longrightarrow \Psi(y)< \Psi(z).$
\end{enumerate}

\end{Definition}

\begin{Definition}\label{d-axiom}
Let $\Psi:Y\to \Bbb R$ be a given continuous functional. 

\begin{enumerate}
\setlength\itemsep{1em}
\renewcommand{\labelitemi}{\scriptsize$\blacksquare$} 
\item We say that $\Psi$ satisfy the monotonicity property if $\Psi$ is strictly $K-$monotone (and hence $K-$monotone).

\item We say that $\Psi$ satisfy the representability property if\\

\begin{enumerate}
\setlength\itemsep{1em}
\item $\{y\in Y: \Psi(y)\leq 0\}\subseteq -K,$ and
\item $\{y\in Y: \Psi(y)< 0\}\subseteq - \operatorname{int}(K).$
\end{enumerate}

\end{enumerate}
\end{Definition}

Although several nonlinear scalarization techniques have been defined in order to solve vector optimization problems, in this paper we deal with some of the most prominent general purpose examples. We formally define these classes:

\begin{Definition}\label{d-func}

Suppose that Assumption 1 is fulfilled. The following classes of functionals are introduced:

\begin{itemize}
\setlength\itemsep{1em}
\renewcommand{\labelitemi}{\scriptsize$\blacksquare$} 
\item Separating functionals with uniform level sets (see Gerstewitz \cite{6}, Gerth, Weidner \cite{16} and Gerstewitz, Iwanow \cite{geiw85})\\

Take any $r \in \operatorname{int}(K).$ Then, the associated functional is 

\begin{equation}\label{funcGW}
\Psi_r(y):= \min\{t\in \mathbb{R}: tr\in y+K\}.
\end{equation}
The class of all functionals of the type \eqref{funcGW} when $r$ varies over $\operatorname{int}(K)$ will be denoted as $\Omega_{GW}.$

\item Hiriart-Urruty functional (see \cite{3})\\

These functionals need as a parameter an equivalent norm $\|\cdot\|'$ to $\|\cdot\|$ and, for such a norm, it is defined as

\begin{equation}\label{funcHU}
\Psi_{\|\cdot\|'}(y):= d(y,-K)- d(y, Y\setminus -K),
\end{equation}
where $d(y, A)$ denotes the usual distance from $y$ to the set $A$ with respect to $\|\cdot\|'.$ The class of all of such functionals when $\|\cdot\|'$ varies over the set of norms in $Y$ that are equivalent to $\|\cdot\|$ will be denoted as $\Omega_{HU}.$

\item Drummond-Svaiter functional (see \cite{7})\\

In this case, the parameter space is the set of $w^*-$compact generators of $K^*.$ For a given $w^*-$compact generator $G,$ the Drummond-Svaiter functional associated to $G$ is now defined as:

\begin{equation}\label{funcDS}
\Psi_{G} (y):= \sigma_G(y)= \max\{\langle y^*,y \rangle:  y^*\in G\}.
\end{equation}
When $G$ goes over the set of all generators of $K^*,$ the class of functionals  obtained will be denoted as $\Omega_{DS}.$

\end{itemize}

\end{Definition}

\begin{Remark}

The functionals introduced in Definition \ref{d-func} can be defined in a more general setting and are very important in many fields of mathematics, especially for deriving optimality conditions in vector optimization. Under the assumption that the objective function of a vector optimization problem is locally Lipschitz, Dutta and Tammer \cite{DutTam} derived Lagrangian necessary conditions on the basis of the limiting subdifferential (Mordukhovich \cite{boris2005} and references therein) and the approximate subdifferential (Ioffe \cite{ioffe86}, \cite{ioffe2}, \cite{ioffe}) using a scalarization by means of the functional (\ref{funcGW}). Furthermore, Ha \cite{Ha10-2} used the functional (\ref{funcHU}) and the approximate subdifferential by Ioffe (see \cite{ioffe86}, \cite{ioffe2}, \cite{ioffe}, \cite{32} ) in order to formulate Lagrange multiplier rules for set-valued optimization problems, compare \cite[Chapter 12]{17}.

\end{Remark}

Under Assumption 1, it can be shown (see \cite{3,6,7} and references therein) that each element of every defined class of scalari\-zations is continuous, sublinear, and satisfies the monotonicity and representability properties introduced in Definition \ref{d-axiom}. It turns out that these properties are inherent of DS-functionals, as our next theorem shows.

\begin{Theorem}\label{all is DS}

Let Assumption 1 be fulfilled and consider any continuous and sublinear functional $\Psi:Y\to \Bbb R$ that fulfills the monotonicity and representability properties. Then, $G:= \partial \Psi(0)$ is a $w^*-$compact generator of $K^*$ such that 

$$\Psi = \Psi_G.$$ 

\end{Theorem}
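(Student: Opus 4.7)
The plan is to verify the three defining conditions of a generator of $K^*$ (Definition \ref{d-generator}) for $G := \partial\Psi(0)$, and then to identify $\Psi$ with $\Psi_G$ via the sublinearity of $\Psi$. Since $\Psi$ is continuous and convex, Proposition \ref{properties of convex} guarantees that $G$ is nonempty, convex, and $w^*$-compact; in particular, $G$ is $w^*$-closed and the convexity requirement from Definition \ref{d-generator} is immediate.

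The heart of the argument is showing that $\Psi$ provides a robust representation of $-K$ in the sense of Definition \ref{d-rob}. The representability property directly yields the inclusions $\{y : \Psi(y) \leq 0\} \subseteq -K$ and $\{y : \Psi(y) < 0\} \subseteq -\operatorname{int}(K)$. For the reverse inclusions I would exploit that $\Psi(0) = 0$ (by sublinearity) together with strict $K$-monotonicity: for $y \in -\operatorname{int}(K)$, the vector $-y = 0-y$ belongs to $\operatorname{int}(K)$, so $\Psi(y) < \Psi(0) = 0$. For $y \in -K$, I would fix any $r \in \operatorname{int}(K)$ and set $y_n := y - \tfrac{1}{n}r$; since $K + \operatorname{int}(K) \subseteq \operatorname{int}(K)$, we have $-y_n \in \operatorname{int}(K)$, hence $\Psi(y_n) < 0$, and continuity of $\Psi$ then gives $\Psi(y) \leq 0$ by passing to the limit.

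Once robustness is established, Slater's condition for $\Psi$ is trivial (any $y \in -\operatorname{int}(K)$ satisfies $\Psi(y) < 0$), so Lemma \ref{dualconesublinear} applied to the convex cone $-K$ yields $(-K)^* = \operatorname{cone}(-\partial\Psi(0))$. Combined with the elementary identity $(-K)^* = -K^*$, this rearranges into $K^* = \operatorname{cone}(G)$, which is the third defining property of a generator. Slater's condition also forces $0 \notin G$, since $0 \in \partial\Psi(0)$ would imply $\Psi \geq 0$ everywhere; using $w^*$-closedness of $G$, this gives $0 \notin \overline{G}$, completing the verification that $G$ is a $w^*$-compact generator of $K^*$.

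It remains to identify $\Psi$ with $\Psi_G = \sigma_G$. Because $\Psi$ is sublinear, a direct computation (or positive homogeneity) gives $\Psi'(0,y) = \Psi(y)$ for every $y \in Y$, and Proposition \ref{properties of convex}(iii) applied with $\Theta = \Psi$ and $\bar{y} = 0$ then delivers $\Psi(\cdot) = \Psi'(0,\cdot) = \sigma_{\partial\Psi(0)} = \sigma_G = \Psi_G$. The only delicate step in this program is the limit argument upgrading strict $K$-monotonicity on $-\operatorname{int}(K)$ to the closed inclusion $-K \subseteq \{\Psi \leq 0\}$; everything else is essentially a direct invocation of Lemma \ref{dualconesublinear} and Proposition \ref{properties of convex}.
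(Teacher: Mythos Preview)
Your proposal is correct and follows essentially the same route as the paper: establish that monotonicity plus representability give a robust representation of $-K$, invoke Lemma \ref{dualconesublinear} under Slater's condition to obtain $K^*=\operatorname{cone}(\partial\Psi(0))$, and then identify $\Psi$ with $\sigma_G$ via sublinearity and Proposition \ref{properties of convex}. The only cosmetic difference is that the paper packages your limit argument for $-K\subseteq\{\Psi\leq 0\}$ into the parenthetical ``strictly $K$-monotone (and hence $K$-monotone)'' in Definition \ref{d-axiom}, so you could shorten that step by invoking $K$-monotonicity directly.
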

\begin{proof}
Because of the monotonicity and representability assumption, we obtain that $\Psi$ gives a robust representation of $-K.$ By Remark \ref{slaterequalrobustness}, $\Psi$ satisfies the Slater condition. From the second part of Lemma \ref{dualconesublinear}, we now find that 

$$K^*= -(-K)^*= \operatorname{cone}(\partial \Psi(0)),$$ such that $G$ is really a $w^*-$compact convex generator of $K^*.$  From the sublinearity of $\Psi$ and Proposition \ref{properties of convex} $(iv)$, we now have

$$\Psi(y)= \Psi'(0,y)=  \sigma_{\partial \Psi(0)}(y)=\sigma_G(y),$$ as expected. 
\end{proof}

The following proposition shows that $\Omega_{GW}$ is the subset of functionals in $\Omega_{DS}$  associated to the basis of $K^*.$

\begin{Proposition}

Let Assumption 1 be fulfilled. Then, $r \in \operatorname{int}(K)$ if and only if there exists a basis $G$ of $K^*$ (in the $w^*-$ topology) such that 

$$\Psi_r=\Psi_{G}.$$

\end{Proposition}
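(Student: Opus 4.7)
The plan is to invoke Theorem \ref{all is DS} to reduce $\Psi_r$ to a DS-functional, then identify the associated generator explicitly and verify that it is in fact a basis of $K^*$.

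For the $(\Rightarrow)$ direction, assume $r\in\operatorname{int}(K)$. The functional $\Psi_r$ is well known to be continuous, sublinear, strictly $K$-monotone and to satisfy the representability property of Definition \ref{d-axiom}. Hence Theorem \ref{all is DS} applies and yields a $w^*$-compact generator $G:=\partial\Psi_r(0)$ of $K^*$ with $\Psi_r=\Psi_G$. I would then compute $G$ explicitly as
\[
\partial \Psi_r(0) \;=\; \{\,y^* \in K^* \;\mid\; \langle y^*, r\rangle = 1\,\}.
\]
For the inclusion ``$\subseteq$'' I would test the subgradient inequality with $y\in -K$ (where $\Psi_r(y)\le 0$) to obtain $y^*\in K^*$, and with $y=\pm r$ (using the identities $\Psi_r(\pm r)=\pm 1$, which follow directly from the definition of $\Psi_r$ together with the pointedness of $K$) to force $\langle y^*, r\rangle = 1$. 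For ``$\supseteq$'' I would observe that for any $y$ and any $t>\Psi_r(y)$ the vector $tr-y$ lies in $K$; applying any $y^*\in K^*$ with $\langle y^*,r\rangle=1$ gives $t\ge \langle y^*,y\rangle$, and letting $t\downarrow \Psi_r(y)$ yields the desired subgradient inequality.

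With this explicit description the basis property becomes immediate. For any $y^*\in K^*\setminus\{0\}$ the standard separation argument combined with $r\in\operatorname{int}(K)$ gives $\langle y^*, r\rangle>0$, so $y^* = \lambda g$ with $\lambda:=\langle y^*, r\rangle>0$ and $g:=y^*/\lambda\in G$. Uniqueness of this representation follows at once from $\langle g, r\rangle=1$ for every $g\in G$, which forces $\lambda=\langle y^*, r\rangle$ in any such decomposition. Together with the generator property already supplied by Theorem \ref{all is DS}, this shows that $G$ is a base of $K^*$ and identifies $\Psi_r$ with $\Psi_G$.

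For the $(\Leftarrow)$ direction, if $\Psi_r=\Psi_G$ for some base $G$ of $K^*$, then in particular $\Psi_r$ is finite-valued on all of $Y$. A direct inspection of the defining formula shows that $\{\,t\in\R\mid tr-y\in K\,\}$ is empty for some $y\in Y$ whenever $r\notin\operatorname{int}(K)$ (in the prototypical case $Y=\R^2$, $K=\R_+^2$, $r\in\operatorname{bd}(K)\setminus\{0\}$, one immediately produces such a $y$); finiteness of $\Psi_r$ therefore forces $r\in\operatorname{int}(K)$. The main technical step I anticipate is the explicit computation of $\partial\Psi_r(0)$; once the hyperplane-slice description is in hand, both directions of the proposition follow without difficulty.
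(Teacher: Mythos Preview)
Your forward direction matches the paper's: both invoke Theorem \ref{all is DS} and identify $\partial\Psi_r(0)$ with the hyperplane slice $\{y^*\in K^*:\langle y^*,r\rangle=1\}$. You derive this formula by hand whereas the paper simply quotes it from \cite{19}; your explicit check of the basis property is also slightly more detailed than the paper's, but the content is the same.

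The converse is where you diverge. You read the biconditional literally (``if $\Psi_r=\Psi_G$ for some basis $G$ then $r\in\operatorname{int}(K)$'') and argue via finiteness of $\Psi_r$. Notice that this argument never uses that $G$ is a \emph{basis} rather than an arbitrary $w^*$-compact generator, and the key step (that $\{t:tr-y\in K\}$ is empty for some $y$ whenever $r\notin\operatorname{int}(K)$) is only illustrated by an $\R^2$ example rather than proved in general. More importantly, the paper is after the statement announced just before the proposition, namely that $\Omega_{GW}$ is exactly the set of DS-functionals attached to bases of $K^*$. Its converse therefore starts from an arbitrary basis $G$, invokes a structure theorem for bases (\cite[Theorem 2.2.12]{24}) to obtain $r\in Y$ with $\langle y^*,r\rangle>0$ for all $y^*\in K^*\setminus\{0\}$ and $G=\{y^*\in K^*:\langle y^*,r\rangle=1\}$, deduces $r\in\operatorname{int}(K)$ from \cite[Lemma 3.21(c)]{9}, and then checks $\Psi_r=\Psi_G$ via the subdifferential formula \eqref{eq:subdif-tammer}. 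This surjectivity onto bases---every basis arises as $\partial\Psi_r(0)$ for some $r\in\operatorname{int}(K)$---is precisely what your finiteness argument does not supply.
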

\begin{proof}

Let $r \in \operatorname{int}(K).$ In virtue of Theorem \ref{all is DS}, we only need to show that $\partial \Psi_r(0)$ is a basis of $K^*.$ The result is then a consequence of Theorem 2.2 in \cite{19}, where it is proved that

\begin{equation}\label{eq:subdif-tammer}
\partial \Psi_r(0)=\{y^* \in K^*: \langle y^*,r \rangle=1\}.
\end{equation}  Conversely, let $G$ be a basis of $K^*$(in the $w^*-$ topology). Because of Theorem 2.2.12 in \cite{24}, there exists $r \in  Y$ such that $\langle y^*,r \rangle>0$ for every $y^* \in K^*\setminus\{0\}$ and

$$G=\{y^*\in K^*: \langle y^*,r \rangle=1\}.$$ Applying now Lemma 3.21 (c) in \cite{9}, we obtain that $r\in \operatorname{int}(K).$ Now, from the subdifferential formula \eqref{eq:subdif-tammer} and the proof of Theorem \ref{all is DS} we have

$$\Psi_r=\Psi_{\partial \Psi_r(0)}=\Psi_{G},$$ as desired. 
\end{proof}

A very important property of the scalarizing functionals in $\Omega_{GW}$ is that of translativity, see \cite[Theorem 2.3.1]{24}. Recall that a functional $\Psi: Y \to \Bbb R$ is said to satisfy the translation property with respect to $r \in \operatorname{int}(K)$ if 

\begin{equation}\label{eq: translativity property}
\forall\; y \in Y, t\in \Bbb R: \;\;\Psi(y +tr) = \Psi(y) +t.
\end{equation}  Next proposition provides another characterization of the class $\Omega_{GW}$ related to this property.

\begin{Proposition}\label{prop: translativity only is GW}

Let Assumption 1 be fulfilled and consider $\Psi_G \in \Omega_{DS},$  $r\in \operatorname{int}(K).$ Then, $\Psi_G$ satisfies the translation property with respect to $r$ if and only if $\Psi_G = \Psi_r \in \Omega_{GW}.$
\end{Proposition}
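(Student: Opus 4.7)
The plan is to prove the biconditional by handling each implication separately. The sufficiency ($\Psi_G = \Psi_r$ implies translativity with respect to $r$) is immediate from \cite[Theorem 2.3.1]{24}, which establishes translativity as a defining property of the Gerstewitz functionals, so I would only briefly invoke it.

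For the necessity, I would exploit the fact that $\Psi_G = \sigma_G$. Setting $y=0$ in the translation identity and using $\sigma_G(0)=0$ yields $\sigma_G(tr) = t$ for every $t \in \mathbb{R}$. Splitting into the cases $t>0$ and $t<0$ and pulling the scalar out of the supremum gives $t \sup_{y^*\in G}\langle y^*,r\rangle = t$ and $t \inf_{y^*\in G}\langle y^*,r\rangle = t$ respectively, so that $\sup_{y^*\in G}\langle y^*,r\rangle = \inf_{y^*\in G}\langle y^*,r\rangle = 1$. Hence $\langle y^*, r\rangle = 1$ uniformly for every $y^* \in G$.

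Next, I would invoke the explicit description of $\partial \Psi_r(0)$ recorded in \eqref{eq:subdif-tammer} to deduce $G = \partial \Psi_r(0)$. The inclusion $G \subseteq \partial \Psi_r(0)$ follows from the step above together with the fact that $G \subseteq K^*$ (which holds because $G$ generates $K^*$). For the reverse inclusion I would use the representation $K^* = \operatorname{cone}(G)$: any $y^* \in \partial \Psi_r(0)$ can be written as $y^* = tg$ with $t \geq 0$ and $g \in G$, and testing on $r$ yields $1 = \langle y^*, r\rangle = t \langle g, r\rangle = t$, so $y^* = g \in G$.

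Combining $G = \partial \Psi_r(0)$ with the identity $\Psi_r = \sigma_{\partial \Psi_r(0)}$ (which is the sublinear-case consequence of Proposition \ref{properties of convex}(iii) applied at $\bar y = 0$) gives $\Psi_G = \sigma_G = \sigma_{\partial \Psi_r(0)} = \Psi_r$, and membership in $\Omega_{GW}$ is automatic from the definition of $\Psi_r$. The main obstacle is the uniformity of the value $\langle y^*, r\rangle = 1$ across all of $G$; this is where both signs of $t$ in the translativity relation are essential, since only the combination of the sup and inf calculations forces $G$ to lie in the affine slice $\{y^*\in K^*: \langle y^*,r\rangle = 1\}$ rather than merely below it.
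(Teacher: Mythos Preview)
Your proposal is correct and follows essentially the same route as the paper: both arguments evaluate the translation identity at $y=0$ with both signs of $t$ to force $\langle y^*,r\rangle=1$ uniformly on $G$, then identify $G$ with $\partial\Psi_r(0)$ via \eqref{eq:subdif-tammer} and conclude using $\Psi_r=\sigma_{\partial\Psi_r(0)}$. You are slightly more explicit than the paper in spelling out the reverse inclusion $\partial\Psi_r(0)\subseteq G$ (via $K^*=\operatorname{cone}(G)$ and testing on $r$) and in citing \cite[Theorem 2.3.1]{24} for the sufficiency direction, but these are cosmetic differences rather than a different approach.
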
 
\begin{proof}
By substituting $y= 0$ and $t= \pm 1$  in \eqref{eq: translativity property}, we get $\sigma_G(r)=1$ and $\sigma_G(-r) = -1$ respectively. The definition of the support function now implies 

$$\forall\; g^* \in G:\langle g^*,r\rangle \leq 1, \; \; \langle g^*,-r\rangle \leq -1.$$ From this, it can only be

$$\forall\; g^* \in G:\langle g^*,r\rangle=1.$$ Since $G$ is a generator of $K^*,$ we deduce that 

$$G = \{y^* \in K^*: \langle g^*,r\rangle=1\}.$$ Consider now the functional $\Psi_r.$  According to Proposition \ref{properties of convex} $(iii)$ and \eqref{eq:subdif-tammer}, we then obtain 

$$\Psi_r = \sigma_{\partial \Psi_r(0)} = \sigma_G = \Psi_G,$$ as desired.  
\end{proof}

\begin{Remark}
Proposition \ref{prop: translativity only is GW} means that $\Omega_{GW}$ is exactly the set of elements in $\Omega_{DS}$ that satisfy the translation property. It is worth to point out that it was recently established in \cite[Lemma 3.2]{GaoYang} that an element $\Psi_{\|\cdot\|} \in \Omega_{HU}$ satisfies the translation property with respect to $r \in K,$ provided that $d(r,- K) = d(-r, Y \setminus -K) =1.$ However, it turns out that this condition already implies that $\Psi_{\|\cdot\|} = \Psi_r \in \Omega_{GW}.$  Indeed, from the condition we deduce that $r \in \operatorname{int}(K).$ Then, the statement follows from Theorem \ref{all is DS} and Proposition \ref{prop: translativity only is GW}.

\end{Remark}
In the rest of this section, we address the question of the relations between the classes $\Omega_{GW}$ and $\Omega_{HU}.$ The following lemma presents a computation of the Fenchel subdifferential of the HU- functional $\Psi :Y \rightarrow \mathbb{R}$ given by (\ref{funcHU}) at the point $y=0$. For a finite dimensional version of this result, see \cite[Theorem 4.2]{Coulibaly}. For general characterizations (for the case that $K=A$ and $A$ is a subset of $Y$ without convexity assumptions concerning the involved set $A$) of the approximate subdifferential by Ioffe of $\Psi$, see \cite[Proposition 21.11]{Ha10-2}. 

\begin{Lemma}\label{subdif- hu}
Suppose that Assumption 1 is fulfilled. Consider the HU- functional associated to K given by $$\Psi_{\|\cdot \|}(y)= d(y,-K)-d(y,Y\setminus -K).$$ Then, we have:

$$\partial \Psi_{\|\cdot \|} (0)= \overline{\operatorname{conv}}^*\left(K^*\cap S\right),$$ where $$S=\{y^*\in Y^*: \|y^*\|_*=1\},$$ is the unit sphere in the dual space.
\end{Lemma}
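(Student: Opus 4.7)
The plan is to identify $\Psi_{\|\cdot\|}$ directly with the support function of $K^* \cap S$ and then read off $\partial \Psi_{\|\cdot\|}(0)$ from the uniqueness of the representation of a $w^*$-closed convex set by its support function. As the signed distance to the closed convex cone $-K$, the functional $\Psi_{\|\cdot\|}$ is sublinear, continuous, and $1$-Lipschitz, so by Proposition \ref{properties of convex} (iii) it equals $\sigma_{\partial \Psi_{\|\cdot\|}(0)}$. Since a support function coincides with that of the $w^*$-closed convex hull of its underlying set, the lemma reduces to establishing the pointwise identity
$$\Psi_{\|\cdot\|}(y) = \sigma_{K^* \cap S}(y), \qquad y \in Y.$$

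For the inequality $\sigma_{K^* \cap S} \leq \Psi_{\|\cdot\|}$, I would fix $y^* \in K^* \cap S$ and split on whether $y \in -K$. When $y \notin -K$, the bound $\langle y^*, z\rangle \leq 0$ for $z \in -K$ together with $\|y^*\|_* = 1$ yields $\langle y^*, y\rangle \leq \|y - z\|$; taking the infimum over $z$ gives $\langle y^*, y\rangle \leq d(y, -K) = \Psi_{\|\cdot\|}(y)$. When $y \in -K$, the goal becomes $d(y, Y \setminus -K) \leq -\langle y^*, y\rangle$; writing $a := -\langle y^*, y\rangle \geq 0$, for each $\epsilon > 0$ I use $\|y^*\|_* = 1$ to pick a unit vector $w_\epsilon$ with $\langle y^*, w_\epsilon\rangle > a/(a+\epsilon)$, set $z_\epsilon := y + (a+\epsilon)\, w_\epsilon$, verify $\langle y^*, z_\epsilon\rangle > 0$, and conclude $z_\epsilon \notin -K$ since $-K \subseteq \{u : \langle y^*, u\rangle \leq 0\}$; the equality $\|y - z_\epsilon\| = a + \epsilon$ then yields the desired bound in the limit $\epsilon \to 0$.

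For the reverse inequality $\Psi_{\|\cdot\|} \leq \sigma_{K^* \cap S}$, the case $y \notin -K$ follows from the classical duality $d(y, -K) = \sup\{\langle y^*, y\rangle : y^* \in K^*,\; \|y^*\|_* \leq 1\}$, which coincides with $\sigma_{K^* \cap S}(y)$ because $d(y, -K) > 0$. For $y \in \bd(-K)$ the supporting hyperplane theorem, applicable since $\operatorname{int}(-K) \neq \emptyset$, produces $y^* \in K^* \cap S$ with $\langle y^*, y\rangle = 0 = \Psi_{\|\cdot\|}(y)$. The remaining case $y \in \operatorname{int}(-K)$ I handle as follows: with $r := d(y, Y \setminus -K) > 0$, for each $\epsilon > 0$ I take an exterior point within $r + \epsilon$ of $y$, intersect the segment from $y$ to it with $\bd(-K)$ to obtain $\hat z_\epsilon$ satisfying $\|y - \hat z_\epsilon\| < r + \epsilon$, then support $-K$ at $\hat z_\epsilon$ to produce $y^* \in K^* \cap S$ with $\langle y^*, \hat z_\epsilon\rangle = 0$; the estimate $\langle y^*, y\rangle \geq -\|y - \hat z_\epsilon\|$ followed by $\epsilon \to 0$ finishes the argument. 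The main obstacle is precisely this interior case, which requires in one direction a normed-space construction of an exterior point aligned with a nearly norm-attaining direction of $y^*$, and in the other a careful choice of boundary point of $-K$ at which to invoke the supporting hyperplane theorem; the remaining cases reduce to the sublinear structure of $\Psi_{\|\cdot\|}$ or to standard distance duality for closed convex cones.
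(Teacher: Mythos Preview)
Your argument is correct and proceeds along a genuinely different route from the paper's. The paper does not compute $\Psi_{\|\cdot\|}$ directly as a support function; instead it invokes two external results---Hiriart-Urruty's decomposition $\partial \Psi_{\|\cdot\|}(0)=\partial\mu(0)\cap\Bbb B(0,1)$ (where $\mu$ is the restriction of $-d(\cdot,Y\setminus -K)$ to $-K$) and Briec's dual formula for the distance to a complement---to reduce the problem to showing that the set $[1,+\infty)\cdot\overline{\operatorname{conv}}^*(K^*\cap S)$ exhausts $\partial\mu(0)$, which is then handled by a separation argument in $Y^*$ (using Lemma~\ref{wcomp} to get $w^*$-closedness). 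Your approach is more self-contained: you work entirely in the primal space, establishing $\Psi_{\|\cdot\|}=\sigma_{K^*\cap S}$ pointwise via the four cases and then reading off the subdifferential from H\"ormander's theorem. The paper's route has the advantage of isolating the identity $-d(y,Y\setminus -K)=\sigma_{K^*\cap S}(y)$ on $-K$ as a separate fact (its Claim~1), which may be of independent interest; your route avoids the dependence on \cite{3} and \cite{1} and replaces the dual-space separation by a direct supporting-hyperplane construction at boundary points of $-K$, which is arguably more transparent in the interior case.
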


\begin{proof}
 Let $\mu: Y \to \mathbb{R}$ be defined as 

$$\mu(y):=  \left\{  \begin{array}{ll}
-d(y, Y\setminus -K), &  \textrm{ if } y\in -K,\\ +\infty, &  \textrm{ if } y\notin -K.
\end{array} \right\}.$$ By Proposition 5 in \cite{3}, we have

\begin{equation}\label{eq:subdifferential of HU decomposed}
\partial \Psi_{\|\cdot \|}(0)= \partial \mu(0)\cap \Bbb B(0,1).
\end{equation} From Proposition 3.1 in \cite{1}, we also have

$$d(y,Y\setminus -K)=\inf_{\|y^*\|_* \geq 1} \left\{\sigma_{(-K)}(y^*)-\langle y^*, y\rangle\right\}.$$ From this, we deduce that

$$-d(y,Y\setminus -K)=\sup_{\|y^*\|_* \geq 1} \left\{\langle y^*, y\rangle-\sigma_{(-K)}(y^*)\right\}.$$  Since $K$ is a cone, it is easy to verify that 

$$\sigma_{(-K)}= \delta_{K^*},$$ the indicator function of $K^*.$ Hence 

$$-d(y,Y\setminus -K)= \sigma_G(y),$$ where $G:= K^*\cap \{y^* \in Y^*:\|y^*\|_* \geq 1\}.$\\
\begin{itemize}
\setlength\itemsep{1em}
\renewcommand{\labelitemi}{\scriptsize$\blacksquare$} 
\item \underline{Claim 1:} For every $y\in -K,$  it holds $-d(y,Y\setminus -K)=\sigma_{\left( K^*\cap S\right)}(y).$\\

Indeed, since $K^*\cap S \subseteq G,$ we obviously have 

$$-d(y,Y\setminus -K)\geq \sigma_{\left( K^*\cap S\right)}(y).$$ Now choose any $y^* \in G$ and $ y\in -K.$ Then, $\langle y^*, y\rangle\leq 0$ and hence 

$$(\|y^*\|_* -1)\langle y^*, y\rangle\leq 0.$$ This implies in particular that 

$$\frac{1}{\|y^*\|_*} y^* \in K^*\cap S \textrm{ and }  \frac{\langle y^*, y\rangle}{\|y^*\|_*}\geq \langle y^*, y\rangle,$$ so our claim is true. 
\end{itemize}
Let 

$$D:= \overline{\operatorname{conv}}^*\left(K^*\cap S\right).$$ Taking into account  \eqref{eq:subdifferential of HU decomposed} and Claim 1 just proved, we have $\bar{y}^* \in \partial \Psi_{\|\cdot \|}(0)$ if and only if $\|\bar{y}^*\|_*\leq 1$ and 

\begin{equation}\label{eq: aux}
\forall\; y\in -K: \langle \bar{y}^*, y \rangle\leq \sigma_{(K^*\cap S)}(y).
\end{equation} By convexity and the $w^*-$ closedness of $\partial \Psi(0)$, it is easy to verify that 

$$D \subseteq \partial \Psi_{\|\cdot \|}(0).$$ In order to finish the proof, we only need to show that the reverse inclusion also holds. 

Assume otherwise. Then there is a 

$$\bar{y}^* \in \partial \Psi_{\|\cdot \|}(0)\setminus D.$$ 

\begin{itemize}
\setlength\itemsep{1em}
\renewcommand{\labelitemi}{\scriptsize$\blacksquare$} 
\item \underline{Claim 2:} $\bar{y}^* \notin [1,+\infty) D.$\\

Since $\bar{y}^* \notin D,$ by Theorem \ref{t - separationtheorem} (ii) we find $\hat{y} \in Y$ such that 

\begin{equation}\label{eq: aux claim [1, inf]D }
\langle \bar{y}^* ,\hat{y} \rangle < \inf_{d^* \in D} \{\langle d^*,\hat{y} \rangle\}.
\end{equation} Taking into account \eqref{eq: aux claim [1, inf]D } and the fact that $\frac{1}{\|\bar{y}^*\|} \bar{y}^* \in D,$ we find that $\langle \bar{y}^* ,\hat{y}\rangle < \frac{1}{\|\bar{y}^*\|} \langle \bar{y}^* ,\hat{y} \rangle .$ Equivalently, we have  

\begin{equation}\label{eq: aux 2 claim [1, inf]D}
(\|\bar{y}^*\|-1)\langle \bar{y}^* ,\hat{y} \rangle<0.
\end{equation} Because of \eqref{eq:subdifferential of HU decomposed} and the fact that $\bar{y}^* \notin D,$ we obtain $\|\bar{y}^*\|_* <1.$ From this and  \eqref{eq: aux 2 claim [1, inf]D}, we get $\langle \bar{y}^* ,\hat{y} \rangle>0.$ Hence, we get

$$0<\langle \bar{y}^* ,\hat{y} \rangle < \inf_{d^* \in D} \{\langle d^*,\hat{y} \rangle\} =  \inf_{d^* \in [1,+\infty) D} \{\langle d^*,\hat{y} \rangle\} .$$ In particular, this implies that $\bar{y}^* \notin [1,+\infty) D,$ and the claim is proved.

\end{itemize} Now, note that $0 \notin D$. Otherwise, we would have $0\in \partial \Psi_{\|\cdot \|}(0) $ and  

$$\forall y \in Y: \quad \langle \bar{y}^*, y \rangle\geq 0 ,$$ which is a contradiction since $K$ is solid and $\langle \bar{y}^*, y \rangle<0$ for each $y\in -\operatorname{int}(K).$ Applying Lemma \ref{wcomp} with $J=[1,+\infty),$ we obtain that the set $[1,+\infty) D$ is $w^*-$ closed (and convex). By Theorem \ref{t - separationtheorem} (ii), we now find  $\bar{y} \in Y$ such that 

$$\langle \bar{y}^*, \bar{y}\rangle> \sigma_{([1,+\infty) D)}  (\bar{y}).$$ This implies that $\langle y^*, \bar{y}\rangle\leq 0$ for each $y^* \in D;$ otherwise, we would have $t y^* \in D$ for every $t\geq 1$ and hence

$$\langle \bar{y}^*, \bar{y}\rangle > \sigma_{([1,+\infty) D)}  (\bar{y})\geq t \langle y^*, \bar{y}\rangle >0. $$ By letting $t \to + \infty$, the right member of this inequality goes to $+\infty$ and we obtain a contradiction. Since $D$ generates $K^*,$ we have $\bar{y}\in -K.$ Hence, 

$$\langle \bar{y}^*, \bar{y}\rangle> \sigma_{([1,+\infty) D)}  (\bar{y}) \geq \sigma_{ D}  (\bar{y})\geq \sigma_{(K^*\cap S)}(\bar{y}),$$ a contradiction to \eqref{eq: aux}. This completes the proof. 
\end{proof}

Now we can establish the hypothesis that guarantee that $\Omega_{GW} \subseteq \Omega_{HU}.$ A result like Theorem \ref{WG es HU} below was first stated in \cite{5}, and later in \cite{4} in the context of set optimization, under a similar argument. 

\begin{Theorem}\label{WG es HU}

Let Assumption 1 be fulfilled, take $r \in \operatorname{int}(K)$ and consider the corresponding element $\Psi_r \in \Omega_{GW},$ i.e,

$$\Psi_r(y)= \inf\{t\in \mathbb{R}: tr\in y+K\}.$$ Furthermore, assume that either $Y$ is reflexive, or that $K$ is normal. Then, there exists  a norm $\|\cdot\|'$ in $Y$ such that:
\begin{itemize}
\setlength\itemsep{1em}
\renewcommand{\labelitemi}{\scriptsize$\blacksquare$} 
\item $\|\cdot\|'$ is equivalent to $\|\cdot\|,$
\item  $\Psi_{\|\cdot\|'}=\Psi_r.$
\end{itemize}

\end{Theorem}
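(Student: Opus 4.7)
The plan is to exhibit an equivalent norm $\|\cdot\|'$ on $Y$ whose associated Hiriart-Urruty functional coincides with $\Psi_r$. A natural candidate is the order-symmetrization of $\Psi_r$, namely
$$\|y\|' := \max\bigl\{\Psi_r(y),\, \Psi_r(-y)\bigr\},$$
equivalently the Minkowski functional of the order interval $[-r,r]_K := (r-K)\cap(-r+K)$. I would first verify that $\|\cdot\|'$ is indeed a norm: positive homogeneity and the triangle inequality transfer from those of $\Psi_r$ (positive homogeneity and subadditivity), while $\|y\|'=0$ forces $\Psi_r(y) \le 0$ and $\Psi_r(-y)\le 0$, so by the representability property $y \in -K$ and $-y \in -K$, whence $y = 0$ by pointedness of $K$.

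The key technical issue is to establish $\|\cdot\|' \asymp \|\cdot\|$. The bound $\|y\|' \leq C_1\|y\|$ is immediate from the continuity of the sublinear functional $\Psi_r$. For the reverse estimate, it suffices to show that the unit ball $[-r,r]_K$ of $\|\cdot\|'$ is norm-bounded. If $K$ is normal, this is the standard equivalent characterization that order intervals are bounded. In the reflexive case, I would invoke the known structural fact that a closed, pointed, solid convex cone in a reflexive Banach space is automatically normal; in either situation one derives $\|y\| \leq C_2 \|y\|'$ on the order interval and hence globally by homogeneity.

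With the equivalence of the two norms in hand, I would apply Lemma \ref{subdif- hu} to $\Psi_{\|\cdot\|'}$ taking $\|\cdot\|'$ as the underlying norm, obtaining
$$\partial \Psi_{\|\cdot\|'}(0) = \overline{\operatorname{conv}}^{*}\bigl(K^* \cap S'\bigr),$$
where $S'$ denotes the unit sphere of $(Y^*,\|\cdot\|'_*)$. The decisive calculation is that, for $y^*\in K^*$,
$$\|y^*\|'_* = \sup\bigl\{\langle y^*, y\rangle : y \in [-r,r]_K\bigr\} = \langle y^*, r\rangle,$$
the supremum being attained at $y=r$ by the $K^*$-positivity of $y^*$ combined with $r \in [-r,r]_K$. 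Consequently $K^*\cap S' = \{y^* \in K^*: \langle y^*, r\rangle = 1\}$, and this set is already convex and $w^*$-compact (boundedness follows from $r \in \operatorname{int}(K)$ via the estimate $\|y^*\|_* \leq \langle y^*, r\rangle/\epsilon$ for $B(r,\epsilon)\subseteq K$), so it coincides with its $w^*$-closed convex hull.

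Combining with formula \eqref{eq:subdif-tammer}, this yields $\partial \Psi_{\|\cdot\|'}(0) = \partial \Psi_r(0)$, and Proposition \ref{properties of convex}(iii) then delivers $\Psi_{\|\cdot\|'} = \sigma_{\partial \Psi_{\|\cdot\|'}(0)} = \sigma_{\partial \Psi_r(0)} = \Psi_r$. The main obstacle I anticipate is precisely the reflexive case of the norm equivalence: the passage from reflexivity to normality (or directly to boundedness of $[-r,r]_K$) is delicate, and the naive weak-compactness argument on a normalized divergent sequence $z_n = y_n/\|y_n\|$ only produces a weak limit $z_\infty \in K \cap (-K)=\{0\}$, which is not by itself contradictory in infinite dimensions and must be supplemented by a more refined structural ingredient.
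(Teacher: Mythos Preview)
Your argument for the normal case is essentially the paper's Case 2: the norm built from the Minkowski functional of the order interval $I=(r-K)\cap(-r+K)$, followed by the identification $K^*\cap S' = \{y^*\in K^*:\langle y^*,r\rangle=1\}=\partial\Psi_r(0)$ and an appeal to Lemma \ref{subdif- hu}. That part is fine.

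The gap is in the reflexive case. The ``known structural fact'' you invoke---that a closed, pointed, solid convex cone in a reflexive Banach space is automatically normal---is false. In a separable Hilbert space with orthonormal basis $(e_n)_{n\ge 0}$, take any positive sequence $\alpha_n\to 0$ and set
\[
K=\Bigl\{y:\ \langle y,e_0\rangle \ge \bigl(\textstyle\sum_{n\ge 1}\alpha_n\langle y,e_n\rangle^2\bigr)^{1/2}\Bigr\}.
\]
This cone is closed, convex, pointed, and has $e_0\in\operatorname{int}(K)$; yet $\alpha_n^{-1/2}e_n\in[-e_0,e_0]_K$ for every $n\ge 1$, so the order interval is unbounded and $K$ is not normal. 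Your own closing remark already identifies why the naive weak-compactness argument fails, and no ``more refined structural ingredient'' will rescue it, because the conclusion you want is simply not true.

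The paper circumvents this by abandoning the order-interval construction in the reflexive case and working directly in the dual. It sets $B=\partial\Psi_r(0)$, takes $M\ge\sup_{b^*\in B}\|b^*\|_*$ (finite since $B$ is $w^*$-compact, hence norm-bounded in a Banach space), and defines
\[
U=\{y^*\in Y^*:\ |\langle y^*,r\rangle|\le 1\}\cap \Bbb B(0,M).
\]
The Minkowski functional $\rho_U$ is then a norm on $Y^*$ equivalent to $\|\cdot\|_*$, engineered so that $K^*\cap\{\rho_U=1\}=B$. Reflexivity is used only at the very end, to guarantee that the predual norm $\|y\|'=\sup_{y^*\neq 0}|\langle y^*,y\rangle|/\rho_U(y^*)$ on $Y$ satisfies $\|\cdot\|'_*=\rho_U$ exactly; Lemma \ref{subdif- hu} and Theorem \ref{all is DS} then finish as in your argument. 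The point is that boundedness is imposed on the dual side by intersecting with $\Bbb B(0,M)$, rather than hoped for on the primal side.
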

\begin{proof}

The proof will be divided in two cases: one for the reflexivity of $Y,$ and the other one for the normality of $K.$\\

\underline{Case 1:} $Y$ is reflexive.\\
Let

$$V:= \{y^* \in Y^*: |\langle y^*,r\rangle|\leq 1\}$$ and let $B:= \partial \Psi_r(0),$ that is, 

$$B = \{y^*\in Y^*: \langle y^*,r\rangle=1\}.$$ We have that $B$ is $w^*$- compact since it is the subdifferential of the convex and continuous function $\Psi_r$ at $0$. Since $Y$ is a Banach space, as a consequence of the Uniform Boundedness Principle, we must have that $B$ is norm- bounded and so, there exists $M>0$ such that 

$$\|y^*\|_*\leq M \; \textrm{for each }y^* \in B.$$  Define the set 

$$U:= V\cap \Bbb B(0,M).$$ It is easy to see that $U$ is a convex, balanced neighborhood of $0.$ In Figure 1 we illustrate this construction.

Now consider the Minkowski functional associated to $U,$ that is,

$$\rho_U(y^*):=\inf \{t> 0: y^* \in tU\}.$$ By construction, it follows that $\rho_U$ is a norm in $Y^*$ equivalent to $\|\cdot\|_*.$ Let 

$$S:= \{y^* \in Y^*: \rho_U(y^*)=1\}$$ be the unit sphere in $Y^*$ with respect to the norm $\rho_U.$ We claim that 

\begin{equation}\label{eq:identityG*}
B= S\cap K^*.
\end{equation} Indeed, take any $y^* \in B.$ Then, obviously, $y^* \in K^*\cap U$ and hence $\rho_U(y^*)\leq 1.$ If this inequality is strict, then there must be a $t>1$ such that 

$$ty^* \in U,$$ but, then, we would have $$1\geq t\langle y^*,r\rangle=t>1,$$ a contradiction. Hence $\rho_U(y^*)= 1,$ which implies that

$$B\subseteq S\cap K^*.$$ Now take $y^*\in S\cap K^*.$ Since $r\in \operatorname{int}(K),$ we must have $\langle y^*,r\rangle>0.$ Hence 

$$\frac{1}{\langle y^*,r\rangle} y^*\in B \subseteq S.$$ From here we find that 

$$\rho_U \left(\frac{1}{\langle y^*,r\rangle} y^*\right)=1,$$ and so $\langle y^*,r\rangle=1,$ or equivalently, $y^*\in B.$ This proves that 

$$S\cap K^* \subseteq B,$$ and then \eqref{eq:identityG*} is true.\\

Finally, let us define $\|\cdot\|': Y\longrightarrow \mathbb{R}$ by

$$\|y\|'= \sup_{y^*\neq 0^*}\frac{|\langle y^*,y\rangle|}{\rho_U(y^*)}.$$ Then it is easy to check that $\|\cdot\|'$ is equivalent to $\|\cdot\|$ and that $\|\cdot\|'_*=\rho_U$ (this last part is a consequence of reflexivity).  Consider now $\Psi_{\|\cdot\|'}.$ In virtue of Lemma \ref{subdif- hu}, the constructions so far and the fact that $B$ is $w^*-$ closed and convex, we find 

$$\partial\Psi_{\|\cdot\|'}(0)= \overline{\operatorname{conv}}^*(K^*\cap S)= \overline{\operatorname{conv}}^*(B)= B.$$ The result of the theorem follows now from Theorem \ref{all is DS}.\\

\underline{Case 2:} $K$ is normal.\\

Let $I:= (r-K)\cap (-r+K)$ be the order interval associated to $r.$ Because of the normality of $K,$ we have that $I$ is a bounded set (in $\|\cdot\|$) that contains $0$ in its interior, see \cite{30}. Since $K$ is closed and convex, it follows that $I$ is also closed and convex. We consider now the Minkowski functional associated to this set, i.e, 

$$\|y\|'= \inf \{t\geq 0: y\in tI\}.$$ The boundedness of $I$ implies that $\|\cdot\|'$ is a norm in $Y$ equivalent to $\|\cdot\|.$ Next, we claim that 

\begin{equation}\label{eq: gw subd = unit sphere}
S:=\{y^*\in K^*: \|y^*\|_*'=1 \}= \partial \Psi_r(0).
\end{equation} Indeed, take $y^* \in \partial \Psi_r(0).$ From \eqref{eq:subdif-tammer}, we have $y^* \in K^*$ and $\langle y^*,r \rangle=1.$ Now, because of the closedness of $K,$ for any $y \in Y$ with $\|y\|'=1 $ we have $y \in I.$ This means that $r-y\in K,\; y+r\in K.$ Since $y^*\in K^*,$ it follows that 

$$-1= \langle y^*,-r\rangle \leq \langle y^*, y\rangle \leq \langle y^*,r \rangle=1,$$ or equivalently, $|\langle y^*,y \rangle|\leq 1.$ Since $|\langle y^*,r \rangle|= 1,\; \|r\|'=1$ and $y$ was arbitrarily chosen with $\|y\|'=1,$ we obtain $\|y^*\|_*'=1.$ 

Conversely, assume that $y^*\in K^*,$ with $\|y^*\|_*'=1.$ Then, we can find a sequence $\{y_n\}$ in $Y$ such that $\|y_n\|'=1$ for all $n\in \Bbb N$ and $\langle y^*, y_n \rangle \to 1.$ Again, by the closedness of $K,$ we have $\{y_n\}\subseteq  I.$ Since $y^* \in K^*,$ we must have 

$$\langle y^*,y_n \rangle \leq \langle y^*, r \rangle \leq \|y^*\|_*' \|r\|'= 1 $$ and, by letting $n\to +\infty,$ we obtain $\langle y^*, r \rangle=1,$ as desired.

Now, because of \eqref{eq: gw subd = unit sphere} and Lemma \ref{subdif- hu}, we find that 

$$\partial\Psi_{\|\cdot\|'}(0)= \overline{\operatorname{conv}}^*(K^*\cap S)= \overline{\operatorname{conv}}^*(\partial \Psi_r(0))= \partial \Psi_r(0).$$ The result follows. 
\end{proof}

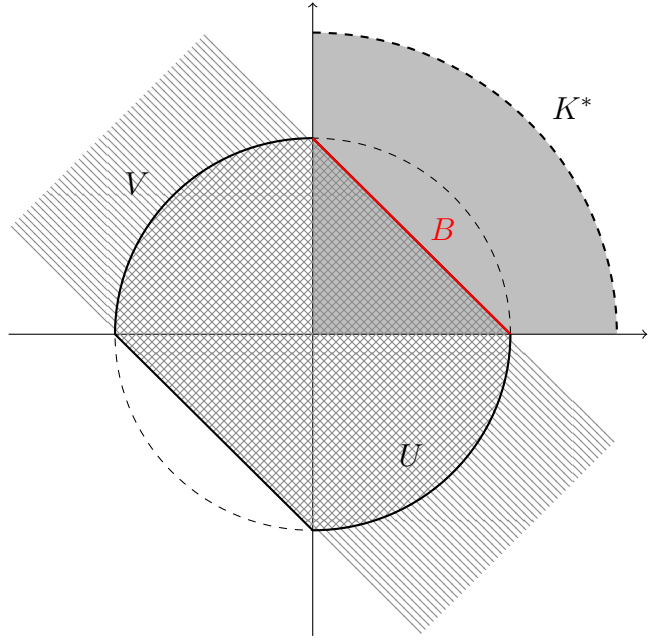
\begin{figure}
\centering
\caption{Geometrical construction in the proof of Theorem \ref{WG es HU}, Case 1}
\begin{tikzpicture}[scale=2]

% cone 
\path [fill= lightgray, dashed] (0,0)-- (90:2) arc (90:0:2)-- (0,0);
\draw[dashed, thick] (90:2) arc (90:0:2);
%\path [fill= lightgray, dashed] (0,0)-- (270:2) arc (180:270:2)-- (0,0);

% axes
\draw[thin,->] (-2,0) -- (2.2,0)node[anchor=north,text width=0.6cm]{} ;
\draw[thin,->] (0,-2) -- (0,2.2)node[anchor=north, text width=0.6cm]{};
\node [right] at (1.5,1.5) {$K^*$};

%basis
\draw[red, thick] (1.3,0) -- (0,1.3);

\node [red, right] at (0.7, 0.7) {$B$};

% define V
\path[ pattern=north west lines, pattern color=black!45] (2,-0.7)-- (-0.7,2)--(-2,0.7)-- (0.7,-2)--cycle;
\node [left] at (-1,1) {$V$};

%boundedness
\draw[dashed] (0,0) circle (1.3cm);

% define U
\path[thick, pattern=north east lines, pattern color=black!45] (1.3,0) arc (0:-90:1.3cm)-- (-1.3,0) arc (180:90:1.3)-- (1.3,0);
\draw[thick,black] (1.3,0) arc (0:-90:1.3cm)-- (-1.3,0) arc (180:90:1.3)-- (1.3,0);
\node [left] at (0.8,-0.8) {$U$};

\draw[red, thick] (1.3,0) -- (0,1.3);
%\path [fill= lightgray, dashed] (0,0)-- (90:2) arc (90:0:2)-- (0,0);

%\draw[pattern=north west lines, pattern color=red] (0,0) -- (2,4)-- (0,1)--(0,0);
%\path [fill= lightgray, dashed] (0,0)-- (90: 2) --  (0 :2)-- (0,0);

%\node [below] at (0,-0.5) {$X^*$};

\end{tikzpicture}
\end{figure}

In Theorem \ref{all is DS} and Theorem \ref{WG es HU} we have shown, under mild assumptions, inclusions between the classes $ \Omega_{GW}, \Omega_{HU}$ and $\Omega_{DS}.$ The following example illustrates that these inclusions are strict in general, i.e, it is possible to have $$\Omega_{GW}\subset \Omega_{HU}\subset  \Omega_{DS}.$$

\begin{Example}\label{ex: inclusions are strict}
Let $Y= \Bbb R^2$ and $ K = \Bbb R^2_+,$ so that $K^* = \Bbb R^2_+.$ Let $\|\cdot\|_2$ and $S$ denote the Euclidean norm and the unit sphere (with respect to $\|\cdot\|_2$) respectively. Next, consider the sets

$$G_1:= \operatorname{conv} (S\cap K^*), \; \; G_2:= G_1 \cup \operatorname{conv}\left(\left\{\begin{pmatrix}
0\\1
\end{pmatrix}, \begin{pmatrix}
1\\0
\end{pmatrix}, \begin{pmatrix}
\frac{1}{2}\\ \frac{1}{2}\end{pmatrix}\right\}\right).$$ Obviously, $G_1$ and $G_2$ are compact generators of $K^*$ and hence $\sigma_{G_1}, \sigma_{G_2}\in \Omega_{DS}.$ 

\underline{Claim:} $\sigma_{G_1} \in \Omega_{HU} \setminus \Omega_{GW}.$

According to Lemma \ref{subdif- hu} and Theorem \ref{all is DS}, it is easy to see that $\sigma_{G_1} \in \Omega_{HU}.$ Assume that $\sigma_{G_1} \in \Omega_{GW}.$ Then, according to the formula for subdifferentials at $0$ of Gerstewitz- Weidner functionals \eqref{eq:subdif-tammer}, we could find an element  $r\in \operatorname{int} (K)$ such that $G_1= \{y^* \in \Bbb R^2_+ : \; \langle y^*, r \rangle =1\}.$ Since the interior of $G_1$ is nonempty, this would be a contradiction.

\underline{Claim:} $\sigma_{G_2} \in \Omega_{DS} \setminus \Omega_{HU}.$

Since we already know that $\sigma_{G_2} \in \Omega_{DS},$ it remains to show that $\sigma_{G_2} \notin \Omega_{HU}.$ Assume otherwise. Then, we can find a norm $\|\cdot\|'$ equivalent to $\|\cdot\|_2$ such that $\sigma_{G_2}= \Psi_{\|\cdot\|'}.$ According to Lemma \ref{subdif- hu}, we now have 

$$G_2 = \partial \Psi_{\|\cdot\|'}(0) = \operatorname{conv}(S' \cap \Bbb R^2_+),$$ were $S'$ is the unit sphere in $\Bbb R^2$ with respect to $\|\cdot\|'_*.$ Let $v:= \begin{pmatrix}
\frac{1}{2}\\ \frac{1}{2}\end{pmatrix}.$ Because $(S \cap \Bbb R^2_+) \cup \{v\}$ are extreme points of $G_2,$ it follows that $(S \cap \Bbb R^2_+) \cup \{v\} \subseteq S' \cap \Bbb R^2_+.$ From this we deduce that $v \in S'$ and $\sqrt{2}v\in S \cap \Bbb R^2_+ \subseteq S',$ a contradiction.
\end{Example}

Taking into account Theorem \ref{all is DS}, Theorem \ref{WG es HU} and Example \ref{ex: inclusions are strict}, we get the following corollary.

\begin{Corollary}

Let Assumption 1 be fulfilled with either $Y$ reflexive, or $K$ normal. Then,

$$\Omega_{GW}\subseteq \Omega_{HU}\subseteq \Omega_{DS}$$ and these inclusions are, in general, strict.

\end{Corollary}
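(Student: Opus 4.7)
The corollary is essentially a direct assembly of the three preceding results, so my plan is to point to each of them for the appropriate piece. The two inclusions come from Theorem \ref{all is DS} and Theorem \ref{WG es HU} respectively, while strictness is witnessed by Example \ref{ex: inclusions are strict}. Since none of the three components relies on the reflexivity/normality hypothesis except Theorem \ref{WG es HU}, this assumption is invoked only in the middle step.

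For the inclusion $\Omega_{HU}\subseteq\Omega_{DS}$, I would take an arbitrary $\Psi_{\|\cdot\|'}\in\Omega_{HU}$ and observe that, as noted just before Theorem \ref{all is DS}, it is continuous, sublinear and satisfies the monotonicity and representability properties of Definition \ref{d-axiom}. Theorem \ref{all is DS} then produces a $w^*$-compact generator $G=\partial\Psi_{\|\cdot\|'}(0)$ of $K^*$ with $\Psi_{\|\cdot\|'}=\Psi_G\in\Omega_{DS}$. For the inclusion $\Omega_{GW}\subseteq\Omega_{HU}$, given any $r\in\operatorname{int}(K)$ I would simply invoke Theorem \ref{WG es HU} — which is exactly where the hypothesis that $Y$ is reflexive or $K$ is normal enters — to produce an equivalent norm $\|\cdot\|'$ with $\Psi_r=\Psi_{\|\cdot\|'}\in\Omega_{HU}$.

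For strictness of both inclusions, I would appeal to Example \ref{ex: inclusions are strict}, where in $Y=\R^2$ with $K=\R^2_+$ (which is both reflexive and normal, so our hypothesis holds) the functionals $\sigma_{G_1}$ and $\sigma_{G_2}$ witness $\Omega_{HU}\setminus\Omega_{GW}\neq\emptyset$ and $\Omega_{DS}\setminus\Omega_{HU}\neq\emptyset$ respectively. There is no real obstacle here; the work has already been done, and the only point worth stating explicitly is that each of the conclusions of Theorem \ref{all is DS}, Theorem \ref{WG es HU} and the example is an equality or a membership statement that chains together without any further hypotheses.
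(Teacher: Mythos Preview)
Your proposal is correct and matches the paper's own reasoning exactly: the paper simply states that the corollary follows by taking into account Theorem \ref{all is DS}, Theorem \ref{WG es HU} and Example \ref{ex: inclusions are strict}, which is precisely the assembly you describe.
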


\begin{Remark}
Corollary 2 shows that, since the class $\Omega_{GW}$ is the smallest, its elements only can have additional properties. In particular, according to Proposition \ref{prop: translativity only is GW}, the translation property is one that only the functionals in this class enjoy and that it is exploited in the context of risk measures in mathematical finance, see for example \cite{17}. We conclude that the class $\Omega_{GW} $ is, in this sense, more useful from both the theoretical and practical point of view. 
\end{Remark}

\section{A larger class of scalarizations}\label{s-5}

In this section, we further elaborate on the idea of generators of dual cones to extend the class of scalarizations of Drummond and Svaiter. We will describe a new (and larger) class of scalarizations that are not necessarily convex, but rather quasidifferentiable and positively homogeneous. The following assumption is used through the section:

\begin{Assumption}
Let $(Y,\|\cdot\|)$ be a normed space and $K\subset Y$ a closed, convex and pointed cone.
\end{Assumption}

Let $G,H$ be $w^*-$compact subsets of $Y^*.$ Then, we consider a scalarization functional $\Psi:Y\to \Bbb R$ defined by

\begin{equation}\label{eq: qdfunctional}
\Psi(y):= \sigma_G(y)-\sigma_H(y).
\end{equation} The functionals of the form \eqref{eq: qdfunctional} are quasidifferentiable at any point as a consequence of the quasidifferentiability of the involved support functions. In the following, we study necessary and sufficient geometrical conditions on $G$ and $H$ under which $\Psi$ satisfies the two main axioms of scalarizations: monotonicity and order representability. These conditions will motivate the definition of the new class of quasidifferentiable scalarization functionals. 

First we focus on monotonicity properties based on set relations between the faces of the subdifferential and the superdifferential respectively, see Definition 5. These set relations were introduced by Kuroiwa in \cite{14}  and later by Jahn et. al in \cite{15} in order to compare sets and are very important in set optimization, see \cite{17} and the references therein. For this reason, we start this section by defining them in our context:

\begin{Definition}(See \cite{14,15})
Let Assumption 2 be fulfilled and consider a subset $D$ of $Y^*,$ its dual. Then, $D$ induces binary relations on the subsets of $Y^*$ as follows: for $G,H\subseteq Y^*,$
\begin{itemize}
\setlength\itemsep{1em}
\renewcommand{\labelitemi}{\scriptsize$\blacksquare$} 
\item $H\preceq_l^D G$ ($H$ is lower less than $G$ w.r.t $D$) if $G\subseteq H+D,$
\item $H\preceq_u^D G$ ($H$ is upper less than $G$ w.r.t $D$) if $H\subseteq G-D,$
\item $H\preceq_s^D G$ ($H$ is set less than $G$ w.r.t $D$) if $H\preceq_l^D G$ and $H\preceq_u^D G$
\end{itemize} 
\end{Definition}

Our starting point is the following Lemma, that can be seen as a generalization of H\"ormander's Theorem, see Theorem 2.3.1 in \cite{13}.

\begin{Lemma}\label{jahnlemma}
Let Assumption 2 be fulfilled and let $G,H\subseteq Y^*$ be convex and  $w^*-$ compact. Then, 

\begin{enumerate}
\setlength\itemsep{1em}

\item The equivalence $$H\preceq_u^{K^*} G \Longleftrightarrow \sigma_G|_{K} \geq \sigma_H|_{K}$$ holds.

\item Assume that $\operatorname{int}(K)\neq \emptyset.$ Then,  

$$H\preceq_u^{K^*\setminus\{0\}} G \Longrightarrow \sigma_G|_{\operatorname{int}(K)} > \sigma_H|_{\operatorname{int}(K)}$$ holds. The converse is also true if $H\cap \mathcal{M}(G,K^*)=\emptyset.$ Here, $\mathcal{M}(G, K^*)$ denotes the set of minimal elements of $G$ with respect to the partial order in $Y^*$ induced by $K^*,$ i.e, $\bar{g}^* \in \mathcal{M}(G,K^*)$ if and only if

$$\left(\bar{g}^*-K^*\setminus\{0\}\right)\cap G= \emptyset.$$

\item Assume that $K^{s*}\neq \emptyset.$ Then,

$$H\preceq_u^{K^{s*}} G\Longrightarrow \sigma_G|_{K\setminus\{0\}}> \sigma_H|_{K\setminus\{0\}}.$$ The converse holds if $Y$ is reflexive and $\operatorname{int}(K^*)\neq \emptyset.$

\end{enumerate}
  Here, $f|_A$ denotes the restriction of the functional $f$ to the set $A.$

\end{Lemma}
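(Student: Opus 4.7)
My plan is to prove each of the three parts by combining the separation theorems from Theorem \ref{t - separationtheorem} with the $w^*$-compactness of $G$ and $H$ and the elementary identity, a consequence of the bipolar theorem for the closed convex cone $K$,
$$\sup_{k^* \in K^*} \langle k^*, y\rangle = \begin{cases}0 & \text{if } y \in -K,\\ +\infty & \text{otherwise}.\end{cases}$$
This identity will repeatedly be used to convert an unconstrained separation inequality into the condition that the separating vector lies in $K$, which then contradicts one of the support-function hypotheses.

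For part (1), the implication $(\Rightarrow)$ is immediate: writing each $h \in H$ as $g - k^*$ with $g \in G$, $k^* \in K^*$ and evaluating at $y \in K$ yields $\langle h, y\rangle \leq \sigma_G(y)$. For $(\Leftarrow)$ I would argue by contradiction and pick $h \in H \setminus (G - K^*)$. Since $G$ is $w^*$-compact and $K^*$ is $w^*$-closed, the set $G - K^*$ is convex and $w^*$-closed, so Theorem \ref{t - separationtheorem}(ii) applied in $Y^*$ with the $w^*$-topology (whose continuous dual is $Y$) produces $y \in Y$ strictly separating $\{h\}$ from $G - K^*$. The displayed identity forces $y \in K$, and the separation inequality reduces to $\sigma_G(y) < \langle h, y\rangle \leq \sigma_H(y)$, contradicting the hypothesis $\sigma_G|_K \geq \sigma_H|_K$.

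For part (2) $(\Rightarrow)$, I would use the $w^*$-compactness of $H$ to attain $\sigma_H(y) = \langle \bar{h}, y\rangle$ at some $\bar{h} \in H$, write $\bar{h} = g - k^*$ with $k^* \in K^* \setminus \{0\}$, and invoke the standard characterization $y \in \operatorname{int}(K)$ iff $\langle y^*, y\rangle > 0$ for every $y^* \in K^* \setminus \{0\}$; this gives $\langle k^*, y\rangle > 0$ and hence $\sigma_H(y) < \sigma_G(y)$. For the converse, continuity of the (sublinear) support functions extends the strict inequality on $\operatorname{int}(K)$ to $\sigma_G|_K \geq \sigma_H|_K$, so part (1) already yields $H \subseteq G - K^*$; for each $h \in H$, the representation $h = g - k^*$ with $k^* \in K^*$ is then upgraded to one with $k^* \neq 0$ via the hypothesis $H \cap \mathcal{M}(G, K^*) = \emptyset$, which rules out exactly the degenerate situation in which no strictly larger element of $G$ is available. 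Part (3) $(\Rightarrow)$ is completely analogous, using the defining property $\langle k^*, y\rangle > 0$ for $k^* \in K^{s*}$ and $y \in K \setminus \{0\}$.

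The main obstacle is the converse in part (3), where the interplay between the norm and $w^*$ topologies on $Y^*$ has to be handled carefully. The plan is to argue by contradiction: if $\bar{h} \in H$ does not lie in $G - K^{s*}$, then $(G - \bar{h}) \cap K^{s*} = \emptyset$, and since the pointedness of $K$ together with the Hahn--Banach theorem yields $\operatorname{int}(K^*) \subseteq K^{s*}$, also $(G - \bar{h}) \cap \operatorname{int}(K^*) = \emptyset$. I would then apply Theorem \ref{t - separationtheorem}(i) in the \emph{norm} topology on $Y^*$ with $A = K^*$ (whose norm-interior is nonempty by hypothesis) and $B = G - \bar{h}$. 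The resulting separating functional lies a priori in $Y^{**}$, and it is precisely here that the reflexivity of $Y$ is essential, as it allows that functional to be identified with some $\tilde{y} \in Y$. A case analysis parallel to the one in part (1) forces $\tilde{y} \in -K$, and setting $y := -\tilde{y} \in K \setminus \{0\}$ produces $\sigma_G(y) \leq \langle \bar{h}, y\rangle \leq \sigma_H(y)$, contradicting the strict inequality $\sigma_G|_{K\setminus\{0\}} > \sigma_H|_{K\setminus\{0\}}$.
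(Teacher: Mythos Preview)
Your proposal is correct and follows essentially the same route as the paper. For part (i) the paper simply cites \cite[Lemma 2.1]{27}, while you supply a direct separation argument; for parts (ii) and (iii) the forward implications and the converse in (ii) are argued exactly as in the paper. The only substantive difference is in the converse of (iii): the paper first invokes the external characterization $K^{s*}=\operatorname{int}(K^*)$ for reflexive $Y$ from \cite{28} and then separates, whereas you use only the elementary inclusion $\operatorname{int}(K^*)\subseteq K^{s*}$ and carry out the separation in the norm topology of $Y^*$, using reflexivity solely to identify the separating functional with an element of $Y$. Your variant is slightly more self-contained, but the underlying mechanism---separate, force the separating vector into $K$ via the cone identity, and contradict the support-function hypothesis---is the same in both proofs.
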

\begin{proof} 

We only proof (ii) and (iii) since (i) is a particular case of \cite[Lemma 2.1]{27}.\newline

(ii) Assume that $H\subseteq G-K^*\setminus\{0\}.$ Take $y\in \operatorname{int}(K)$ and $h^*\in H$ such that $\langle h^*,y\rangle =\sigma_H(y).$ Then, we can find $g^* \in G: g^* - h^* \in K^*\setminus\{0\}. $ It follows that

$$\sigma_G(y)\geq \langle g^*,y\rangle =\langle g^*-h^*,y\rangle +\langle h^*,y\rangle= \sigma_H(y)+ \langle g^*-h^*,y\rangle > \sigma_H(y) $$ since  $y\in \operatorname{int}(K)$ and hence $ \langle g^*-h^*,y\rangle >0.$ 

In order to prove the second part, assume now that $H\cap \mathcal{M}(G,K^*)=\emptyset.$ If $ \sigma_G|_{\operatorname{int}(K)} > \sigma_H|_{\operatorname{int}(K)},$ by the convexity of $K$ we have $\overline{\operatorname{int}(K)}=K$ and hence in the limit  

$$\sigma_G|_{K} \geq \sigma_H|_{K}.$$ By (i), it follows that $H\subseteq G- K^*.$ Assume that, on the contrary, $H \nsubseteq G- K^*\setminus\{0\}.$ Then, there exists an element $$h^* \in H\cap\left((G-K^*)\setminus (G-K^*\setminus\{0\})\right).$$ From this, we deduce that $h^*\in G\cap H,$ and that $h^* \notin y^* -K^*\setminus\{0\}$ for any $y^*\in G.$ But, by definition, this means that  $h^* \in H\cap \mathcal{M}(G,K^*),$ a contradiction.\newline

(iii)  To this end,  let $K^{*s}\neq \emptyset$ and assume that $H\subseteq G- K^{*s}.$ Take $y \in K\setminus\{0\}$ and $h^* \in H: \langle h^*,y\rangle=\sigma_H(y).$ Analogous to (ii), there exists $g^* \in G$ such that $k^*:= g^* -h^* \in K^{*s}.$ It follows that 
$$\sigma_H(y)= \langle h^*,y \rangle= \langle g^*, y\rangle - \langle k^*,y\rangle \leq \sigma_G(y)- \langle k^*,y\rangle < \sigma_G(y)$$ since $k^* \in K^{*s}.$ Hence the first implication must be true.

Assume now that  $Y$ is reflexive and $\operatorname{int}(K^*)\neq \emptyset.$ It is well known that we always have $\operatorname{int}(K^*)= \operatorname{int}(K^{*s}).$ Furthermore, the reflexivity imply that 

$$\operatorname{int}(K^*)= \operatorname{int}(K^{*s}) = K^{*s}.$$  In fact, this is a characterization of reflexive  spaces, as shown in \cite[Theorem 3.6]{28}.

If $$H\nsubseteq G- K^{*s}= G- \operatorname{int}(K^*),$$ then there must be an $h^*\in H, \; h^* \notin G-\operatorname{int}(K^*).$ By Theorem \ref{t - separationtheorem} (ii), we can find $y \in Y\setminus \{0\}$ such that for all $g^* \in G, k^* \in K^*,$ the inequality

$$\langle h^*,y \rangle \geq \langle g^*-k^*,y\rangle$$ holds. From this it is easy to deduce that $y \in K$ and that $\sigma_H(y)\geq \sigma_G(y),$ as desired. 
\end{proof}

For the forthcoming results, we need the notion of a $y-$ face of a set $A\subseteq Y^*, $ where $y\in Y.$ They are defined as

\begin{equation}\label{y-face}
A^y:=\{y^*\in A : \langle y^*,y \rangle =\sigma_A(y)\}.
\end{equation} By the definition of the set relations, we always have $H\preceq_s^D G \Longrightarrow H\preceq_l^D G $ and $H\preceq_s^D G \Longrightarrow H\preceq_u^D G.$ However, the converse implications are not necessarily true. The following lemma shows that they are equivalent in a specific context.

\begin{Lemma}
Let Assumption 2 be fulfilled and let $G,H\subseteq Y^*$ be convex and  $w^*-$ compact. Then, the following conditions are equivalent:

\begin{enumerate}\label{equivalenceofsetrelations}
\setlength\itemsep{1em}

\item $\forall\; y\in Y: \; H^y\preceq_s^{K^*} G^y, $
\item $\forall\; y\in Y: \; H^y\preceq_u^{K^*} G^y, $
\item $\forall\; y\in Y: \; H^y\preceq_l^{K^*} G^y. $
\end{enumerate}

\end{Lemma}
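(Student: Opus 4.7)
The implications $(1) \Rightarrow (2)$ and $(1) \Rightarrow (3)$ are immediate from $\preceq_s = \preceq_u \wedge \preceq_l$, so the real work is to establish $(2) \Leftrightarrow (3)$; together with either trivial direction this yields $(1)$. My plan is to recast both conditions via directional derivatives of the support functions. For any convex $w^*$-compact $A \subseteq Y^*$ one has $\partial \sigma_A(y) = A^y$ at every $y \in Y$, so Proposition \ref{properties of convex}(iii) gives the identity $\sigma_A'(y;z) = \sigma_{A^y}(z)$ for all $y,z$. Using Lemma \ref{jahnlemma}(i), condition $(2)$ then reads $(\sigma_G - \sigma_H)'(y;z) \geq 0$ for every $y \in Y$ and $z \in K$. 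An analogous computation (using $\sigma_{K^*} = 0$ on $-K$ and $+\infty$ elsewhere, so that $G^y \subseteq H^y + K^*$ is equivalent to $\sigma_{G^y}(z) \leq \sigma_{H^y}(z)$ for $z \in -K$) recasts $(3)$ as $(\sigma_G - \sigma_H)'(y;-z) \leq 0$ for every $y \in Y$ and $z \in K$.

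The core of the proof will be to show that both derivative conditions are equivalent to the continuous function $f := \sigma_G - \sigma_H$ being $K$-monotone on $Y$, i.e.\ $f(y+z) \geq f(y)$ for all $y \in Y$ and $z \in K$. The forward direction is routine: $K$-monotonicity yields $f(y+tz) \geq f(y) \geq f(y-tz)$ for $t > 0$, whence $f'(y;z) \geq 0$ and $f'(y;-z) \leq 0$. For the converse, fix $y_0 \in Y$ and $z_0 \in K$ and consider the continuous function $\phi \colon [0,1] \to \mathbb{R}$, $\phi(t) := f(y_0 + tz_0)$. Its right Dini derivative equals $f'(y_0 + tz_0; z_0)$ and its left Dini derivative equals $-f'(y_0 + tz_0; -z_0)$, so either derivative hypothesis makes one of these nonnegative at every point. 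The classical criterion that a continuous function on an interval whose right (respectively left) Dini derivative is nonnegative at every point is nondecreasing then yields $\phi(1) \geq \phi(0)$, i.e.\ $f(y_0 + z_0) \geq f(y_0)$.

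The main obstacle I expect is exactly the passage from pointwise derivative information to global monotonicity. It relies essentially on the continuity of $f$ (inherited from the sublinearity of $\sigma_G$ and $\sigma_H$) and on the one-dimensional Dini criterion; note that a direct, combinatorial argument on extreme points of $G$ and $H$ is not available in general, since in an infinite-dimensional setting neither set needs to be polyhedral. Once $K$-monotonicity of $\sigma_G - \sigma_H$ is recognized as the common content of $(2)$ and $(3)$, their equivalence becomes a symmetric consequence of the sign flip $K \leftrightarrow -K$ in the direction argument.
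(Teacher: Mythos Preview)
Your proposal is correct and takes a genuinely different route from the paper. The paper argues purely in the dual space via separation: it first establishes the auxiliary claim that $\forall\,y:\, H^y\preceq_u^{K^*} G^y$ forces the \emph{global} lower relation $H\preceq_l^{K^*} G$, by strongly separating an obstructing point $\bar{g}^*\in G\setminus(H+K^*)$ from $H+K^*$ to manufacture a direction $\bar{y}$ at which the face relation $(ii)$ fails; it then combines this claim with a second separation argument to obtain $(ii)\Rightarrow(iii)$, and closes with the symmetry $G\leftrightarrow H$, $K^*\leftrightarrow -K^*$. Your argument instead pulls everything back to the primal side: through $\sigma_A'(y;z)=\sigma_{A^y}(z)$ and Lemma~\ref{jahnlemma}(i) you recognise both $(ii)$ and $(iii)$ as sign conditions on the directional derivative of the continuous function $f=\sigma_G-\sigma_H$, and then show that each of these is equivalent to $K$-monotonicity of $f$ via the one-variable Dini monotonicity criterion along segments $t\mapsto f(y_0+tz_0)$. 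This is shorter and more conceptual --- it makes transparent that the common content of $(ii)$ and $(iii)$ is precisely the $K$-monotonicity of $\sigma_G-\sigma_H$, thereby anticipating (and essentially subsuming) the proof of Lemma~\ref{monotonicity}(i). The paper's geometric approach, by contrast, stays entirely within convex separation and produces as a byproduct the independently useful implication $(\forall\,y:\,H^y\preceq_u^{K^*}G^y)\Rightarrow H\preceq_l^{K^*}G$, without invoking any real-analysis facts about Dini derivatives.
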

\begin{proof}
Obviously, $(i)\Longrightarrow (ii)$ and $(i)\Longrightarrow (iii).$ Note that in order to show all of the equivalences, it suffices to show that $(ii)$ and $(iii)$ are equivalents. We now proceed to prove this assertion. The idea lies on the following claim:\\

\begin{itemize}
\setlength\itemsep{1em}
\renewcommand{\labelitemi}{\scriptsize$\blacksquare$} 
\item \underline{Claim:} $\forall\; y\in Y: H^y\preceq_u^{K^*} G^y \Longrightarrow H\preceq_l^{K^*} G.$

Indeed, assume otherwise. Then, there exists $\bar{g}^* \in G \setminus (H +K^*).$ Consider the sets 

$$S= G\cap (H+K^*),\; M= (\bar{g}^*- K^*)\cap G.$$ It is easy to see that  $M$ is $w^*-$ compact. Furthermore, the definition of $\bar{g}^*$ also implies that $M\cap (H+K^*)=\emptyset.$ We can now strongly separate the sets $M$ and $H+K^*$ and obtain an element $\bar{y} \in Y$ such that 
\begin{equation}\label{eq: kmonot1}
\sigma_G(\bar{y})\geq \inf_{g^* \in M}\langle g^*,\bar{y} \rangle > \sup_{y^* \in H+K^*}\langle y^*,\bar{y} \rangle\geq \sigma_S(\bar{y}).
\end{equation}

Now consider the set $G^{\bar{y}}.$ By \eqref{eq: kmonot1}, we must have $G^{\bar{y}}\subseteq G\setminus S$ and this is equivalent to  $G^{\bar{y}}\cap (H+K^*)=\emptyset.$ But this means in particular that $H^{\bar{y}}\nsubseteq G^{\bar{y}}-K^*,$ a contradiction. So, our claim is true. 
\end{itemize}

Assume now that $(ii)$ holds and that there exists $y\in Y$ such that $H^y\npreceq_l^{K^*} G^y,$ or equivalently, that  $G^y\nsubseteq H^y+K^*.$ Then, we can find $\bar{g}^* \in G^y\setminus (H^y+K^*).$ By Theorem \ref{t - separationtheorem} (ii), there exists $\bar{y}\in Y$ such that  

$$\langle \bar{g}^*,\bar{y}\rangle > \sup_{h^*\in H, k^*\in K^*}\langle h^*+k^*,\bar{y}\rangle.$$ In particular, this implies that $\langle k^*,\bar{y} \rangle \leq 0$ for every $k^*\in K^*,$ which, in virtue of Lemma 3.21 a) in \cite{9}, means that $\bar{y}\in -K.$ Since $\bar{g}^*\in G,$ our claim gives us the existence of $\bar{h}^*\in H$ and $\bar{k}^* \in K^*$ such that $\bar{g}^*= \bar{h}^*+\bar{k}^*.$ Then we will have 

$$\sigma_H(\bar{y})\geq \langle \bar{h}^*,\bar{y}\rangle \geq \langle \bar{h}^*,\bar{y}\rangle + \langle \bar{k}^*,\bar{y}\rangle = \langle \bar{g}^*,\bar{y}\rangle > \sup_{h^*\in H, k^*\in K^*}\langle h^*+k^*,\bar{y}\rangle \geq \sigma_H(\bar{y}),  $$ a contradiction. This proves that $(ii)\Longrightarrow (iii).$ By interchanging $G$ and $H$ and considering $(-K^*)$ instead of $K^*,$ a similar analysis proves that $(iii) \Longrightarrow (ii).$ The proof is complete. 
\end{proof}

The following result completely characterizes $K-$ monotone functionals of the form \eqref{eq: qdfunctional} with respect to the corresponding $y$-faces $G^y$ and $H^y$, respectively, given by (\ref{y-face}).

\begin{Lemma}\label{monotonicity}
Let Assumption 2 be fulfilled and let $G,H\subseteq Y^*$ be convex and  $w^*-$ compact. Consider the functional $\Psi:Y \to \Bbb R$ defined by \eqref{eq: qdfunctional}. Then,

\begin{enumerate}
\setlength\itemsep{1em}

\item The functional $\Psi$ is $K-$ monotone if and only if, for all $y\in Y$, the inequality 

$$H^y\preceq_s^{K^*} G^y$$  holds.

\item If $\operatorname{int}(K)\neq \emptyset$ and $H^y\preceq_u^{K^*\setminus\{0\}} G^y $ for all $y \in Y,$ the functional $\Psi$ is strictly $K-$monotone. 

\item If $K^{*s}\neq \emptyset$ and $H^y\preceq_u^{K^{s*}} G^y$ for all $y \in Y,$  the functional $\Psi$ is strongly $K-$monotone.

\end{enumerate}
\end{Lemma}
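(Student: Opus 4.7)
The plan is to reduce the monotonicity properties of $\Psi$ to sign conditions on the directional derivative $\Psi'(y, k)$ for $k$ in appropriate subsets of $K$, and then translate those sign conditions into set relations via Lemma \ref{jahnlemma} and Lemma \ref{equivalenceofsetrelations}.

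First I would invoke the classical formula for the directional derivative of a support function of a convex $w^*$-compact set, namely $\sigma_A'(y, k) = \sigma_{A^y}(k)$. Since $G^y$ and $H^y$ are convex and $w^*$-compact (as intersections of $G, H$ with the $w^*$-closed hyperplanes $\{y^* : \langle y^*, y\rangle = \sigma_G(y)\}$ and $\{y^* : \langle y^*, y\rangle = \sigma_H(y)\}$, respectively), this yields
\[
\Psi'(y, k) = \sigma_{G^y}(k) - \sigma_{H^y}(k) \quad \text{for all } y, k \in Y.
\]

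Next I would establish the principal reduction: $\Psi$ is $K$-monotone if and only if $\Psi'(y, k) \geq 0$ for every $y \in Y$ and $k \in K$. Necessity is immediate from $\Psi(y + tk) \geq \Psi(y)$ for $t \geq 0$ and the definition of the directional derivative. For sufficiency, fix $y \in Y$ and $k \in K$, and set $\varphi(t) := \Psi(y + tk)$ on $[0, 1]$; the chain rule yields $\varphi'_+(t_0) = \Psi'(y + t_0 k, k) \geq 0$ for every $t_0 \in [0, 1)$. Since $\varphi$ is continuous with nonnegative right derivative, it is nondecreasing, so $\Psi(y + k) = \varphi(1) \geq \varphi(0) = \Psi(y)$. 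Part (i) then follows at once: $\Psi$ is $K$-monotone iff $\sigma_{G^y}|_K \geq \sigma_{H^y}|_K$ for every $y$; by Lemma \ref{jahnlemma}(i) applied to the pair $(G^y, H^y)$ this is equivalent to $H^y \preceq_u^{K^*} G^y$ for every $y$; and Lemma \ref{equivalenceofsetrelations} converts this to the stated $H^y \preceq_s^{K^*} G^y$ condition.

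For parts (ii) and (iii), the hypothesis already implies $H^y \preceq_u^{K^*} G^y$ (since $K^{s*} \subseteq K^* \setminus \{0\} \subseteq K^*$), so part (i) gives that $\Psi$ is $K$-monotone. Moreover, applying Lemma \ref{jahnlemma}(ii) or (iii) to $(G^y, H^y)$ yields $\Psi'(y, k) > 0$ for every $y \in Y$ and every $k \in \operatorname{int}(K)$ (resp.\ $k \in K \setminus \{0\}$). For such a $k = z - y$, the function $\varphi(t) = \Psi(y + tk)$ is nondecreasing on $[0, 1]$ by $K$-monotonicity, and the strict inequality $\varphi'_+(0) = \Psi'(y, k) > 0$ supplies a $\delta \in (0, 1]$ with $\varphi(\delta) > \varphi(0)$; monotonicity then forces $\Psi(z) = \varphi(1) \geq \varphi(\delta) > \varphi(0) = \Psi(y)$.

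The main obstacle is the local-to-global passage in Step 2: one must correctly identify $\varphi'_+(t_0)$ with $\Psi'(y + t_0 k, k)$ for every $t_0$ and invoke the classical real-analysis fact that a continuous function with nonnegative right derivative on an interval is nondecreasing. Once that reduction is in place, the remainder of the proof is a mechanical application of the two preceding lemmas to the $y$-faces.
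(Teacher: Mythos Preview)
Your proposal is correct and follows essentially the same approach as the paper: both reduce monotonicity to the sign of the directional derivative $\Psi'(y,k)=\sigma_{G^y}(k)-\sigma_{H^y}(k)$ along rays $t\mapsto\Psi(y+tk)$, then invoke Lemma~\ref{jahnlemma} and Lemma~\ref{equivalenceofsetrelations} to translate those sign conditions into the stated set relations. The only difference is cosmetic: you make the local-to-global step (nonnegative right derivative at every point $\Rightarrow$ nondecreasing) explicit, whereas the paper compresses it into the observation that the condition holds for \emph{all} $y$, hence at every point $y+t_0 k$ along the ray.
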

\begin{proof}
(i) Suppose that $\Psi$ is $K-$ monotone. Note that this is true if and only if for every $y\in Y, \; z \in K,$ the function 

$$\gamma_{y,z}(t):= \Psi(y+tz)$$ attains its minimum over $\Bbb R_+$ at $\bar{t}=0.$ The classical first order necessary optimality condition now implies 
$$\forall\; t\geq 0: \;\gamma'_{y,z}(0,t) \geq 0.$$ It is easy to check that for $t\geq 0,$ we have

$$\gamma'_{y,z}(0,t)= \sigma_{G^y}(tz)-\sigma_{H^y}(tz)=t\left(\sigma_{G^y}(z)-\sigma_{H^y}(z)\right).$$ Hence, the optimality condition is satisfied if and only if 
$$\forall \; y\in Y, \; z \in K:\;\sigma_{G^y}(z)-\sigma_{H^y}(z)\geq 0.$$ Applying Lemma \ref{jahnlemma}(i), we obtain 

$$H^y\subseteq G^y- K^*$$ for every $y \in Y.$ The necessity follows then from Lemma \ref{equivalenceofsetrelations}.

In order to prove sufficiency, assume that $H^y\preceq_s^{K^*} G^y$ for every $y\in Y$ and note that under this condition we have $\gamma'_{y,z}(0,t)\geq 0$ for all $t\geq 0, \;y\in Y,\;z\in K.$ This means that $\gamma_{y,z}$ is increasing along any $z-$ ray, with $z\in K.$ Obviously this means that $0$ is a global minimum of $\gamma_{y,z}$ and hence the $K-$ monotonicity follows.\newline

(ii) Assume now that $\operatorname{int}(K)\neq \emptyset$ and $H^y\subseteq G^y- K^*\setminus\{0\}, $ for all $y \in Y.$ By Lemma \ref{jahnlemma} (ii), this implies 

$$\sigma_{G^y}|_{\operatorname{int}(K)} > \sigma_{H^y}|_{\operatorname{int}(K)}.$$ Hence, for any $y\in Y$, $z\in \operatorname{int}(K)$ and $t>0,$ we have $$\gamma_{y,z}'(0,t)>0.$$ But this would mean that $\bar{t}=0$ is a strict local minimum of the problem $$\min_{t\geq 0}\; \gamma_{y,z}(t).$$ Hence $\Psi$ is strictly monotone.

(iii) Assume in addition that $H^y \subseteq G^y - K^{*s}$ for each $y\in Y.$ By Lemma \ref{jahnlemma} (iii), this implies $$\sigma_{G^y}|_{K\setminus \{0\}} > \sigma_{H^y}|_{K\setminus \{0\}}.$$ Then, we have $\gamma_{y,z}'(0,t)>0$ for every $y\in Y, \;z\in K\setminus \{0\},\;t>0.$ Hence in this case $\bar{t}=0$ is a strict minimum of the problem $$\min_{t\geq 0}\; \gamma_{y,z}(t), $$ which gives us the strong monotonicity of $\Psi.$

\end{proof}
 
Now, we focus on the conditions that $G$ and $H$ must fulfill in order to guarantee the order representability axiom. To this aim, we will need the set valued map $P: Y\to 2^{Y^*}$ defined by

$$P(y):=\{y^*\in Y^*: \langle y^*,y\rangle \geq 0\}= \left(\operatorname*{cone}(\{y\})\right)^*.$$ Note that $$\operatorname{int}(P(y))= \left(\operatorname*{cone}(\{y\})\right)^{s*}\neq \emptyset,$$ given that $y\neq 0.$ With this definition, it is immediate how to find a geometrical condition on $G$ and $H$ that is equivalent to the order representability, as we will show in the following lemma.

\begin{Lemma}\label{representability}
Let Assumption 2 be fulfilled and let $G,H\subseteq Y^*$ be convex and  $w^*-$ compact. Consider the functional $\Psi:Y \to \Bbb R$ defined by \eqref{eq: qdfunctional}. Then,

\begin{enumerate}
\setlength\itemsep{1em}

\item The implication 

$$H\subseteq \bigcap_{y\notin -K}\left(G-\operatorname{int}(P(y))\right) \Longrightarrow\{y\in Y: \Psi(y)\leq 0\}\subseteq -K$$ holds. The converse is true if $Y$ is reflexive. 

\item If in addition $\Psi$ is $K-$ monotone, $K$ is solid and $G\cap H=\emptyset,$ 

$$H\subseteq \bigcap_{y\notin -K}\left(G-\operatorname{int}(P(y))\right) \Longrightarrow -\operatorname{int}(K)=\{y\in Y: \Psi(y)< 0\}.$$
\end{enumerate}

\end{Lemma}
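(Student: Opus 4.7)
For part (i), I plan to unfold the set-theoretic hypothesis into a pointwise statement about the support functions $\sigma_G$ and $\sigma_H$. Explicitly, $H \subseteq G - \operatorname{int}(P(y))$ for a fixed $y \notin -K$ is equivalent to saying that for every $h^* \in H$ there exists $g^* \in G$ with $\langle g^* - h^*, y\rangle > 0$. To deduce $\{\Psi \leq 0\} \subseteq -K$, I would fix $y \notin -K$, use the $w^*$-compactness of $H$ to pick some $h^* \in H^y := \{h \in H : \langle h, y\rangle = \sigma_H(y)\}$, and apply the hypothesis to obtain $g^* \in G$ with $\sigma_G(y) \geq \langle g^*, y\rangle > \sigma_H(y)$, whence $\Psi(y) > 0$. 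For the converse (under reflexivity), the argument reverses: $\Psi(y) > 0$ on $Y \setminus -K$ yields $\sigma_G(y) > \sigma_H(y) \geq \langle h^*, y\rangle$ for any $h^* \in H$, and choosing $g^* \in G^y$ then gives $\langle g^* - h^*, y\rangle > 0$, so $h^* \in G - \operatorname{int}(P(y))$.

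For part (ii), the first step is to combine part (i) with $K$-monotonicity: applying the latter at $z = 0$ (and using $\Psi(0) = 0$) yields $-K \subseteq \{\Psi \leq 0\}$, which together with part (i) gives $\{\Psi \leq 0\} = -K$. Since $\Psi$ is continuous, $\{\Psi < 0\}$ is open, hence $\{\Psi < 0\} \subseteq \operatorname{int}(-K) = -\operatorname{int}(K)$.

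The hard part is the reverse inclusion, which I would prove by contradiction. Suppose some $y \in -\operatorname{int}(K)$ satisfies $\Psi(y) = 0$. A whole neighborhood of $y$ lies in $-\operatorname{int}(K) \subseteq \{\Psi \leq 0\}$, so $y$ is a local maximum of $\Psi$, and the first-order optimality condition forces $\Psi'(y; v) \leq 0$ for every $v \in Y$. Exploiting the DC structure $\Psi = \sigma_G - \sigma_H$, the identification $\partial \sigma_G(y) = G^y$ and $\partial \sigma_H(y) = H^y$ for support functions of convex $w^*$-compact sets, together with Proposition \ref{properties of convex}(iii), the derivative becomes $\Psi'(y; v) = \sigma_{G^y}(v) - \sigma_{H^y}(v)$. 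Hence $\sigma_{G^y} \leq \sigma_{H^y}$ on $Y$, and standard separation for convex $w^*$-compact sets gives $G^y \subseteq H^y$. Since $G^y \subseteq G$ and $H^y \subseteq H$, the disjointness hypothesis $G \cap H = \emptyset$ forces $G^y \cap H^y = \emptyset$; combined with $G^y \subseteq H^y$, this yields $G^y = \emptyset$, contradicting the fact that $\sigma_G$ attains its maximum on the $w^*$-compact set $G$. Producing this contradiction by combining the first-order condition with the disjointness assumption is the crux of the whole argument.
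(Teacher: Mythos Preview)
Your proof is correct. For part~(i) it is essentially the paper's argument stripped to its bare essentials: the paper routes the same computation through Lemma~\ref{jahnlemma}(iii), reading the inclusion $H\subseteq G-\operatorname{int}(P(y))$ as the set relation $H\preceq_u^{(\operatorname{cone}\{y\})^{s*}}G$ and then translating back into $\sigma_G(y)>\sigma_H(y)$, whereas you do this translation directly.

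For part~(ii) you take a genuinely different route. The paper does not argue via a local maximum. Instead it combines the $K$-monotonicity hypothesis with Lemma~\ref{monotonicity}(i) to obtain $H^y\subseteq G^y-K^*$ for every $y$, upgrades this to $H^y\subseteq G^y-(K^*\setminus\{0\})$ using $G\cap H=\emptyset$, and then invokes Lemma~\ref{monotonicity}(ii) to deduce that $\Psi$ is \emph{strictly} $K$-monotone; the inclusion $-\operatorname{int}(K)\subseteq\{\Psi<0\}$ is then immediate from $\Psi(0)=0$. Your first-order/H\"ormander argument ($G^y\subseteq H^y$ forced by the local maximum, contradicting $G\cap H=\emptyset$) avoids these auxiliary lemmas entirely and is more self-contained, at the cost of not producing strict monotonicity as a byproduct---a property the paper records and reuses elsewhere.

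One further observation: your converse argument for part~(i) never actually uses reflexivity, since picking $g^*\in G^y$ requires only the $w^*$-compactness of $G$. The paper's need for reflexivity comes from invoking the converse of Lemma~\ref{jahnlemma}(iii), which for a general cone separates in $Y^{**}$; in the one-dimensional cone $\operatorname{cone}\{y\}$ this detour is unnecessary, and your direct argument in fact shows that the converse holds without the reflexivity assumption.
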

\begin{proof} (i)  We have

\begin{eqnarray*}
H\subseteq \bigcap_{y\notin -K}\left(G-\operatorname{int}(P(y))\right) & \Longleftrightarrow & \forall\; y\notin -K: H\preceq_u^{\operatorname{int}(P(y))} G\\
                             & \Longleftrightarrow &  \forall\; y\notin -K: H\preceq_u^{\left(\operatorname*{cone}(\{y\})\right)^{s*}} G\\ 
                             & \Longrightarrow & \forall\; y\notin -K:\; \sigma_G|_{\operatorname*{cone}(\{y\})\setminus\{0\}}>\sigma_H|_{\operatorname*{cone}(\{y\})\setminus\{0\}} \textrm{(Lemma \ref{jahnlemma} (iii))}\\
                             & \Longleftrightarrow & \forall\; y\notin -K:\; \sigma_G(y)> \sigma_H(y) \\       
                             & \Longleftrightarrow & \forall\; y\notin -K:\; \Psi(y)>0\\ 
                             & \Longleftrightarrow & \{y\in Y: \Psi(y)\leq 0\}\subseteq -K,                            
\end{eqnarray*} as we wanted.

Now, assume that $Y$ is reflexive. In order to prove the converse, it suffices to show that the converse of the one-way implication in the previous proof is true. But this is a consequence of the second part of Lemma \ref{jahnlemma} (iii) by noticing that $\operatorname{int}(P(y))\neq \emptyset$ for every $y\in Y.$ This finishes the proof of (i).

(ii) Because of the monotonicity assumption on $\Psi$ and Lemma \ref{monotonicity} (i), we must have in particular $H^y\subseteq G^y - K^*$ for every $y\in Y$. Moreover, because $G\cap H=\emptyset,$ we must actually have $H^y\subseteq G^y-K^*\setminus\{0\}$ for every $y\in Y,$ or equivalently, $$\forall \;y\in Y: H^y\preceq_u^{K^*\setminus\{0\}} G^y.$$ Applying now Lemma \ref{monotonicity} (ii), we obtain the strict monotonicity of $\Psi.$ Strict monotonicity now implies 

$$-\operatorname{int}(K)\subseteq \{y \in Y: \Psi(y)<0\}.$$ Of course, if $\Psi(y)<0,$ the continuity of $\Psi$ implies $y\in -\operatorname{int}(K).$ The proof is complete. 
\end{proof}

The previous lemmata motivates the following definition:
\begin{Definition}\label{d-scalpair}

Let Assumption 2 be fulfilled and let $G,H$ be convex and $w^*-$compact subsets of $Y^*.$ We say that the pair $[G,H]$ is a scalarization pair if:

\begin{enumerate}
\setlength\itemsep{1em}

\item $H^y \preceq_s^{K^*} G^y$ for every $y \in Y,$
\item $H\cap G=\emptyset,$
\item $H\subseteq \bigcap_{y\notin -K}(G- \operatorname{int}(P(y))).$ 
\end{enumerate} The class of all scalarization pairs is denoted by $\Bbb S(K).$ Furthermore, we define the class of quasidifferentiable and positively homogeneous scalarizing functionals as the set $$\Omega_{QD}:=\{\Psi:Y\to \Bbb R: \exists \;[G,H] \in \Bbb S(K) \textrm{ such that }\Psi=\sigma_G-\sigma_H \}.$$
\end{Definition}

Our next theorem is an immediate consequence of the previous Lemmata. It shows that $\Omega_{QD}$ is a class of functionals whose elements fulfills the monotonicity and representability conditions.

\begin{Theorem}\label{qdscalarizationproperties}
Let $\Psi \in \Omega_{QD}.$ Then, $\Psi$ is $K-$monotone and 

$$-K=\{y \in Y: \Psi(y)\leq 0\}.$$  If $\operatorname{int}(K)\neq \emptyset,$ then $\Psi$ is strictly $K-$monotone and 

$$-\operatorname{int}(K)=\{y \in Y: \Psi(y)<0\}.$$
\end{Theorem}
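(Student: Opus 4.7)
The plan is to derive every conclusion by unpacking the definition $\Psi = \sigma_G - \sigma_H$ with $[G,H] \in \Bbb S(K)$ and then invoking Lemmata \ref{monotonicity} and \ref{representability} in sequence. The three defining clauses of a scalarization pair were engineered precisely so as to match the hypotheses of those lemmata, so the argument is essentially an orchestration.

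First I would establish $K$-monotonicity: clause (i) of Definition \ref{d-scalpair} is exactly the hypothesis of Lemma \ref{monotonicity}(i), yielding $K$-monotonicity of $\Psi$ at once. Next, to prove $-K = \{y \in Y : \Psi(y) \leq 0\}$, I would split into two inclusions. The nontrivial one, $\{y : \Psi(y) \leq 0\} \subseteq -K$, follows directly from the forward implication in Lemma \ref{representability}(i), whose premise is clause (iii). For the reverse inclusion, observe that $\Psi(0) = \sigma_G(0) - \sigma_H(0) = 0$; hence for any $y \in -K$ we have $0 - y \in K$, and $K$-monotonicity forces $\Psi(y) \leq \Psi(0) = 0$.

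For the strict versions under $\operatorname{int}(K) \neq \emptyset$, I would first upgrade the face relation. Clause (i) already gives $H^y \subseteq G^y - K^*$ for every $y \in Y$, and I claim that clause (ii), namely $G \cap H = \emptyset$, promotes this to $H^y \subseteq G^y - (K^* \setminus \{0\})$. Indeed, for any $h^* \in H^y$ and any accompanying $g^* \in G^y$ with $g^* - h^* \in K^*$, equality $g^* = h^*$ would place the common point in $G \cap H$, contradicting disjointness. With the upgraded relation, Lemma \ref{monotonicity}(ii) yields strict $K$-monotonicity. Finally, $-\operatorname{int}(K) = \{y \in Y : \Psi(y) < 0\}$ follows from Lemma \ref{representability}(ii), whose four hypotheses are now all met: $\Psi$ is $K$-monotone, $K$ is solid by assumption, $G \cap H = \emptyset$ by (ii), and $H \subseteq \bigcap_{y \notin -K}(G - \operatorname{int}(P(y)))$ by (iii).

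I do not anticipate any genuine obstacle, since $\Bbb S(K)$ was designed so that each of its clauses plugs directly into one of the prior lemmata. The only substantive step of the proof itself is the disjointness-based upgrade from $K^*$ to $K^* \setminus \{0\}$ in the face inclusion, which is a one-line observation.
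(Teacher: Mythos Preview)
Your proposal is correct and follows essentially the same route as the paper: invoke Lemma \ref{monotonicity}(i) via clause (i) for $K$-monotonicity, Lemma \ref{representability}(i) via clause (iii) for the sublevel-set inclusion, and Lemma \ref{representability}(ii) (whose proof contains the very disjointness upgrade $H^y \subseteq G^y - (K^*\setminus\{0\})$ that you spell out) for the strict conclusions. If anything, you are more explicit than the paper, which omits the easy reverse inclusion $-K \subseteq \{y:\Psi(y)\leq 0\}$ and simply points to ``the proof of Lemma \ref{representability}(ii)'' for the second part.
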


\begin{proof} The monotonicity follows from Assumption (i) of the scalarization pair in Definition \ref{d-scalpair} and Lemma \ref{monotonicity} (i). By Lemma \ref{representability} (i) and the Assumption (iii) of the scalarization pair, we also obtain $$-K=\{y \in Y: \Psi(y)\leq 0\}.$$ The second part of the proof follows from the proof of Lemma \ref{representability} (ii). 
\end{proof}

Theorem \ref{ds and qd} confirms that $DS-$ functionals are contained in the class $\Omega_{QD}.$

\begin{Theorem}\label{ds and qd}
Let Assumption 1 be fulfilled and let $G,H$ be $w^*-$compact convex subsets of $Y^*.$ Then,
\begin{enumerate}

\setlength\itemsep{1em}

\item The set $G$ is a $w^*-$compact generator of $K^* \Longleftrightarrow [G,\{0\}]\in \Bbb S(K).$ In particular, $\Omega_{DS}\subseteq \Omega_{QD}.$

\item Assume that $[G,H] \in \Bbb S(K)$ and let the functional $\Psi$ be defined by (\ref{eq: qdfunctional}). Then, $$\Psi \in \Omega_{DS} \Longleftrightarrow H+ G\ominus H=G.$$ In particular, the set $G\ominus H$ is necessarily a generator of $K^*.$
\end{enumerate}

\end{Theorem}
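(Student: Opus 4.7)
My plan is to unwind the definition of a scalarization pair and match it with the defining properties of a generator. For part (i), since $\{0\}^y = \{0\}$ for every $y \in Y$, the set relation $\{0\} \preceq_s^{K^*} G^y$ splits into $G^y \subseteq K^*$ and $G^y \cap K^* \neq \emptyset$; condition (ii) becomes $0 \notin G$; and condition (iii) reads: for each $y \notin -K$ there exists $g^* \in G$ with $\langle g^*, y \rangle > 0$. Given a generator $G$ of $K^*$, the first two reduce to $G \subseteq K^*$ together with the nonemptiness of $G^y$ (by $w^*$-compactness of $G$), and the third follows by separating $y$ from $-K$ to produce $k^* \in K^*$ with $\langle k^*, y \rangle > 0$ and then rescaling using $K^* = \operatorname{cone}(G)$. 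For the converse, assuming $[G, \{0\}] \in \Bbb S(K)$, I would prove $G \subseteq K^*$ by contradiction: using the bipolar theorem to exhibit $y \in K$ with $\langle \bar{g}^*, y \rangle < 0$ for some $\bar{g}^* \in G \setminus K^*$ would force $G^{-y}$ to contain an element outside $K^*$, contradicting (i); then $K^* \subseteq \operatorname{cone}(G)$ follows by separating a hypothetical $k^* \in K^* \setminus \operatorname{cone}(G)$ from the $w^*$-closed convex cone $\operatorname{cone}(G)$ (closedness coming from Lemma \ref{wcomp}) and invoking (iii) to derive a contradiction. The inclusion $\Omega_{DS} \subseteq \Omega_{QD}$ is then immediate from $\sigma_G = \sigma_G - \sigma_{\{0\}}$.

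For part (ii), the forward direction is a cancellation argument. If $\Psi = \sigma_D$ for some $w^*$-compact generator $D$ of $K^*$, the identity $\sigma_G = \sigma_D + \sigma_H = \sigma_{D+H}$, combined with the fact that support functionals determine convex $w^*$-compact sets, yields $G = D + H$, and so $D \subseteq G \ominus H$ is automatic. For the reverse inclusion, applying support functionals to $(G \ominus H) + H \subseteq G = D + H$, cancelling the finite-valued $\sigma_H$, and using that both $G \ominus H$ and $D$ are convex and $w^*$-closed give $G \ominus H \subseteq D$. Hence $D = G \ominus H$ and $H + (G \ominus H) = G$.

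For the backward direction of (ii), I would set $D := G \ominus H = \bigcap_{h \in H}(G - h)$, which is automatically $w^*$-closed and convex; norm-boundedness (and hence $w^*$-compactness via Banach--Alaoglu in the Banach space $Y$) follows from $D + H \subseteq G$ and the boundedness of $G$. The hypothesis $H + D = G$ gives $\sigma_D = \sigma_G - \sigma_H = \Psi$, so $\Psi$ is continuous and sublinear. Because $[G, H] \in \Bbb S(K)$, Theorem \ref{qdscalarizationproperties} guarantees that $\Psi$ satisfies the monotonicity and representability axioms, and Theorem \ref{all is DS} applied to $\Psi = \sigma_D$ then identifies $D$ as a $w^*$-compact generator of $K^*$, so $\Psi \in \Omega_{DS}$. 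The ``in particular'' claim is a direct consequence of the forward direction of (ii).

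The main obstacle I expect is the reverse inclusion $G \ominus H \subseteq D$ in the forward direction of (ii): while $D \subseteq G \ominus H$ is a one-line consequence of $D + H = G$, the reverse requires careful use of the cancellation of $\sigma_H$ at the level of support functionals, which in turn relies on the $w^*$-compactness of $H$ (to guarantee $\sigma_H$ is finite and hence subtractable) and on the $w^*$-closedness of $G \ominus H$ (so that $\sigma_{G \ominus H} \leq \sigma_D$ translates back into set inclusion). Once this cancellation step is justified, both directions of (ii) proceed routinely.
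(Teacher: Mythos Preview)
Your proposal is correct, and largely parallels the paper's argument, but there is an interesting swap in where you deploy the heavy machinery of Theorems~\ref{all is DS} and~\ref{qdscalarizationproperties}.

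For part (i), the paper handles the reverse implication (\,$[G,\{0\}]\in\Bbb S(K)\Rightarrow G$ is a generator\,) by observing that $\Psi=\sigma_G$ then satisfies the monotonicity and representability axioms (Theorem~\ref{qdscalarizationproperties}) and invoking Theorem~\ref{all is DS}. You instead argue directly via separation: the bipolar trick to get $G\subseteq K^*$, and strong separation of a point from the $w^*$-closed cone $\operatorname{cone}(G)$ (closedness via Lemma~\ref{wcomp}, which is legitimate since condition (ii) of the scalarization pair gives $0\notin G$) to get $K^*\subseteq\operatorname{cone}(G)$. Both routes work; yours is more self-contained, while the paper's is shorter since the two theorems are already in hand.

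For part (ii), the situation is reversed. The paper's proof really only treats the forward direction: from $\Psi=\sigma_D$ it obtains $G=D+H$ by H\"ormander and asserts $G\ominus H=D$ ``by the definition,'' without spelling out the reverse inclusion $G\ominus H\subseteq D$ (which is precisely the cancellation step you identify and justify via support functionals). More importantly, the paper is silent on the backward direction: why does $H+G\ominus H=G$ alone force $G\ominus H$ to be a generator of $K^*$? Your argument fills this gap by noting that $\Psi=\sigma_{G\ominus H}$ is then sublinear and, since $[G,H]\in\Bbb S(K)$, satisfies the axioms by Theorem~\ref{qdscalarizationproperties}; Theorem~\ref{all is DS} then does the rest. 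So in part (ii) you are the one calling on Theorems~\ref{all is DS} and~\ref{qdscalarizationproperties}, and your treatment is actually more complete than the paper's.
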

\begin{proof} (i) Let us assume first that $G$ is a generator of $K^*.$ We now prove that $[G,\{0\}]\in \Bbb S(K).$ Indeed, by definition, we always have $0\notin G,$ or equivalently, $\{0\} \cap G=\emptyset.$ Furthermore, by Lemma \ref{equivalenceofsetrelations}, the condition $\{0\}\preceq_s^{K^*} G^y$ for every $y\in Y$ is equivalent to  $\{0\}\preceq_l^{K^*} G^y$ for every $y\in Y.$ This just means that $G^y\subseteq K^*$ for all $y,$ which is trivially satisfied by the definition of the generator. In order to finish this first part, it remains to show that 

$$0\in \bigcap_{y\notin -K}\left(G- \operatorname{int}(P(y))\right).$$ Assume otherwise. Then, we could find $y\notin -K$ such that $0\notin G-\operatorname{int}(P(y)).$ This is equivalent to $G\subseteq -P(y),$ so that we have 

$$\langle g^*,y\rangle \leq 0\textrm{ for all }g^*\in G.$$ Because $G$ is a generator of $K^*,$ we can apply Lemma 3.21 (a) in \cite{9} to obtain that $y\in -K,$ a contradiction. This proves the first implication.\\

Now, assume that $[G,\{0\}]\in \Bbb S(K).$ By Theorem \ref{qdscalarizationproperties}, the functional $\Psi:=\sigma_G-\sigma_{\{0\}}=\sigma_G$ satisfies both the monotonicity and the representability axiom. Hence, from the proof of Theorem \ref{all is DS}, we get that $G$ is in fact a generator of $K^*.$

(ii) We have $\Psi \in \Omega_{DS}$ if and only if we could find a generator $D$ of $K^*$ such that $\sigma_G - \sigma_H = \sigma_D.$ Adding $\sigma_H$ to both members we get 

$$\sigma_G= \sigma_D+\sigma_H= \sigma_{D+H}.$$ By H\"ormander's Theorem, we get $G= D+H.$ But then, by the definition of $G\ominus H,$ we must in fact have that $G\ominus H= D.$

\end{proof}

By now, we know that $\Omega_{DS}\subseteq \Omega_{QD}.$ However, it is not clear whether these classes are equivalent. We close this section by showing that this inclusion is actually strict under natural assumptions.

\begin{Theorem}\label{QDlargerclasss}
In addition to Assumption 1, suppose that $\operatorname{int}(K^*)\neq \emptyset.$ Then, $$\Omega_{QD}\setminus \Omega_{DS}\neq \emptyset.$$
\end{Theorem}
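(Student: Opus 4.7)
The plan is to construct an explicit scalarization pair $[G,H]\in\Bbb S(K)$ for which the associated $\Psi=\sigma_G-\sigma_H$ fails to be subadditive; since every $\Psi_G\in\Omega_{DS}$ is a support function and hence sublinear, this exhibits an element of $\Omega_{QD}\setminus\Omega_{DS}$. Using Assumption 1, fix $k\in\operatorname{int}(K)$ and set $G_0:=\{y^*\in K^*:\langle y^*,k\rangle=1\}$. I would first verify that $G_0$ is a $w^*$-compact convex base of $K^*$: convexity and $w^*$-closedness are immediate; norm-boundedness follows from $\Bbb B(k,\varepsilon)\subseteq K$ yielding $\|y^*\|_*\leq 1/\varepsilon$ on $G_0$, and Banach--Alaoglu together with Lemma 3.21 in \cite{9} give the base property. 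Then I would set
\[
G:=3G_0=G_0+2G_0,\qquad H:=\overline{\operatorname{conv}}^*(G_0\cup 2G_0),
\]
both of which are $w^*$-compact and convex. Applying $\sigma_{A+B}=\sigma_A+\sigma_B$, $\sigma_{\overline{\operatorname{conv}}^*(A\cup B)}=\max(\sigma_A,\sigma_B)$, and the elementary identity $a+b-\max(a,b)=\min(a,b)$, one obtains directly
\[
\Psi:=\sigma_G-\sigma_H=3\sigma_{G_0}-\max(\sigma_{G_0},2\sigma_{G_0})=\min(\sigma_{G_0},2\sigma_{G_0}).
\]

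Next I would check the three conditions of Definition \ref{d-scalpair}. Since $G_0\subseteq K^*$, the support function $\sigma_{G_0}$ is $K$-monotone, and so is the minimum $\Psi$; Lemma \ref{monotonicity}(i) then delivers $H^y\preceq_s^{K^*}G^y$ for every $y\in Y$. Disjointness $G\cap H=\emptyset$ is read off from $\langle\cdot,k\rangle$, which takes the value $3$ on $G$ and lies in $[1,2]$ on $H$. For the representability condition, I would argue directly (so that reflexivity is not needed): because $G_0$ generates $K^*$, one has $y\notin -K\iff \sigma_{G_0}(y)>0$, and in that case $\Psi(y)=\sigma_{G_0}(y)>0$, i.e.\ $\sigma_G(y)>\sigma_H(y)$. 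For any $h^*\in H$, the $w^*$-compactness of $G$ gives a maximizer $g^*\in G$ with $\langle g^*,y\rangle=\sigma_G(y)>\sigma_H(y)\geq\langle h^*,y\rangle$, so $g^*-h^*\in\operatorname{int}(P(y))$, whence $h^*\in G-\operatorname{int}(P(y))$. Thus $[G,H]\in\Bbb S(K)$ and $\Psi\in\Omega_{QD}$.

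To conclude $\Psi\notin\Omega_{DS}$, it suffices to show $\Psi$ is not subadditive. Taking $y:=k$ and $z:=-k$, the defining relation $\langle g^*,k\rangle=1$ on $G_0$ gives $\sigma_{G_0}(k)=1$ and $\sigma_{G_0}(-k)=-1$, so
\[
\Psi(k)=\min(1,2)=1,\quad \Psi(-k)=\min(-1,-2)=-2,\quad \Psi(k+(-k))=\Psi(0)=0,
\]
and $0=\Psi(y+z)>\Psi(y)+\Psi(z)=-1$ violates subadditivity. As every element of $\Omega_{DS}$ is sublinear, we conclude $\Psi\in\Omega_{QD}\setminus\Omega_{DS}$. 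I expect the only delicate step to be the verification of condition (iii) of Definition \ref{d-scalpair}, which in general is only implied by (not equivalent to) $\{y:\Psi(y)\leq 0\}\subseteq -K$ outside reflexive spaces; the placement of $G$ and $H$ on the parallel level sets $\langle\cdot,k\rangle=3$ and $\langle\cdot,k\rangle\in[1,2]$ is chosen precisely so that the condition can be checked by hand through the strict inequality $\sigma_G(y)>\sigma_H(y)$ on $Y\setminus -K$.
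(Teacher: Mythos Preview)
Your proof is correct and takes a genuinely different, considerably simpler route than the paper's own argument. The paper builds $G$ as an ``order interval'' $(B+K^*)\cap(p^*-K^*)$ in $K^*$, where the existence of the upper bound $p^*$ requires the hypothesis $\operatorname{int}(K^*)\neq\emptyset$; it then defines $\tilde H$ through a $w^*$-closed convex hull of faces $G^y$ over an auxiliary cone $C$, rescales to obtain $H$, and devotes a long Step~3 (five separate claims) to showing that $G\ominus H+H\neq G$ via Theorem~\ref{ds and qd}\,(ii). Your construction $G=3G_0$, $H=\overline{\operatorname{conv}}^*(G_0\cup 2G_0)$ yields the closed form $\Psi=\min(\sigma_{G_0},2\sigma_{G_0})$, from which $K$-monotonicity, disjointness, and the representability inclusion (iii) are all read off in a line or two, and non-membership in $\Omega_{DS}$ follows from the single computation $\Psi(k)+\Psi(-k)=-1<0=\Psi(0)$. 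Two points worth highlighting: first, your direct verification of (iii) via the strict inequality $\sigma_G(y)>\sigma_H(y)$ on $Y\setminus(-K)$ cleanly bypasses the reflexivity issue in Lemma~\ref{representability}\,(i); second, your argument never uses $\operatorname{int}(K^*)\neq\emptyset$, so it actually establishes the stronger statement that $\Omega_{QD}\setminus\Omega_{DS}\neq\emptyset$ holds under Assumption~1 alone.
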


\begin{proof}
First, let us note that, if $\operatorname{dim}(Y)=1,$ the result is trivial. Indeed, in this case, w.l.o.g we can assume that $Y=\Bbb R$ and $K=\Bbb R_+.$ Then, it is easy to see that the sets $G=[a,b]$ and $H=[c,d]$ form a scalarization pair iff $a>d.$ By choosing them so that $b-d < a-c,$ we ensure that $G\ominus H =\emptyset$ and hence, by Theorem \ref{ds and qd} $(ii),$ the associated functional to this sets will be nonconvex.\\
For the rest of the proof, we assume that $\operatorname{dim}(Y)>1.$ Here, the proof will be divided in several steps:\\
\underline{\underline {Step 1:}} Definition of suitable subsets $G$ and $H$ of $Y^*.$\\

Take any $r \in \operatorname{int}(K)$ and consider the basis 

$$B:=\{y^* \in K^*: \langle y^*,r\rangle=1\}$$ of $K^*.$ 

Take $v^* \in \operatorname{int}(K^*).$ Hence, we can find $\epsilon >0$ such that 

\begin{equation}\label{eq:ballcontaineddualcone}
\|y^*-v^*\|_* \leq \epsilon \Longrightarrow y^*\in K^*.
\end{equation}
 Since $B$ is $w^*-$compact and $Y$ is Banach, it must be norm- bounded and hence

$$M = \sup_{b^*\in B}\{\|b^*\|_*\}<+\infty.  $$ Consider now the point $p^*= \frac{M}{\epsilon}v^*.$ Then, for any $b^* \in B, $ we have 

$$\epsilon = \frac{\epsilon}{M} M\geq  \frac{\epsilon}{M}\|b^*\|_*=  \left\|v^*-\frac{\epsilon}{M}b^*-v^*\right\|_*. $$ By \eqref{eq:ballcontaineddualcone}, we now have $v^*-\frac{\epsilon}{M}b^* \in K^*,$ which is equivalent to $b^* \in p^* -K^*. $ Since $b^*$ was chosen arbitrarily in $B,$ it follows that the constructed point $p^*$ satisfies 

$$B\subseteq p^*-K^*.$$ Consider now the sets

$$G:= (B+K^*)\cap (p^*-K^*),\;\; C:= \{y \in Y: \langle p^*,y\rangle < \sigma_G(y)\}.$$ We claim that $-r\in C,$ and hence $C\neq \emptyset.$ Indeed, assume otherwise. If $G\neq \{p^*\},$ then we can find $g^* \in G\setminus\{p^*\}.$ Then, we have $p^*-g^* \in K*\setminus\{0\}$ and hence $\langle p^*- g^*, -r \rangle<0$ or equivalently, $$\langle p^*,-r \rangle < \langle g^*,-r \rangle \leq \sigma_G(-r),$$ a contradiction. It follows that $G=\{p^*\},$ and hence also $B=\{p^*\}.$ Now, since $\operatorname{int}(K^*)\neq \emptyset$ and $B$ is a basis of $K^*,$ we deduce that $\operatorname{dim}(Y^*)=1,$ and hence $\operatorname{dim}(Y)=1,$ again a contradiction. 

Let us define next $$\tilde{H}:= \overline{\operatorname{conv}}^*\left(\bigcup_{y\in C}G^y \cup B\right).$$ Furthermore, let 

$$\beta = \inf_{b^*\in B}\{\|b^*\|_*\},\; \;\gamma = \sup_{h^*\in \tilde{H}}\{\|h^*\|_*\}.$$ Finally, put 

$$H:= \frac{\beta}{2\gamma}\tilde{H}.$$ In Figure 2, a geometrical idea of our construction can be observed.\\

\begin{figure}\label{QD strict}
\centering

\caption{Idea of the construction in the proof of Theorem \ref{QDlargerclasss}}
\begin{tikzpicture}[scale=2.5]

% cone 
\path [fill= lightgray, dashed] (0,0)-- (90:2) arc (90:0:2)-- (0,0);
\draw[dashed, thick] (90:2) arc (90:0:2);
%\path [fill= lightgray, dashed] (0,0)-- (270:2) arc (180:270:2)-- (0,0);

% G
\path[thick, pattern=north west lines, pattern color=black!45] (1.2,0) -- (1.2,1.2)-- (0,1.2)-- (0, 0.8)--(0.8,0)-- (1.2,0);
\draw[thick, black] (1.2,0) -- (1.2,1.2)-- (0,1.2)-- (0, 0.8)--(0.8,0)-- (1.2,0);
\node [right] at (0.8,0.8) {$G$};
\node[circle, fill=black, scale=0.5] at (1.2,1.2){};
\node[right] at (1.2,1.2){$p^*$};

%Htilde
\path[thick, pattern=north east lines, pattern color=black!45] (1.2,0) -- (0,1.2)-- (0, 0.8)--(0.8,0)-- (1.2,0);
\draw[thick, black] (1.2,0) -- (1.2,1.2)-- (0,1.2)-- (0, 0.8)--(0.8,0)-- (1.2,0);
\draw[black] (1.2,0)--(0,1.2);
\node [ right] at (0.45,0.45) {$\tilde{H}$};

\draw[red, thick] (0.8,0) -- (0,0.8);
\node[red, below] at (0.8,0) {$B$};

% H
\path[thick, pattern=north west lines, pattern color=black!60] (0.5,0) -- (0,0.5)-- (0,0.2)-- (0.2, 0)--(0.5,0);
\path[thick, pattern=north east lines, pattern color=black!45] (0.5,0) -- (0,0.5)-- (0,0.2)-- (0.2, 0)--(0.5,0);
\draw[thick, black] (0.5,0) -- (0,0.5)-- (0,0.2)-- (0.2, 0)--(0.5,0);
\node [ right] at (0.1,0.1) {$H$};

% axes
\draw[thin,->] (-0.5,0) -- (2.2,0)node[anchor=north,text width=0.6cm]{} ;
\draw[thin,->] (0,-0.5) -- (0,2.2)node[anchor=north, text width=0.6cm]{};
\node [right] at (1.5,1.5) {$K^*$};

%\node [below] at (0,-0.5) {$X^*$};

\end{tikzpicture}
\end{figure}
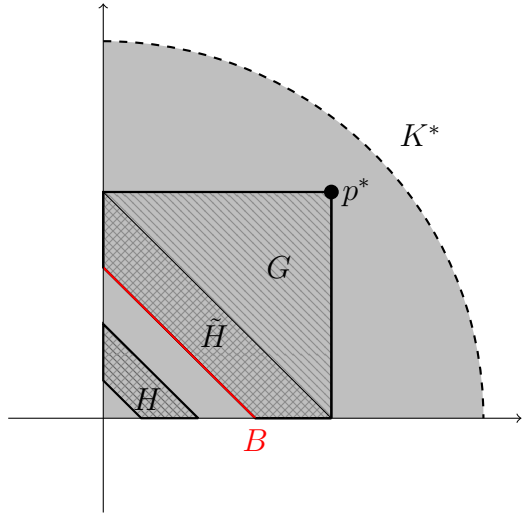

\underline{\underline {Step 2:}} Proving that $[G,H]$ is a scalarization pair.\\

\begin{itemize}
\setlength\itemsep{1em}
\renewcommand{\labelitemi}{\scriptsize$\blacksquare$} 

\item \underline{ $G$ and $H$ are convex and $w^*-$compact}

We have that $G$ is convex because it is the intersection of convex sets. By definition, $\tilde{H}$ is convex, so $H$ is convex too. Since $\tilde{H}$ is a $w^*-$closed subset of $G,$ in order to prove $w^*-$compactness of $H$ it suffices to prove the $w^*-$compactness of $G.$ Moreover, since $Y$ is complete, this is equivalent to show that $G$ is $w^*-$closed and $\|\cdot\|_*$- bounded. 

The $w^*-$closedness of $G$ is easy to see: $G$ is the intersection of two $w^*-$closed sets. In order to see that $G$ is $\|\cdot\|_*$- bounded, note that $\langle g^*,r\rangle \leq \langle p^*,r\rangle$ for every $g^*\in G.$ Now, since $B$ is a basis of $K^*,$ every member $g^* \in G$ can be written as $g^*=\lambda b^*,$ where $\lambda >0$ and $b^*\in B.$ But then, we have the relation $\langle \lambda b^*,r\rangle \leq \langle p^*,r\rangle,$ from which we find that $$\lambda \leq \frac{\langle p^*,r\rangle}{\langle b^*,r\rangle}= \langle p^*,r\rangle.$$ From this, it follows that 

$$\|g^*\|_*= \lambda\|b^*\|_*\leq  \langle p^*,r\rangle \|b^*\|_* \leq M\langle p^*,r\rangle<+\infty,$$ as we wanted.

\item \underline{ $G\cap H=\emptyset.$}

By construction, we have 

$$\forall\; h^* \in H: \|h^*\|_* \leq \frac{\beta}{2\gamma}\gamma=\frac{\beta}{2}< \beta,$$ and also

$$\forall \; g^* \in G: \|g^*\|_*\geq \beta.$$ In particular, this implies that $G\cap H=\emptyset.$

\item \underline{$H^y\preceq_s G^y $ for every $y\in Y.$}

From the proof of Lemma \ref{monotonicity}, we only need to show that 

$$H^y\subseteq G^y-K^*.$$  Before proceding, note that the definition of $\tilde{H}$ implies $$\forall\; y\in C: G^y \subseteq \tilde{H}\subseteq G.$$ Hence, it holds that 

\begin{equation}\label{eq:htildeface}
\forall\; y\in C: \tilde{H}^y =G^y.
\end{equation}

Now, if $y\in C,$ taking into account \eqref{eq:htildeface} and the fact that $\gamma \geq \beta,$ we get 

$$H^y= \frac{\beta}{2\gamma}\tilde{H}^y= \frac{\beta}{2\gamma}G^y \subseteq G^y-K^*.  $$

If on the other hand $y\notin C$ then, by definition, we have $p^*\in G^y.$ But in this case we will have 

$$H^y\subset H\subseteq p^* - K^* \subseteq G^y-K^*,$$ as desired.

\item \underline{$H\subseteq \bigcap_{y\notin -K}(G- \operatorname{int}(P(y))).$}

Assume otherwise. Then,

\begin{eqnarray*}
      \exists \; y\notin -K,\; h^*\in H &:  & \;h^*\notin G- \operatorname{int}(P(y))\\
      & \Longleftrightarrow   &\; h^*\notin G^y- \operatorname{int}(P(y))\\
      & \Longleftrightarrow   & \;\langle h^*,y\rangle \geq \sigma_G(y).\     
\end{eqnarray*}

Since $y\notin -K$ and $G\subseteq K^*,$ we have $\sigma_G(y)>0. $ Then, taking into account that $\frac{2\gamma}{\beta}>1$ and that $\frac{2\gamma}{\beta}h^* \in \tilde{H}\subseteq G,$ we get

$$0< \sigma_G(y)\leq \langle h^*,y\rangle < \langle \underbrace{\frac{2\gamma}{\beta}h^*}_{\in \tilde{H}\subseteq G},y\rangle \leq \sigma_G(y), $$ a contradiction.

\end{itemize}

This proves that $[Q,D]$ is a scalarization pair and hence, by Theorem \ref{qdscalarizationproperties}, it follows that $\Psi := \sigma_G- \sigma_H \in \Omega_{QD}.$\\  
\underline{\underline {Step 3:}} Proving that $\Psi$ is nonconvex\\

Assume that $\Psi$ is convex. By Theorem \ref{ds and qd} (ii), this happens if, and only if, $G \ominus H + H=G.$ In particular, since $p^* \in G,$ it follows that there exists $\bar{y}^*\in Y^*$ such that 

$$p^*\in \bar{y}^* +H,\; \bar{y}^*+H\subseteq G.$$ Let $u^*=p^*-\bar{y}^* \in H .$ In order to arrive at a contradiction, we use the following claims:

\begin{itemize}
\setlength\itemsep{1em}
\renewcommand{\labelitemi}{\scriptsize$\blacksquare$} 
\item \underline{Claim 1:} The inclusion $H\subseteq u^*-K^*$ holds.

Indeeed, take any $h^*\in H.$ Then, by hypothesis, we have $\bar{y}^*+h^* \in G\subseteq p^*- K^*.$ Hence, we can find $k^* \in K^*$ such that $\bar{y}^*+h^*=p^*-k^*.$ But this implies $h^*=p^*-\bar{y}^*-k^*=u^*-k^*\in u^*-K^*.$ Since $h^*$ was chosen arbitrarily in $H,$ this justifies the claim.

\item \underline{Claim 2:} $u^*\notin B'=\frac{\beta}{2\gamma}B.$

Otherwise, note that $B'\subseteq H\subseteq u^*-K^*$ by Claim 1. Take any $b^*\in B'.$ Then, we have $\langle b^*,r\rangle = \langle u^*,r\rangle$ and $u^*-b^*\in K^*.$ But then $$\langle u^*,r\rangle = \langle b^*,r\rangle +\langle u^*-b^*,r\rangle\geq \langle b^*,r\rangle$$ and, since $r\in \operatorname{int}(K),$ the equality holds iff $u^*=b^*.$ Hence $B'=\{u^*\},$ which implies that $\operatorname{dim(K^*)}=1.$ Since $\operatorname{\operatorname{int}(K^*)}\neq \emptyset,$ this in particular means that $\operatorname{dim}(Y^*)=1,$ and hence $\operatorname{dim}(Y)=1,$ a contradiction.

\item \underline{Claim 3:} $u^*\in \operatorname{int}(K^*).$

Assume otherwise. Then, we could apply Theorem \ref{t - separationtheorem} (i) to obtain a functional $y^{**} \in Y^{**}\setminus\{0\}$ such that 

$$\forall \; k^*\in K^*:\langle y^{**},u^* \rangle \leq \langle y^{**}, k^* \rangle.$$ From this we deduce  that $y^{**} \in K^{**}$ and that $\langle y^{**},u^* \rangle\leq 0.$ This, together with Claim 1, the fact that $B'$ is a basis of $K^*$ and that $B'\subseteq H,$ gives us 

$$\forall\; b^*\in B': \langle y^{**},b^* \rangle\leq \langle y^{**},u^*\rangle\leq 0.$$ On the other hand, since $B'$ is in particular a generator of $K^*,$ this implies $\langle y^{**}, k^* \rangle=0$ for any $k^*\in K^*.$ Since $\operatorname{int}(K^*)\neq \emptyset,$ this would imply that $y^{**}=0,$ a contradiction.

\item \underline{Claim 4:} $\left(\bigcup\limits_{y\in C}G^y \cup B\right) \subseteq B\cup \operatorname{bd}(K^*).$

Assume otherwise. Then, we can find $y\in C$ and $y^*\in G^y$ such that $y^*\notin B\cup \operatorname{bd}(K^*).$ Since $G^y\subseteq K^*,$ we must have $y^*\in \operatorname{int}(K^*).$ Since $y\in C,$ we get $\langle y^*,y\rangle > \langle p^*,y\rangle.$ By the definition of $G,$ we have that $k^*:= p^*-y^*\in K^*.$ So, $\langle k^*,y\rangle <0.$ But then, taking into account the fact that $y^*\notin B,$ we get that $y^*-tk^* \in y^*-K^*\subseteq p^*-K^*$ and $y^*-tk^* \in B+K^*$ for $t>0$ small enough. By definition of $G,$ this means that $y^*-tk^* \in G$ for $t>0$ small enough. But, then, we would get 

$$\langle y^*-tk^*,y\rangle =\langle y^*,y\rangle- t\langle k^*,y\rangle > \langle y^*,y\rangle,$$ a contradiction to the fact that $y^*\in G^y.$ The claim is true.

\item \underline{Claim 5:} $\exists\;\alpha>0$ such that $\langle k^*,r\rangle \leq \langle u^*,r \rangle-\alpha$ for every $k^* \in B'\cup \left[(u^*-K^*)\cap \operatorname{bd}(K^*)\right].$

Indeed, because of Claim 2 and 3, we have $u^*\notin B'$ and $u^*\in \operatorname{int}(K^*),$  such that we can find $\delta >0$ such that $\Bbb B(u^*, \delta) \subseteq \operatorname{int}(K^*)$ and $\Bbb B(u^*, \delta)\cap B'=\emptyset.$ The completeness of $Y$ gives us that, for any $\alpha >0,$ the sets

$$B_\alpha= \{y^*\in K^*: \langle y^*,r\rangle=\alpha\}$$ are norm bounded. This implies the existence of $\alpha>0$ such that $B_\alpha\subseteq \Bbb B(0,\delta).$ From Claims 1 and 2 we get that $\frac{\beta}{2\gamma}< \langle u^*,r\rangle$ and, hence, we can choose $\alpha$ small enough such that 

\begin{equation}\label{eq:alphasmall}
\frac{\beta}{2\gamma}\leq \langle u^*,r\rangle-\alpha.
\end{equation} In particular, this means that $\langle k^*,r\rangle \leq \langle u^*,r\rangle-\alpha$ for any $k^*\in B'.$ Then have 
\begin{equation}\label{eq: inclusionintK*}
\{y^* \in u^*-K^* : \langle y^*,r \rangle= \langle u^*,r \rangle -\alpha\}=u^*+B_\alpha \subseteq \Bbb B(u^*, \delta)\subseteq \operatorname{int}(K^*).
\end{equation} Now take any $k^* \in (u^*-K^*)\cap \operatorname{bd}(K^*).$ We have the existence of $\lambda \geq 0$ and $b_\alpha^* \in B_\alpha$ such that $k^*= u^*+\lambda b_\alpha^*.$ In fact, because of \eqref{eq: inclusionintK*}, we must have $\lambda>1.$  Hence, 

$$\langle k^*,r\rangle = \langle u^*,r \rangle +\lambda\langle b_\alpha^*,r\rangle\leq \langle u^*,r \rangle-\alpha,$$ as desired.  
\end{itemize}

Finally, choose any $y\in C.$ By Claim 4, we must have $G^y\subseteq B\cup\operatorname{bd}(K^*).$ Hence we get $\frac{\beta}{2\gamma}G^y \subseteq \left[ B'\cup \operatorname{bd}(K^*)\right]\cap H. $ Because of Claim 1, this implies

\begin{equation}\label{eq:finalinclussion}
\frac{\beta}{2\gamma}G^y \subseteq \left[ B'\cup \operatorname{bd}(K^*)\right]\cap H \subseteq  \left[ B'\cup \operatorname{bd}(K^*)\right]\cap (u^*-K^*)\subseteq B'\cup \left[(u^*-K^*)\cap \operatorname{bd}(K^*)\right].
\end{equation} Now, taking $\alpha$ as in Claim 5, we get $\langle h^*,r\rangle\leq \langle u^*,r \rangle-\alpha $ for any $h^* \in \frac{\beta}{2\gamma}G^y\cup B'.$ Since, by definition, $H=\overline{\operatorname{conv}}^*\left(\bigcup_{y\in C}\frac{\beta}{2\gamma}\left(G^y \cup B\right)\right),$ we must have

$$\forall\; h^*\in H: \langle h^*,r\rangle\leq \langle u^*,r \rangle-\alpha.$$ But this is a contradiction to the fact that $u^*\in H.$ Therefore, the functional $\Psi$ must be nonconvex.

\end{proof}

\section{Conclusions}\label{s-6}

We now briefly summarize the results obtained in this paper:

\begin{itemize}
\setlength\itemsep{1em}
\renewcommand{\labelitemi}{\scriptsize$\blacksquare$} 

\item We found an exact representation for the subdifferential of Hiriart-Urruty functionals. To the best of our knowledge, this representation is new in the infinite dimensional context. Other representations and approximations have been given in the literature, for example in \cite[Proposition 21.11]{Ha10-2}, \cite[Proposition 5]{3} and \cite[Theorem 3]{20}.

\item We provided several relationships in the sense of inclusion between three mayor classes of scalarizations known today, namely that of Gerstewitz ($\Omega_{GW}$), that of Hiriart-Urruty($\Omega_{HU}$) and that of Drummond-Svaiter($\Omega_{DS}$). Our results show that, under natural assumptions, $\Omega_{GW}\subseteq \Omega_{HU} \subseteq \Omega_{DS}.$ Furthermore, we showed that $\Omega_{DS}$ is exactly the set of sublinear scalarizing functionals that satisfy the required axioms to be useful in vector optimization: monotonicity and order representability.

\item We introduced a new class of scalarizing functionals that are not necessarily convex, but instead quasi\-differentiable and positively homogeneous. In order to achieve this, we found geometrical conditions on the quasidifferential of a functional in order to guarantee the fulfillment of the monotonicity and representability axioms. Furthermore, we proved that this class is strictly larger than $\Omega_{DS}$ if the space $Y$ is complete and the dual cone has nonempty interior.
\end{itemize}

The obtained results open new ideas for further research in set optimization, since important connections between set relations and the monotonicity of functionals of the form \eqref{eq: qdfunctional} were shown. The results from Section \ref{s-5} could also be applied in the study of the following problem:

\begin{center}
($\mathcal{P}$)  Consider a functional $f:Y\to \Bbb R$. Then, if it exists, find the largest convex cone $K$ for which $f$ is $K-$monotone. 
\end{center} The solution of this problem could be useful in the development of a benchmark of optimization problems. Indeed, using our ideas automatically generated monotone models can be obtained. It is also of interest to extend the relationships that we have shown in our paper to classes of nonlinear scalarizing functionals related to set valued optimization and problems with variable domination structures.

% Non-BibTeX users please use

\end{document}